\documentclass[11pt]{article}
\usepackage{cmap}
\usepackage[T1]{fontenc}
\usepackage{lmodern}
\usepackage{amsmath,amssymb,amsthm}
\usepackage{array}
\usepackage{booktabs}
\usepackage{graphicx}
\usepackage[margin=1.15in]{geometry}
\usepackage{microtype}
\usepackage[section]{placeins}
\usepackage[font=small,labelfont=bf,labelsep=period]{caption}
\usepackage[dvipsnames]{xcolor}
\usepackage[
  backend=biber,
  style=alphabetic,
  sorting=nyt,
  maxbibnames=99,
  maxalphanames=4,
  natbib=true
]{biblatex}
\appto{\bibsetup}{\emergencystretch=1em}
\usepackage{aliascnt}
\usepackage{hyperref}
\usepackage[nameinlink,noabbrev,capitalise]{cleveref}
\graphicspath{{figures/}}
\numberwithin{equation}{section}

\newtheorem{theorem}{Theorem}[section]
\newaliascnt{proposition}{theorem}
\newtheorem{proposition}[proposition]{Proposition}
\aliascntresetthe{proposition}
\newaliascnt{lemma}{theorem}
\newtheorem{lemma}[lemma]{Lemma}
\aliascntresetthe{lemma}
\newaliascnt{corollary}{theorem}
\newtheorem{corollary}[corollary]{Corollary}
\aliascntresetthe{corollary}
\newaliascnt{conjecture}{theorem}
\newtheorem{conjecture}[conjecture]{Conjecture}
\aliascntresetthe{conjecture}
\newtheorem{hlconjecture}{Conjecture}
\theoremstyle{remark}
\newaliascnt{remark}{theorem}
\newtheorem{remark}[remark]{Remark}
\aliascntresetthe{remark}
\crefname{theorem}{\mbox{Theorem}}{\mbox{Theorems}}
\Crefname{theorem}{\mbox{Theorem}}{\mbox{Theorems}}
\crefname{proposition}{\mbox{Proposition}}{\mbox{Propositions}}
\Crefname{proposition}{\mbox{Proposition}}{\mbox{Propositions}}
\crefname{lemma}{\mbox{Lemma}}{\mbox{Lemmas}}
\Crefname{lemma}{\mbox{Lemma}}{\mbox{Lemmas}}
\crefname{corollary}{\mbox{Corollary}}{\mbox{Corollaries}}
\Crefname{corollary}{\mbox{Corollary}}{\mbox{Corollaries}}
\crefname{conjecture}{\mbox{Conjecture}}{\mbox{Conjectures}}
\Crefname{conjecture}{\mbox{Conjecture}}{\mbox{Conjectures}}
\crefname{hlconjecture}{\mbox{Conjecture}}{\mbox{Conjectures}}
\Crefname{hlconjecture}{\mbox{Conjecture}}{\mbox{Conjectures}}
\crefname{remark}{\mbox{Remark}}{\mbox{Remarks}}
\Crefname{remark}{\mbox{Remark}}{\mbox{Remarks}}
\crefname{section}{\mbox{Section}}{\mbox{Sections}}
\Crefname{section}{\mbox{Section}}{\mbox{Sections}}
\crefname{appendix}{\mbox{Appendix}}{\mbox{Appendices}}
\Crefname{appendix}{\mbox{Appendix}}{\mbox{Appendices}}
\crefname{figure}{\mbox{Figure}}{\mbox{Figures}}
\Crefname{figure}{\mbox{Figure}}{\mbox{Figures}}
\crefname{table}{\mbox{Table}}{\mbox{Tables}}
\Crefname{table}{\mbox{Table}}{\mbox{Tables}}

\newcommand{\E}{\mathbb{E}}
\newcommand{\Prob}{\mathbb{P}}
\newcommand{\orth}{\bar\rho}
\newcommand{\Rb}{R_\partial}
\DeclareMathOperator{\BS}{BS}

\title{The Right Wing of the SABR Smile with an Absorbing Boundary:\\
Resolution of Conjectures of Henry-Labord\`ere}
\author{%
Zeyu Cao\thanks{Independent Researcher, Long Island City, NY 11101, USA.
Email: \texttt{josephcao891011@gmail.com}.}
\and
Shaosai Huang\thanks{Kspectra Research Inc., Toronto, ON, M2N 0G3, Canada.
Email: \texttt{arthur.foxie.huang@kspectra.ai}.}}
\date{October 2, 2026}
\hypersetup{
  colorlinks=true,
  linkcolor=MidnightBlue,
  citecolor=OliveGreen,
  urlcolor=BrickRed,
  pdftitle={The Right Wing of the SABR Smile with an Absorbing Boundary},
  pdfauthor={Zeyu Cao and Shaosai Huang},
  pdfsubject={Right-tail asymptotics of the absorbed SABR model and Henry-Labord\`ere's conjecture},
  pdfkeywords={SABR model, absorbed diffusions, implied-volatility wings,
    logarithmic tail asymptotics, intrinsic Riemannian distance},
  pdflang={en-US}
}

\begin{document}
\maketitle

\begin{abstract}
We determine the leading logarithmic asymptotics of the fixed-maturity right tail of
the SABR model with $\beta\in(0,1)$ and absorption at zero, for every correlation
$\rho\in(-1,1)$. If $P(k)$ is the probability
that the terminal forward is at least $f_0e^k$, then
\begin{equation*}
-\frac{\ln P(k)}{k^2}\ \longrightarrow\ \frac{(1-\beta)^2}{2\nu^2T\bigl(1-(\rho\wedge0)^2\bigr)}
\qquad(k\to\infty).
\end{equation*}
For $\rho\ge0$ this is the rate predicted by the unrestricted hyperbolic geometry. For
$\rho<0$ it is strictly larger: along the volatility excursion that produces the tail,
survival forces the noise orthogonal to volatility to stay above a moving square-root
barrier, at an additional cost of the same order $k^2$. Consequently the Black--Scholes
implied volatility tends to $\nu\sqrt{1-(\rho\wedge0)^2}/(1-\beta)$: Henry-Labord\`ere's
conjectured wing limit $\nu/(1-\beta)$ holds for $\rho\ge0$ and fails for every
$\rho<0$. For his second conjecture, whose unrestricted-distance Gaussian estimate does
not hold for $\rho<0$, we also identify the correct replacement for this model: at the
level of tail probabilities, the rate is given by the squared intrinsic Riemannian
distance, within the survival domain, from the initial state to the terminal target set.
The proofs are direct, combining a stopped Lamperti transform, a conditional time change,
Gaussian moving-barrier bounds and change-of-measure constructions, and use no
heat-kernel estimates.
\end{abstract}

\medskip
\noindent\textbf{Keywords.}
Absorbed diffusions; SABR model; implied-volatility wings; logarithmic tail asymptotics;
moving-boundary crossing; intrinsic Riemannian distance; stochastic volatility.

\smallskip
\noindent\textbf{2020 Mathematics Subject Classification.}
Primary 60J60; secondary 60F10, 60H10, 60J50, 91G20.

\smallskip
\noindent\textbf{JEL Classification.}
C02, G13.

\section{Introduction}\label{sec:intro}

\noindent Absorption can alter a fixed-time logarithmic tail asymptotic even when the
absorbing boundary is disjoint from the terminal target set. We establish this phenomenon
for the SABR diffusion absorbed at zero: the volatility excursion that produces a large
terminal value can simultaneously drive the forward toward the boundary, so paths
contributing to the right tail incur an additional Brownian boundary-crossing cost. The
main theorem concerns the terminal right tail of the absorbed diffusion; the
implied-volatility wing then follows from a model-free transfer principle.

\subsection{The Problem and Henry-Labord\`ere's Conjectures}\label{sec:problem}
The SABR model of Hagan, Kumar, Lesniewski and Woodward \cite{HKLW} is a widely used
stochastic-volatility parametrization of implied-volatility smiles in interest-rate
markets and beyond:
\begin{equation}\label{eq:sabr}
dF_t = a_t F_t^{\beta}\, dW_t, \qquad da_t = \nu a_t\, dZ_t, \qquad d\langle W,Z\rangle_t=\rho\,dt,
\end{equation}
with $F_0=f_0>0$, $a_0=\alpha>0$, $\beta\in(0,1)$, $\nu>0$ and $\rho\in(-1,1)$.
Precisely, we work on a complete filtered probability space satisfying the usual
conditions and carrying independent standard Brownian motions $(Z,Z^\perp)$. We set
$W=\rho Z+\sqrt{1-\rho^2}\,Z^\perp$ and
\[
a_t=\alpha\exp\!\left(\nu Z_t-\tfrac12\nu^2t\right).
\]
On the open state space $(0,\infty)$, $F$ is the pathwise-unique strong solution of the
first equation in \eqref{eq:sabr} up to the absorption time
\[
\zeta:=\inf\{t\ge0:F_t=0\}\qquad(\inf\varnothing=\infty),
\]
and we take the continuous absorbed extension $F_t=0$ for $t\ge\zeta$. This boundary
condition is part of the model: it selects the absorbing continuation among the possible
post-hitting extensions of the CEV equation. Throughout, $T>0$ is a \emph{fixed}
maturity, $k=\ln(K/f_0)$ is the log-moneyness, and $\sigma_{\BS}(k)$ denotes the
Black--Scholes implied volatility of the undiscounted call with strike $K$ and maturity
$T$.

At fixed maturity, the extreme right wing probes joint rare excursions of volatility and
the forward. The unrestricted hyperbolic geometry of the model suggests a
correlation-independent leading coefficient, whereas the right-tail event for the
absorbed process requires survival in $(0,\infty)$. The question is whether this survival
constraint is negligible on the leading $k^2$ scale.

In Chapter~10 of his monograph \cite{HL}, Henry-Labord\`ere writes the generator of a
stochastic-volatility model as a Schr\"odinger-type operator on a Riemannian manifold: a
Laplace--Beltrami part determined by the diffusion matrix, a connection one-form, and a
scalar potential~$Q$. Assuming a Gaussian bound of Li--Yau--Davies type
\cite{LiYau,Davies} for the associated heat kernel (his (10.21)), he obtains a wing
formula, his (10.22), in which the extreme strikes are governed by the geodesic distance,
in the \emph{unrestricted} metric, from the initial point to the terminal fibre
$\{F=f\}$. For SABR this metric is hyperbolic, and the resulting limit is
$\nu/(1-\beta)$. Let $a_{\min}(f)$ denote the volatility coordinate of the point of the
fibre $\{F=f\}$ nearest to the initial point, that is, the saddle point of the fibre
integral. For SABR with $\beta<1$, Henry-Labord\`ere computes \cite[p.~306]{HL} that the
scalar potential at this saddle point is the \emph{constant}
\begin{equation}\label{eq:Q}
Q(a_{\min}(f)) \;=\; \frac{\nu^2}{4(1-\beta)^2}\Bigl(\beta(\beta-1)-\frac{\beta^2}{2(1-\rho^2)}\Bigr),
\end{equation}
and, on the strength of its boundedness, poses two conjectures, which we restate in our
notation, keeping his numbering.\footnote{Both appear on p.~306 of \cite{HL}. \cref{conj:HL1} is stated there
without restriction on $\rho$ and is immediately followed by the attribution of the case
$\rho=0$ to \cite{BFL} (the book's reference~[55]), which fixes its intended generality.}

\begin{hlconjecture}[{\cite[p.~306]{HL}}]\label{conj:HL1}
For SABR with $\beta<1$, $\sigma_{\BS}(k)\to \nu/(1-\beta)$ as $k\to\infty$, for every
$\rho\in(-1,1)$.
\end{hlconjecture}

The second conjecture concerns the Gaussian estimate itself.

\begin{hlconjecture}[{\cite[p.~306]{HL}}]\label{conj:HL2}
If the potential $Q(a_{\min})$ is bounded, the Gaussian wing estimate \textup{(10.22)} of
\cite{HL} is valid.
\end{hlconjecture}

\Cref{conj:HL1} is the proposed right-wing limit; \cref{conj:HL2} would justify the
unrestricted-distance Gaussian estimate from the boundedness of the potential at the
saddle point. As the book notes, \cref{conj:HL1} was proved for $\rho=0$ by Benaim, Friz and Lee
\cite{BFL}. To our knowledge, the fixed-maturity correlated case $\rho\ne0$,
$\beta\in(0,1)$ has remained open (see \cref{sec:lit}). The difficulty is the gap
between evaluating a coefficient at the unrestricted saddle point and controlling the
Dirichlet semigroup of the diffusion killed at the boundary of its state space.

\subsection{Main Results}\label{sec:results}
We measure the right tail by the probability that the terminal forward ends at or above a
large strike, and compare its decay with two quadratic rates:
\begin{equation}\label{eq:PK}
P(k) := \Prob\bigl(F_T\ge f_0 e^{k}\bigr), \qquad
c:=\frac{(1-\beta)^2}{2\nu^2T}, \qquad c_-:=\frac{c}{1-\rho^2}.
\end{equation}
For $\rho\ne0$ the rate $c_-$ exceeds $c$, and our main result shows which of the two
governs the tail.

\begin{theorem}[Logarithmic Right-Tail Asymptotics for Absorbed SABR]\label{thm:main-tail}
For every ${\rho\in(-1,1)}$,
\[
\lim_{k\to\infty}\frac{-\ln P(k)}{k^2}
=
\begin{cases}
c_-=\dfrac{(1-\beta)^2}{2\nu^2T(1-\rho^2)},&\rho<0,\\[6pt]
c=\dfrac{(1-\beta)^2}{2\nu^2T},&\rho\ge0.
\end{cases}
\]
\end{theorem}

Through a model-free transfer from tail probabilities to call prices
(\cref{lem:transfer}), the decay rate determines the right wing of the
implied-volatility smile.

\begin{corollary}[Right-Wing Implied-Volatility Limit]\label{cor:wing}
For every $\rho\in(-1,1)$,
\[
\lim_{k\to\infty}\sigma_{\BS}(k)
=
\begin{cases}
\dfrac{\nu\sqrt{1-\rho^2}}{1-\beta},&\rho<0,\\[6pt]
\dfrac{\nu}{1-\beta},&\rho\ge0.
\end{cases}
\]
\end{corollary}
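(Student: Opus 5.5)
The corollary is Theorem~\ref{thm:main-tail} fed into the model-free transfer principle \cref{lem:transfer}, followed by Lee/Benaim--Friz-type tail-wing asymptotics. I will take \cref{lem:transfer} to mean the following: if $-\ln P(k)\sim L k^2$ as $k\to\infty$, then the normalized call price $C(k)=\E[(F_T-f_0e^k)^+]/f_0$ satisfies $-\ln C(k)\sim Lk^2$ as well.

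\textbf{Step 1: call prices.} The upper bound $C(k)\le \E[F_T\mathbf 1_{F_T\ge f_0e^k}]/f_0$ follows from Hölder's inequality together with finite moments of $F_T$ of some order $p>1$. Such moments hold for absorbed SABR with $\beta<1$, since $F$ is a nonnegative martingale with enough integrability. This gives $-\ln C(k)\ge (1-\epsilon)Lk^2$. The lower bound $C(k)\ge f_0(e^{k+\delta}-e^k)P(k+\delta)/f_0$, combined with $(k+\delta)^2\sim k^2$, gives the matching upper bound on $-\ln C(k)$.

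\textbf{Step 2: implied volatility.} I then invert the Black--Scholes formula at fixed $T$. For total implied variance $v=\sigma^2T$, Mills' ratio gives $-\ln C_{\BS}(k,v)= k^2/(2v)+ k/2 + O(\ln k)$ uniformly when $v$ stays in a compact subset of $(0,\infty)$; more precisely, $-\ln C_{\BS}(k,v)\sim k^2/(2v)$ whenever $v=o(k)$. Equating the two gives $\sigma_{\BS}(k)^2T\to 1/(2L)$.

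\textbf{Step 3: plugging in.} With $L=c_-$ for $\rho<0$ we get $\sigma^2\to \nu^2(1-\rho^2)/(1-\beta)^2$. With $L=c$ for $\rho\ge0$ we get $\nu^2/(1-\beta)^2$. Taking square roots gives the statement.

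\textbf{Main obstacle.} The delicate point is justifying the Black--Scholes inversion: a priori $\sigma_{\BS}(k)$ need not stay bounded. I handle this with two monotonicity arguments in $v$.

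First, suppose $\sigma_{\BS}(k)\ge \bar\sigma$ along a subsequence, where $\bar\sigma^2T>1/(2L)$. Monotonicity of $C_{\BS}$ in $v$ then forces $C(k)\ge C_{\BS}(k,\bar\sigma^2T)=e^{-k^2/(2\bar\sigma^2T)+O(k)}$. This contradicts the lower bound $-\ln C(k)\ge(1-\epsilon)Lk^2$ from Step 1, once $\epsilon$ is small.

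Second, the symmetric argument with some $\underline\sigma$ satisfying $\underline\sigma^2T<1/(2L)$ excludes small values. This gives both the $\limsup$ and $\liminf$ bounds without ever needing uniformity for unbounded $v$.
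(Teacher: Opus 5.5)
Your architecture matches the paper's: \cref{thm:main-tail} fed into the model-free transfer of \cref{lem:transfer}, with the Black--Scholes inversion done by the same monotonicity-in-$\sigma$ subsequence argument as part~(iii) of that lemma. The step that fails as written is the call-price upper bound in Step~1.

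H\"older with a \emph{fixed} exponent $p>1$ gives $C(k)\le f_0^{-1}(\E F_T^p)^{1/p}P(k)^{1-1/p}$. Hence it yields only $\liminf_k(-\ln C(k))/k^2\ge(1-1/p)L$. The factor $1-1/p$ is not an arbitrary $1-\epsilon$. Fed into your contradiction argument, it only excludes $\bar\sigma^2T>1/(2(1-1/p)L)$, so it does not pin down the limit. You would need moments of every order. Your justification (``$F$ is a nonnegative martingale with enough integrability'') does not give even $p>1$, since the (super)martingale property controls only $\E F_T\le f_0$.

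The repair follows from the theorem itself. The bound $P(\kappa)\le e^{-(L-\epsilon)\kappa^2}$ for large $\kappa$ gives $\E F_T^p=\int_0^\infty p x^{p-1}\Prob(F_T\ge x)\,dx<\infty$ for every $p$, so you may let $p\to\infty$ after $k\to\infty$.

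The paper avoids moments entirely. It uses the layer-cake identity $\mathcal C(k)=f_0\int_k^\infty e^{\kappa}P(\kappa)\,d\kappa$, valid by Fubini under $\E F_T\le f_0$ alone, together with $(k+u)^2\ge k^2+2ku$. This yields $-\ln\mathcal C(k)\ge(\Lambda-\epsilon)k^2-O(k)$ directly, which is why the paper stresses that the corollaries need only $\E F_T\le f_0$.

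Two harmless slips remain. First, the Black--Scholes expansion reads $k^2/(2v)-k/2+\dots$, since $d_1^2/2=k^2/(2v)-k/2+v/8$; you use only the leading term, so nothing breaks. Second, existence of $\sigma_{\BS}(k)$ requires $0<C(k)<1$, i.e.\ $P(k)>0$ for large $k$. This follows from the finite limit in the theorem.
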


Comparing this limit with Henry-Labord\`ere's prediction settles \cref{conj:HL1}.

\begin{corollary}[Resolution of \cref{conj:HL1}]\label{cor:resolution}
For every $\beta\in(0,1)$, the conjecture holds for $\rho\in[0,1)$ and fails for every
$\rho\in(-1,0)$. The wing limit
\begin{equation*}
\sigma_\infty(\rho)=\frac{\nu\sqrt{1-(\rho\wedge0)^2}}{1-\beta},
\end{equation*}
as a function of $\rho$, is $C^1$ but not $C^2$ at $\rho=0$: its second derivative from
the left is $-\nu/(1-\beta)$ and from the right is $0$. It does not depend on $\alpha$,
$f_0$ or $T$, and depends on $\beta$ only through the factor $(1-\beta)^{-1}$.
\end{corollary}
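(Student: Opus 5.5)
The statement to prove is \cref{cor:resolution}. It follows from \cref{cor:wing} by comparison and elementary calculus, so the plan is to read off each claim in turn.

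\emph{Step 1: the conjecture itself.} By \cref{cor:wing}, for $\rho\in[0,1)$ the limit of $\sigma_{\BS}(k)$ equals $\nu/(1-\beta)$, which is exactly \cref{conj:HL1}. For $\rho\in(-1,0)$ the limit is $\nu\sqrt{1-\rho^2}/(1-\beta)$. Since $\rho\neq0$ gives $0<1-\rho^2<1$, this is strictly smaller than $\nu/(1-\beta)$. The implied volatility therefore converges to a different value, and the conjecture fails for every such $\rho$. Both cases of \cref{cor:wing} are captured by the single expression
\[
\sigma_\infty(\rho)=\frac{\nu\sqrt{1-(\rho\wedge0)^2}}{1-\beta},
\]
because $\rho\wedge0=0$ for $\rho\ge0$ and $\rho\wedge0=\rho$ for $\rho<0$.

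\emph{Step 2: smoothness at $\rho=0$.} Write $C=\nu/(1-\beta)$ and $g(\rho)=\sqrt{1-\rho^2}$ on $(-1,1)$.
\begin{itemize}
\item On $[0,1)$, $\sigma_\infty\equiv C$, so all one-sided derivatives from the right at $0$ vanish.
\item On $(-1,0]$, $\sigma_\infty=Cg$. Here $g'(\rho)=-\rho(1-\rho^2)^{-1/2}$ and $g''(\rho)=-(1-\rho^2)^{-3/2}$.
\item At $\rho=0$ this gives $g'(0)=0$ and $g''(0)=-1$. The left first derivative is $0$ and the left second derivative is $-C=-\nu/(1-\beta)$.
\end{itemize}
The first derivatives match, and $\sigma_\infty'$ is continuous on $(-1,1)$: it equals $-C\rho(1-\rho^2)^{-1/2}$ on the left and $0$ on the right, and both tend to $0$. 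Hence $\sigma_\infty$ is $C^1$. The one-sided second derivatives, $-C$ from the left and $0$ from the right, differ, so $\sigma_\infty$ is not twice differentiable at $0$ and in particular not $C^2$.

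\emph{Step 3: parameter dependence.} This is immediate from the formula. It contains no $\alpha$, $f_0$ or $T$. The parameter $\beta$ enters only through the factor $(1-\beta)^{-1}$.

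I expect no real obstacle here; all the substance lies in \cref{thm:main-tail} and \cref{cor:wing}. The only point needing care is distinguishing "not $C^2$" from "not twice differentiable". The mismatch of the one-sided second derivatives shows that $\sigma_\infty''(0)$ does not exist, which gives the stronger of the two.
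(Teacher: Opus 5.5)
Your proposal is correct and follows the paper's own route: the paper obtains this corollary directly from \cref{cor:wing} (itself a consequence of \cref{thm:main-tail} via \cref{lem:transfer}), with the comparison against $\nu/(1-\beta)$ and the one-sided derivative computations left as elementary. Your computation $g''(\rho)=-(1-\rho^2)^{-3/2}$ gives left second derivative $-\nu/(1-\beta)$ and right second derivative $0$, which is exactly what the statement asserts.
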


Both corollaries follow from \cref{thm:main-tail} and the integrability
$\E F_T\le f_0$ alone.
\cref{fig:wing} shows the limit as a function of $\rho$, together with the value
predicted by \cref{conj:HL1}.

\begin{figure}[t]
\centering
\includegraphics[width=0.74\textwidth]{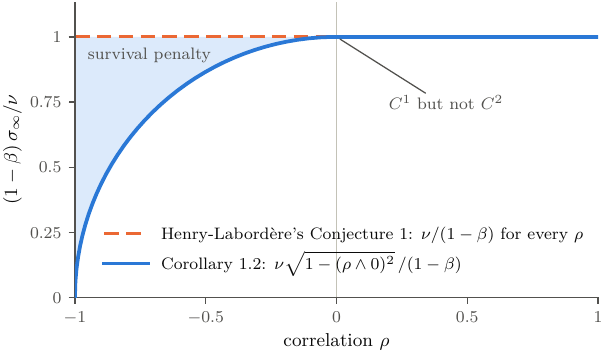}
\caption{The right-wing limit $\sigma_\infty$ of \cref{cor:wing}, normalized by
$\nu/(1-\beta)$, against the correlation $\rho$. Henry-Labord\`ere's \cref{conj:HL1} predicts the
value~$1$
for every $\rho$. It holds for $\rho\ge0$; for $\rho<0$ survival lowers the limit to
$\sqrt{1-\rho^2}$ (shaded gap).}
\label{fig:wing}
\end{figure}

Our third result explains the correction geometrically. In \cref{sec:geometry} we
write the model in Fermi coordinates $(R,q)$ of the hyperbolic plane, in which the
survival domain is the coordinate half-plane $D=\{R>\Rb\}$, and identify the decay rate with the
intrinsic Riemannian distance $d_D$ within $D$: for every $\rho\in(-1,1)$,
\begin{equation}\label{eq:intro-c2}
-\ln P(k)=\frac{d_D(z_0,\mathcal T_k)^2}{2T}+o(k^2)
\end{equation}
(\cref{cor:corrected-c2}), where $z_0$ is the initial state and $\mathcal T_k$
the terminal target set. The unrestricted distance yields the rate $c$ for every $\rho$;
the restriction to $D$ changes the leading coefficient exactly when $\rho<0$. Since the
potential \eqref{eq:Q} is finite for every $\rho$, the model shows that boundedness of
the potential at the unrestricted saddle point does not suffice for the Gaussian estimate
in \cref{conj:HL2}, and \eqref{eq:intro-c2} is its correct replacement for absorbed SABR.

\subsection{Mechanism and Proof Strategy}\label{sec:mech}
The asymmetry between positive and negative correlation is already visible in an exact
decomposition of the forward before absorption. Write $X=F^{1-\beta}/(1-\beta)$ for the
Lamperti--CEV coordinate and decompose
$W=\rho Z+\orth Z^\perp$, where $\orth:=\sqrt{1-\rho^2}$. The correlated stochastic
integral satisfies $\int_0^t a\,dZ=(a_t-\alpha)/\nu$ exactly, and conditionally on the
volatility path the orthogonal martingale $\int_0^t a\,dZ^\perp$ is a Brownian motion $B$
run on the quadratic-variation clock $V_t=\int_0^ta_s^2\,ds$. Hence, before absorption
(\cref{lem:cev,lem:cond}),
\begin{equation}\label{eq:decomp0}
X_t \;=\; X_0+\underbrace{\tfrac{\rho}{\nu}(a_t-\alpha)}_{\xi_t}\;+\;\orth\,B(V_t)\;-\;b\!\int_0^t\!\frac{a_s^2}{X_s}\,ds,
\qquad b=\frac{\beta}{2(1-\beta)}.
\end{equation}
A large terminal value requires a volatility excursion of logarithmic size
$n=(1-\beta)k$, which costs $ck^2+o(k^2)$. For $\rho\ge0$ the correlation term is bounded
below along such an excursion, survival is cheap, and the unconstrained rate is attained.
For $\rho<0$ the correlation term $\xi_t=-\frac{|\rho|}{\nu}(a_t-\alpha)$ becomes
increasingly negative along the very excursion that creates the wing, and survival of $F$
forces the orthogonal Brownian motion to stay above the \emph{moving lower barrier}
\[
B(V_t)\;\ge\;\frac{|\rho|}{\nu\orth}\,(a_t-\alpha)-\frac{X_0}{\orth}
\qquad\text{for all } t\le T.
\]
Along the exponential volatility path used in the lower bound, this barrier is a shifted
square-root profile in the \emph{operational time} $v=V_t$
(\cref{fig:mechanism}(a)). In the logarithmic time $s=\ln v$, the process
$U(s)=e^{-s/2}B(e^s)$ is a stationary Ornstein--Uhlenbeck process and the barrier
becomes an essentially constant level $\ell\asymp\sqrt n$ (\cref{fig:mechanism}(b)).
Holding $U$ above $\ell$ costs $\ell^2/8$ per unit of logarithmic time, over a window of
length about $2n$. This cost, $\rho^2c\,k^2/(1-\rho^2)$ to leading order, is of the same
order as that of the volatility excursion, and the two add up to $c_-=c/(1-\rho^2)$.
Equivalently, the effective volatility of volatility in the right wing is $\nu\orth$. The
absorbing boundary thus acts on the right tail through survival conditioning, not
through the mass at zero; the model-free tail-to-wing transfers \cite{Lee,BF,GL} are
unaffected.

\begin{figure}[t]
\centering
\includegraphics[width=\textwidth]{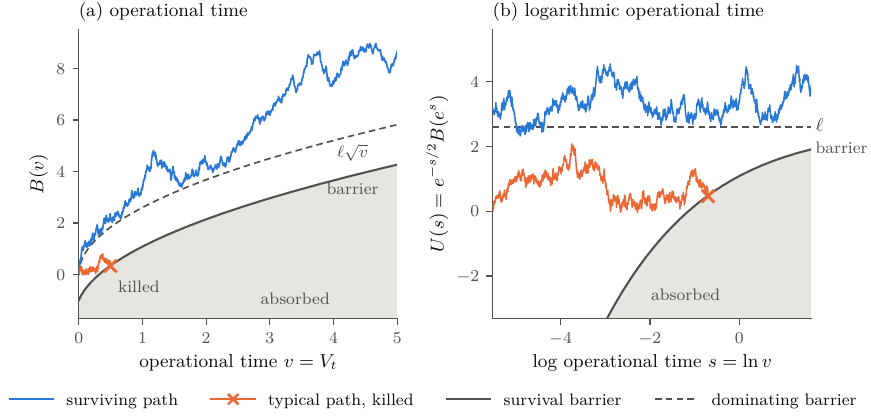}
\caption{Survival as a moving-barrier problem ($\rho<0$; schematic simulation).
(a)~Along the exponential volatility path, survival requires the orthogonal Brownian
motion $B$ to stay above a shifted square-root barrier in operational time $v=V_t$; a
typical path is killed early. (b)~In logarithmic time,
$U(s)=e^{-s/2}B(e^s)$ is a stationary Ornstein--Uhlenbeck process and the barrier rises
to the constant level~$\ell$, which dominates it. The lower-bound construction of
\cref{sec:lower} holds $U$ just above $\ell$, at a cost of about $\ell^2/8$ per
unit of~$s$.}
\label{fig:mechanism}
\end{figure}

Turning this mechanism into a proof requires matching bounds that do not presuppose path
concentration. For $\rho<0$, the upper bound on $P(k)$ (\cref{sec:upper}) drops
the nonpositive singular drift, discretizes the volatility excursion by level-hitting
times, and converts the conditional survival constraints into a finite-dimensional
Cameron--Martin energy. A deterministic convex inequality (\cref{lem:A5}) then bounds
the total cost from below; its key step, \cref{lem:defer}, shows that the part of the
Gaussian cost that is linear in the displacement is minimized by deferring the
displacement for as long as the barrier allows. The
matching lower bound (\cref{sec:lower}) combines a change-of-measure tube for
volatility with a Brownian path that enters and stays above the induced barrier,
followed by a bootstrap control of the singular drift. For $\rho\ge0$
(\cref{sec:pos}), a terminal-only upper bound is paired with a lower-bound
construction based on scale-invariant doubling and $O(\ln n)$ dyadic level-raising
steps. These arguments use elementary probability and finite-dimensional convexity rather
than a sample-path large-deviation principle, heat-kernel estimates or spectral
asymptotics.

\subsection{Relation to Prior Work}\label{sec:lit}
The problem lies at the intersection of several established theories, but not inside any
one of their proved regimes. Among the results we located, summarized in
\cref{tab:lit}, none treats the fixed-maturity extreme right wing of correlated SABR
with $\beta\in(0,1)$. The closest results concern the uncorrelated $\beta<1$ wing, the
moment-explosion regime at $\beta=1$, asymptotics of survival probabilities, small-time
geodesic expansions, and the left wing generated by the atom at zero. In particular, Paulot
\cite{Paulot} cautions that small-time geodesic expansions are valid only as long as the
geodesic does not reach the boundary; the present result is a fixed-maturity, extreme-strike manifestation of the
same boundary obstruction.

\begin{table}[t]\centering\small
\begin{tabular}{>{\raggedright\arraybackslash}p{4.4cm}>{\raggedright\arraybackslash}p{4.9cm}>{\raggedright\arraybackslash}p{4.9cm}}
\toprule
Result & Reference & Scope \\ \midrule
Right-wing limit $\nu/(1-\beta)$ & Benaim--Friz--Lee \cite{BFL} & $\rho=0$ only \\
Moment explosions, smile slopes & Andersen--Piterbarg \cite{AP}; Lee \cite{Lee} & $\beta=1$ regime; model-free \\
Sharp correlated right tail & Forde--Zhang \cite{FZ} & $\beta=1$ only; moment explosion; discontinuity at $\rho=0^-$ \\
Mass at zero and the left wing & Gulisashvili--Horvath--Jacquier \cite{GHJmass}; De~Marco--Hillairet--Jacquier \cite{DMHJ}; Gulisashvili \cite{Gatoms} & uncorrelated SABR; model-free atom asymptotics; small-strike side \\
Boundary hitting, SABR plane & Gulisashvili--Horvath--Jacquier \cite{GHJhit} & auxiliary geometric process \\
Survival probability & Yang--Wan \cite{YW}; Chen--Yang \cite{CY} & perturbative expansion; small-time regime \\
Correlated$\to$uncorrelated map & Antonov--Spector \cite{AS}; Antonov--Konikov--Spector \cite{AKS13,AKS19} & short-time-matched approximation \\
Geodesic expansions & Paulot \cite{Paulot} & small time; explicit boundary caveat \\
Regularity classification & D\"oring--Horvath--Teichmann \cite{DHT} & symmetric Dirichlet forms only if $\rho=0$ or $\beta\in\{0,1\}$ \\
Large maturity & Forde--Pogudin \cite{FP} & $(\beta=1,\rho\le0)$ or $(\beta<1,\rho=0)$ \\
\bottomrule
\end{tabular}
\caption{Selected results on extreme strikes and boundary effects for SABR-type models.}\label{tab:lit}
\end{table}

\subsection{Scope and Organization}\label{sec:scope}
Before turning to the proofs, we record precisely what is proved. All limits are taken
for fixed $T>0$, $\beta\in(0,1)$ and $\rho\in(-1,1)$. We do not
prove uniformity in these parameters, a quantitative remainder, results at $\rho=\pm1$ or
$\beta\in\{0,1\}$, or a left-wing theorem. No Dirichlet heat-kernel estimate is used or
proved; \cref{rem:heat-kernel} describes what such an estimate would have to
provide. For $\rho<0$ we state a conjectural $k^{3/2}$ correction
(\cref{conj:subleading}), supported by formal calculations only; no proved
result depends on it. The numerical results of \cref{sec:num} illustrate the
theorems and play no role in the proofs. Selected deterministic steps have also been formalized in
Lean; see the Supplementary Material section.

\cref{sec:setup} collects the reductions: the Lamperti decomposition, conditioning
on volatility, and the transfer to implied volatility. \cref{sec:upper,sec:lower} prove the upper and lower bounds for $\rho<0$;
\cref{sec:pos} treats $\rho\ge0$ and completes the proof of
\cref{thm:main-tail}. \cref{sec:geometry} proves the intrinsic-distance
identification, \cref{sec:num} presents numerical illustrations, and
\cref{sec:disc} discusses the subleading correction and neighbouring parameter
regimes. \cref{app:meas} records measurability facts and \cref{app:num}
the numerical methods.

\section{Preliminaries}\label{sec:setup}

The proofs rest on two reductions. The Lamperti--CEV transform isolates the correlation
term and lets us represent the orthogonal martingale in its quadratic-variation time; a
model-free argument converts logarithmic tail rates into implied-volatility limits.

Throughout, $\beta\in(0,1)$, $\nu>0$, $T>0$ and $\rho\in(-1,1)$ are fixed,
$\orth=\sqrt{1-\rho^2}$, and
\[
C:=\frac{|\rho|}{\nu\orth},\qquad b:=\frac{\beta}{2(1-\beta)},\qquad
n:=(1-\beta)k,\qquad m:=\frac{e^{(1-\beta)k}f_0^{1-\beta}}{1-\beta},
\]
so that $\ln m=n+O(1)$. Constants $c_1,c_2,\dots$ depend only on the model parameters and
on the auxiliary parameters fixed in the current section, never on $k$; their values may
change from section to section. Upper and lower bounds always refer to $P(k)$.

\subsection{The Lamperti Decomposition}
The first reduction is a change of coordinates. In the Lamperti coordinate the component
driven by the volatility Brownian motion integrates exactly, leaving an orthogonal
Gaussian martingale and a nonpositive singular drift.

\begin{lemma}[Lamperti Decomposition]\label{lem:cev}
Let $X=F^{1-\beta}/(1-\beta)$. There is an event $\Omega_{\mathrm{CEV}}$ with
$\Prob(\Omega_{\mathrm{CEV}})=1$ such that, for every $\omega\in\Omega_{\mathrm{CEV}}$ and
every $t\in[0,T]$ with $t<\zeta(\omega)$,
\begin{equation}\label{eq:decomp}
X_t \;=\; X_0+\xi_t+\orth\,M_t-b\int_0^t\frac{a_s^2}{X_s}\,ds,
\qquad \xi_t=\frac{\rho}{\nu}(a_t-\alpha),\quad M_t=\int_0^t a_s\,dZ^\perp_s,
\end{equation}
where continuous versions are used throughout. Moreover,
$\{F_T\ge f_0e^k\}=\{X_T\ge m,\ \zeta>T\}$ up to a null set; in particular
$P(k)=\Prob(X_T\ge m,\ \zeta>T)$.
\end{lemma}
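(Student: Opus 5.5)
The plan is to apply Itô's formula to $X=F^{1-\beta}/(1-\beta)$ on the stochastic interval before absorption, localized away from zero. For $\varepsilon>0$ put $\tau_\varepsilon:=\inf\{t\ge0:F_t\le\varepsilon\}$, so that $\tau_\varepsilon\uparrow\zeta$ as $\varepsilon\downarrow0$ by continuity of $F$. On $[0,\tau_\varepsilon]$ the function $g(x)=x^{1-\beta}/(1-\beta)$ is $C^2$ on the range of $F$. Itô's formula gives
\[
dX_t=F_t^{-\beta}\,dF_t-\tfrac{\beta}{2}F_t^{-\beta-1}\,d\langle F\rangle_t
=a_t\,dW_t-\tfrac{\beta}{2}a_t^2F_t^{\beta-1}\,dt .
\]
Since $F^{\beta-1}=\bigl((1-\beta)X\bigr)^{-1}$, the drift equals $-b\,a_t^2/X_t$ with $b=\beta/(2(1-\beta))$. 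Next I substitute $W=\rho Z+\orth Z^\perp$. The correlated part is computed from $da=\nu a\,dZ$, which gives $\int_0^t a_s\,dZ_s=(a_t-\alpha)/\nu$ exactly; this is the term $\xi_t$. The orthogonal part is $\orth M_t$, where $M=\int a\,dZ^\perp$ is a continuous local martingale defined on all of $[0,T]$, because $a$ is continuous and $\int_0^T a_s^2\,ds<\infty$ almost surely.

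The only delicate step is obtaining a single null set that works for all $t<\zeta$ simultaneously. I will fix continuous versions of $M$ and of the stochastic integrals, and take a sequence $\varepsilon_j\downarrow0$. For each $j$, the identity \eqref{eq:decomp} holds for all $t\le\tau_{\varepsilon_j}\wedge T$ off a null set $N_j$, since both sides are continuous and agree almost surely at each rational time. Stopped stochastic integrals coincide with integrals of stopped integrands, so the stopped versions agree with the global $M$ up to null sets. Define $\Omega_{\mathrm{CEV}}$ as the complement of $\bigcup_j N_j$ together with the null sets where $a$ or $M$ fail to be continuous. For $\omega\in\Omega_{\mathrm{CEV}}$ and any $t<\zeta(\omega)\wedge T$, continuity and positivity of $F$ give $\min_{[0,t]}F>0$, hence $t\le\tau_{\varepsilon_j}$ for some $j$, and \eqref{eq:decomp} follows. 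The Lebesgue integral $\int_0^t a_s^2/X_s\,ds$ is finite there because $X$ is bounded below on $[0,t]$.

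For the event identity, suppose $F_T\ge f_0e^k>0$. Then $F_T\ne0$, so by the absorbing convention $T<\zeta$, and $X_T=g(F_T)\ge g(f_0e^k)=m$ because $g$ is increasing. Conversely, if $\zeta>T$ and $X_T\ge m$, then $F_T=((1-\beta)X_T)^{1/(1-\beta)}\ge f_0e^k$. Both implications hold pathwise off the null set on which $F$ fails to be continuous or fails to obey the absorption convention. Taking probabilities gives $P(k)=\Prob(X_T\ge m,\ \zeta>T)$.
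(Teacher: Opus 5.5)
Your proposal is correct and follows essentially the same route as the paper. Both arguments apply It\^o's formula to the process stopped at $\inf\{t:F_t\le\varepsilon\}$ (equivalent to the paper's $\zeta_r$ for $X$) and use the exact identity $\int_0^t a\,dZ=(a_t-\alpha)/\nu$. Both then take a countable union of null sets and use continuity to cover all admissible $t$ at once, and both obtain the event identity from monotonicity of $g$ and the absorption convention. One small slip: you write $t<\zeta(\omega)\wedge T$, but the statement needs $t\le T$ with $t<\zeta$, since the case $t=T$ on $\{\zeta>T\}$ is used later. Your $\min_{[0,t]}F>0$ argument covers that case unchanged.
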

\begin{proof}
Apply It\^o's formula to the processes stopped at $\zeta_r:=\inf\{t:X_t\le r\}$ and let
$r\downarrow0$. With $g(F)=F^{1-\beta}/(1-\beta)$ this gives $dX=a\,dW-b(a^2/X)\,dt$
before $\zeta$, using $F^{\beta-1}=((1-\beta)X)^{-1}$. Decompose $W=\rho Z+\orth Z^\perp$
and use $da=\nu a\,dZ$, whence $\int_0^t a\,dZ=(a_t-\alpha)/\nu$. Intersecting the
full-measure events obtained at rational stopping levels and using continuity yields one
event on which the identity holds simultaneously for every admissible $t$. The event
identity follows from the strict monotonicity of $F\mapsto F^{1-\beta}/(1-\beta)$ on
$(0,\infty)$ and the absorption convention, since $F_T=0<f_0e^k$ on $\{\zeta\le T\}$.
\end{proof}

We write
\begin{equation}\label{eq:Xbar}
\bar X_t:=X_0+\xi_t+\orth M_t
\end{equation}
for the process with the singular drift removed.

\subsection{Conditioning on Volatility: Operational Time}
Conditioning on the volatility path completes the first reduction: the orthogonal
martingale then becomes a Brownian motion run on its quadratic-variation clock.

\begin{lemma}[Conditional Brownian Time Change]\label{lem:cond}
Let $\mathsf W$ be Wiener measure on $C([0,\infty))$, let $\Prob_a$ denote the law of the
volatility path $a$ on $C([0,T])$, and for a continuous path $y$ put
\begin{equation*}
V_t(y):=\int_0^t y_s^2\,ds,\qquad \Theta_y(w)(t):=w(V_t(y)).
\end{equation*}
There is a measurable regular
conditional distribution $\mathsf K$ of the $C([0,T])$-valued random variable $M$ given
$\sigma(a)$ such that, for $\Prob_a$-almost every volatility path $y$,
\[
\mathsf K(y,\,\cdot\,)=\mathsf W\circ\Theta_y^{-1}.
\]
Equivalently, for such $y$, let $t_y:[0,V_T(y)]\to[0,T]$ be the inverse of
$t\mapsto V_t(y)$; then $v\mapsto M_{t_y(v)}$ has the standard Wiener law on
$[0,V_T(y)]$.
\end{lemma}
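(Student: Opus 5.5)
The idea is to use the independence of $Z$ and $Z^\perp$. Then $M=\int_0^\cdot a\,dZ^\perp$ is a Wiener integral of $Z^\perp$ against the deterministic integrand $y$ once the path $a=y$ is frozen. We identify its conditional law first on finite-dimensional distributions and then pass to path space.

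Since $a$ is $\sigma(Z)$-measurable and $Z^\perp$ is independent of $Z$, I would enlarge the conditioning to $\sigma(Z)$. For a deterministic continuous $y$, the Wiener integral $I_t(y):=\int_0^t y_s\,dZ^\perp_s$ is a centred Gaussian process. Its covariance is $\E[I_sI_t]=\int_0^{s\wedge t}y_u^2\,du=V_{s\wedge t}(y)$, which is exactly the covariance of $w(V_t(y))$ under $\mathsf W$. So the continuous version of $I(y)$ has law $\mathsf W\circ\Theta_y^{-1}$ on $C([0,T])$; continuity of $\Theta_y(w)$ is clear since $V(y)$ is continuous.

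The main obstacle is the substitution step. We must justify that the stochastic integral $M_t=\int_0^t a\,dZ^\perp$, computed with the random integrand $a$, agrees almost surely with $I(y)|_{y=a}$. That is, we need a jointly measurable version $(y,\omega)\mapsto I(y)(\omega)$ with $M=I(a)$ a.s. I would handle this by approximation.

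\begin{itemize}
\item Take Riemann sums $I^{(N)}_t(y)=\sum_j y_{t_j}(Z^\perp_{t_{j+1}\wedge t}-Z^\perp_{t_j\wedge t})$ along dyadic partitions. These are jointly measurable in $(y,\omega)$, and for continuous integrands they converge uniformly on $[0,T]$ in probability.
\item To obtain a measurable limit, pass along a fixed subsequence for which $\sum_N\sup_{\|y\|\le R}\Prob(\|I^{(N+1)}-I^{(N)}\|_\infty>2^{-N})$ is summable. A cleaner alternative is integration by parts, $I_t(y)=y_tZ^\perp_t-\int_0^tZ^\perp_s\,dy_s$, but that only works when $y$ has finite variation, and $a$ does not. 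So I would stick with the Riemann-sum route, using for each $y$ the limit in probability and the standard fact that an a.s. limit of measurable maps is measurable on the set where it exists.
\item With $I(a)$ defined this way, $M=I(a)$ a.s., because the same Riemann sums with $y=a$ converge to the Itô integral. The integrand $a$ is continuous and adapted, and $Z^\perp$ is a Brownian motion in the joint filtration.
\end{itemize}

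With joint measurability in hand, the rest is a Fubini computation. For bounded measurable $G$ on $C([0,T])$ and $H$ on $C([0,T])$, independence of $Z$ and $Z^\perp$ gives
\[
\E[G(M)H(a)]=\int H(y)\,\E[G(I(y))]\,\Prob_a(dy)=\int H(y)\int G\,d(\mathsf W\circ\Theta_y^{-1})\,\Prob_a(dy).
\]
Hence $\mathsf K(y,\cdot):=\mathsf W\circ\Theta_y^{-1}$ is a regular conditional distribution. Its measurability in $y$ follows from continuity of $y\mapsto\Theta_y(w)$ for fixed $w$, together with a monotone-class argument. This step is routine and could be deferred to \cref{app:meas}.

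For the equivalent formulation, $V(y)$ is strictly increasing whenever $y>0$, and this holds for $\Prob_a$-a.e. $y$ since $a>0$. Its inverse $t_y$ is therefore continuous, and $M_{t_y(v)}=w(v)$ under $\mathsf K(y,\cdot)$, which gives the standard Wiener law on $[0,V_T(y)]$.
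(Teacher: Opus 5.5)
Your overall route is the paper's (\cref{app:meas}(ii)--(iii)). It uses independence of $a$ and $Z^\perp$, the covariance $V_{s\wedge t}(y)$ of the deterministic-integrand integral, Borel measurability of $(y,w)\mapsto w\circ V(y)$ for the kernel, and inversion of $V(y)$ for the equivalent form. The paper simply invokes the deterministic-integrand theorem of \cite[Ch.~3]{KS} for the substitution $M=I(a)$. Your hand-made proof of that step contains one false claim when $\|y\|$ is the sup norm: no subsequence $(N_k)$ makes
\[
\sum_k\sup_{\|y\|\le R}\Prob\bigl(\|I^{(N_{k+1})}(y)-I^{(N_k)}(y)\|_\infty>2^{-k}\bigr)
\]
finite. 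The reason is that the Riemann-sum error depends on the modulus of continuity of $y$, not on its size. For dyadic levels $N<N'$ with left grid points $\lfloor s\rfloor_N$,
\[
\E\Bigl[\bigl(I^{(N')}_T(y)-I^{(N)}_T(y)\bigr)^2\Bigr]=\int_0^T\bigl(y(\lfloor s\rfloor_{N'})-y(\lfloor s\rfloor_N)\bigr)^2\,ds.
\]
Take the piecewise-linear path equal to $R$ on the level-$N$ grid and to $-R$ at the other level-$N'$ grid points. It has sup norm $R$ and makes this variance at least $2R^2T$, so the supremum of the probabilities stays bounded away from zero along every subsequence. Your fallback, the limit in probability for each fixed $y$, does not by itself give a jointly measurable version either, because an almost surely convergent subsequence may depend on $y$.

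The repair is routine. Fix one deterministic subsequence $N_k$ with $I^{(N_k)}(a)\to M$ uniformly on $[0,T]$ almost surely. Such a subsequence exists because Riemann sums of the continuous adapted integrand $a$ converge to the It\^o integral uniformly in probability. Then define $I(y)$ as the limit along $N_k$ where it exists. The Riemann sums are continuous functions of $(y,Z^\perp)$ and $(a,Z^\perp)$ has law $\Prob_a\otimes\mathsf W$. Fubini then shows that for $\Prob_a$-a.e.\ $y$ this limit exists $\mathsf W$-almost surely and agrees with the Wiener integral of $y$, which is all your identity for $\E[G(M)H(a)]$ uses. Alternatively, keep your uniform bound but take it over balls of a H\"older norm of order $\gamma<\tfrac12$. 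There the variance above is at most $R^2T^{1+2\gamma}2^{-2\gamma N}$, so a fast enough subsequence works, and these balls exhaust the paths of $a$. With either fix, the rest of your argument goes through as written.
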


The jointly measurable versions and the kernel construction are recorded in
\cref{app:meas}. We call $v=V_t$ the \emph{operational time}, write $t(\cdot)$
for the inverse quadratic-variation clock of the volatility path, and set
\[
B(v):=M_{t(v)},\qquad 0\le v\le V_T,
\]
so that $M_t=B(V_t)$. By \cref{lem:cond}, conditionally on $\sigma(a)$, $B$ is a
standard Brownian motion on $[0,V_T]$. ``Conditionally on the volatility path'' always
means evaluation under the kernel $\mathsf K$ for $\Prob_a$-almost every path; the
operational times, thresholds and events entering such evaluations are measurable
functions of the path.

\subsection{Transfer to Implied Volatility}
The second reduction is model-free: once the logarithmic tail rate is known, no further
SABR structure is needed to identify the right-wing limit. The forward $F$ is a nonnegative local martingale, hence a
supermartingale, so $\E F_T\le f_0$; this is all the integrability we use. Write
$\mathcal C(k):=\E(F_T-f_0e^k)^+$ for the undiscounted call price.

\begin{lemma}[Tail-to-Implied-Volatility Transfer]\label{lem:transfer}
If $-\ln P(k)/k^2\to\Lambda\in(0,\infty)$ and $P(k)>0$ for all sufficiently large $k$,
then $\sigma_{\BS}(k)\to 1/\sqrt{2\Lambda T}$.
\end{lemma}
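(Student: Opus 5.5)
The plan is to squeeze the call price $\mathcal C(k)$ between two bounds expressed through $P$, show $-\ln\mathcal C(k)/k^2\to\Lambda$, and then invert the Black--Scholes formula in the regime $k\to\infty$ with $T$ fixed.

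\emph{Step 1 (lower bound on the call).} For any fixed $\varepsilon>0$,
\[
\mathcal C(k)\ge (f_0e^{k+\varepsilon}-f_0e^k)\,P(k+\varepsilon).
\]
Hence
\[
\liminf_{k\to\infty}\frac{\ln\mathcal C(k)}{k^2}\ge -\Lambda .
\]
No smoothness of $P$ is needed for this: the $\varepsilon$-shift does not change the $k^2$ rate.

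\emph{Step 2 (upper bound on the call).} Apply Hölder's inequality with exponents $p,q$:
\[
\mathcal C(k)\le \E\bigl[F_T\mathbf 1_{F_T\ge f_0e^k}\bigr]\le (\E F_T^p)^{1/p}P(k)^{1/q}.
\]
Only $\E F_T\le f_0$ is assumed, so $p$ must be $1$, and this route fails. Instead use the layer-cake formula
\[
\mathcal C(k)=\int_{f_0e^k}^\infty \Prob(F_T\ge x)\,dx=f_0\int_k^\infty e^{u}P(u)\,du .
\]
For large $u$, $P(u)\le e^{-(\Lambda-\delta)u^2}$, and the integral is then bounded by $e^{-(\Lambda-\delta)k^2+O(k)}$. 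Thus
\[
\limsup_{k\to\infty}\frac{\ln\mathcal C(k)}{k^2}\le-\Lambda .
\]
This route uses only the tail rate; the bound $\E F_T\le f_0$ is needed only to make $\mathcal C$ finite and to keep it below the BS no-arbitrage range $(f_0-K)^+\le \mathcal C<f_0$, so that $\sigma_{\BS}$ is well defined. Positivity of $P$ gives $\mathcal C(k)>0$, which excludes $\sigma_{\BS}=0$.

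\emph{Step 3 (BS inversion).} Let $c_{\BS}(k,\sigma)$ denote the Black--Scholes call price with log-moneyness $k$ and volatility $\sigma$. It is increasing in $\sigma$, and for fixed $\sigma>0$ the standard Mills-ratio asymptotic gives
\[
-\ln c_{\BS}(k,\sigma)=\frac{k^2}{2\sigma^2T}+O(k)\qquad(k\to\infty),
\]
locally uniformly in $\sigma$ on compacts of $(0,\infty)$. Fix $\sigma_\pm=(2\Lambda T)^{-1/2}(1\pm\eta)$. By Step~2, eventually $\mathcal C(k)<c_{\BS}(k,\sigma_+)$, so by monotonicity $\sigma_{\BS}(k)<\sigma_+$. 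Symmetrically, by Step~1, $\sigma_{\BS}(k)>\sigma_-$. Letting $\eta\downarrow0$ gives $\sigma_{\BS}(k)\to1/\sqrt{2\Lambda T}$.

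The main obstacle is the upper bound in Step~2, because only first-moment integrability is available. The layer-cake representation handles this, but it requires the tail bound $P(u)\le e^{-(\Lambda-\delta)u^2}$ uniformly for all $u\ge k$. That bound follows directly from the hypothesis for $u$ beyond some $u_0(\delta)$, and $k$ is eventually larger than $u_0(\delta)$. The remaining care is the uniformity of the Mills-ratio estimate at the two fixed volatilities $\sigma_\pm$, which is elementary. Alternatively, one could cite Lee's or Benaim--Friz's tail-wing formula \cite{Lee,BF}, but the direct argument above is self-contained.
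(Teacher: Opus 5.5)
Your proposal is correct and follows essentially the same route as the paper: the layer-cake identity with the Gaussian-type tail bound for the upper estimate on $\mathcal C(k)$, a shifted-strike truncation for the lower estimate, and comparison with the Black--Scholes tail $k^2/(2\sigma^2T)+O(k)$ at volatilities on either side of $1/\sqrt{2\Lambda T}$, using monotonicity in $\sigma$. The differences are cosmetic. You use the indicator bound $f_0e^k(e^\varepsilon-1)P(k+\varepsilon)$ where the paper truncates the layer-cake integral to $[k,k+1]$, and you give a direct $\sigma_\pm$ sandwich where the paper argues by subsequence and contradiction. The aborted H\"older detour in Step~2 can simply be deleted.
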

\begin{proof}
\emph{(i) Prices.} By the layer-cake identity (Fubini, using $\E F_T<\infty$),
$\mathcal C(k)=f_0\int_k^\infty e^{\kappa}P(\kappa)\,d\kappa$. Fix $\varepsilon>0$ and
$k_0$ with $e^{-(\Lambda+\varepsilon)\kappa^2}\le P(\kappa)\le e^{-(\Lambda-\varepsilon)\kappa^2}$
for $\kappa\ge k_0$. Substituting $\kappa=k+u$ and using $(k+u)^2\ge k^2+2ku$, for large
$k$
\begin{equation*}
\begin{aligned}
\mathcal C(k)&\le f_0e^{k-(\Lambda-\varepsilon)k^2}\int_0^\infty e^{u(1-2(\Lambda-\varepsilon)k)}\,du
= \frac{f_0\,e^{k-(\Lambda-\varepsilon)k^2}}{2(\Lambda-\varepsilon)k-1},\\
\mathcal C(k)&\ge f_0\int_k^{k+1}e^{\kappa}P(\kappa)\,d\kappa\ge f_0e^{k-(\Lambda+\varepsilon)(k+1)^2}.
\end{aligned}
\end{equation*}
Hence $-\ln\mathcal C(k)/k^2\to\Lambda$. Since $0<\mathcal C(k)<f_0$, the implied
volatility $\sigma_{\BS}(k)\in(0,\infty)$ exists and is unique by strict monotonicity of
the Black--Scholes price in $\sigma$.

\emph{(ii) Black--Scholes tail.} With $d_1=(-k+\sigma^2T/2)/(\sigma\sqrt T)$ and
$d_2=d_1-\sigma\sqrt T$ one has $d_1^2-d_2^2=-2k$, hence $e^{k}\varphi(d_2)=\varphi(d_1)$,
and the Mills bounds $\Phi(-x)=\varphi(x)(x^{-1}-x^{-3}+3\vartheta x^{-5})$,
$\vartheta\in[0,1]$, give
\[
C_{\BS}(k,\sigma)=f_0\bigl[\Phi(d_1)-e^k\Phi(d_2)\bigr]
= f_0\,\varphi(d_1)\,\frac{\sigma\sqrt T}{|d_1||d_2|}\bigl(1+O(k^{-2})\bigr),
\]
whence, uniformly on compact $\sigma$-sets,
\begin{equation}\label{eq:bs-tail}
-\ln C_{\BS}(k,\sigma)=\frac{k^2}{2\sigma^2T}-\frac k2+2\ln k
+\Bigl[\frac{\sigma^2T}{8}+\ln\sqrt{2\pi}-\ln\bigl(\sigma^3T^{3/2}\bigr)-\ln f_0\Bigr]+o(1).
\end{equation}
Only the leading term is used below; the bracketed constant follows from the prefactor
$f_0$, $d_1^2/2=k^2/(2\sigma^2T)-k/2+\sigma^2T/8$ and
$|d_1d_2|=k^2(\sigma^2T)^{-1}(1+O(1/k))$.

\emph{(iii) Identification of the limit.} Suppose $\sigma_{\BS}(k_j)\to\sigma^*\in(0,\infty]$
along a subsequence with $(\sigma^*)^2>1/(2\Lambda T)$. Pick
$\sigma_2\in(1/\sqrt{2\Lambda T},\sigma^*)$; eventually
$\mathcal C(k_j)\ge C_{\BS}(k_j,\sigma_2)\ge e^{-k_j^2(1+o(1))/(2\sigma_2^2T)}$,
contradicting $-\ln\mathcal C/k^2\to\Lambda>1/(2\sigma_2^2T)$. The case
$(\sigma^*)^2<1/(2\Lambda T)$ (including $\sigma^*=0$) is symmetric. Hence
$\sigma_{\BS}(k)\to1/\sqrt{2\Lambda T}$.
\end{proof}

By \cref{lem:transfer}, \cref{thm:main-tail} implies
\cref{cor:wing}, and hence \cref{cor:resolution}; the positivity
$P(k)>0$ follows from the lower-bound constructions of \cref{sec:lower,sec:pos}. \crefrange{sec:upper}{sec:pos} prove
\cref{thm:main-tail}.

\section{Upper Bound for Negative Correlation}\label{sec:upper}

We begin with the upper bound on $P(k)$ for negative correlation. This half of
\cref{thm:main-tail} must rule out every cheaper way of producing the tail,
including paths that raise the orthogonal component before the barrier requires it.

\begin{proposition}[Upper Bound, $\rho<0$]\label{prop:upper}
For $\rho\in(-1,0)$,
\begin{equation}\label{eq:upper}
\liminf_{k\to\infty}\frac{-\ln P(k)}{k^2}\;\ge\;c_-.
\end{equation}
\end{proposition}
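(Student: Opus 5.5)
The plan is to remove the singular drift, condition on the volatility path, and reduce the survival-plus-terminal event to a finite family of Gaussian half-space constraints on the orthogonal Brownian motion $B$ of \cref{lem:cond}. The drift term in \eqref{eq:decomp} is nonpositive, so by \cref{lem:cev} $X_t\le\bar X_t$ on $\{t<\zeta\}$. Hence on $\{X_T\ge m,\ \zeta>T\}$ we have $\bar X_t>0$ for all $t\le T$ and $\bar X_T\ge m$. With $\xi_t=-\tfrac{|\rho|}{\nu}(a_t-\alpha)$ and $M_t=B(V_t)$ this gives
\[
P(k)\le \E\Bigl[\mathsf K\bigl(a;\ B(V_t)>C(a_t-\alpha)-X_0/\orth\ \forall t\le T,\ \ \orth B(V_T)\ge m-X_0+\tfrac{|\rho|}{\nu}(a_T-\alpha)\bigr)\Bigr].
\]
Inside the kernel, $B$ is a standard Brownian motion. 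The task is therefore to bound a Gaussian moving-barrier probability for each fixed volatility path, and then integrate against the law of $a$.

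\emph{Discretization.} Fix a mesh $h>0$, later sent to $0$ after $k\to\infty$, and the levels $\alpha e^{jh}$, $j=1,\dots,J$, where $J$ is the last level reached by $\max_{t\le T}a_t$. Let $\tau_j$ be the first hitting time of level $j$, and $\Delta_j=\tau_j-\tau_{j-1}$. I keep only the finitely many constraints $B(V_{\tau_j})\ge C\alpha e^{jh}-c_1$ together with the terminal constraint. The terminal constraint forces $V_T\ge \orth^{-2}(m-X_0)^2$ up to Gaussian cost, since $B(V_T)\ge m/\orth-O(1)$ is needed. It also gives $V_{\tau_j}\le \sum_{i\le j}e^{2ih}\alpha^2\Delta_i$ and $V_T\le T\alpha^2e^{2(J+1)h}$. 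I then round $(J,\Delta_1,\dots,\Delta_J)$ to a grid with $e^{o(n^2)}$ cells. Paths with $Jh\ge (1+\delta)n$ are discarded via the Gaussian maximal bound on $Z$, which already costs more than $c_-k^2$. On each cell two estimates apply:
\begin{itemize}
\item[(a)] By the strong Markov property of $Z$ at the $\tau_j$ and the reflection bound, the volatility cost is at least $\sum_j h^2/(2\nu^2\Delta_j)$ up to $o(n^2)$.
\item[(b)] For a centred Gaussian vector $G$ and a closed convex set $\mathcal C\not\ni0$, $\Prob(G\in\mathcal C)\le \exp(-\tfrac12\inf_{x\in\mathcal C}x^\top\Sigma^{-1}x)$, by taking a supporting half-space at the minimizer. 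Applied to $(B(v_1),\dots,B(v_J),B(V_T))$, this bounds the conditional probability by $\exp(-\mathcal E)$, where $\mathcal E$ is the Cameron--Martin energy $\tfrac12\sum(x_j-x_{j-1})^2/(v_j-v_{j-1})$ minimized over $x_j\ge C\alpha e^{jh}-c_1$, $x_{J+1}\ge m/\orth-c_1$.
\end{itemize}
Because each $B(v)$-constraint is monotone in $v$ in the right direction, I can replace $v_j$ by its upper bound $\sum_{i\le j}\alpha^2e^{2ih}\Delta_i$.

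\emph{Deterministic inequality (expected main obstacle).} Summing over cells leaves a purely finite-dimensional claim, which is \cref{lem:A5}. Minimize, over $\Delta_j>0$ with $\sum\Delta_j\le T$, over $J$, and over admissible $x$, the quantity
\[
\sum_j \frac{h^2}{2\nu^2\Delta_j}+\mathcal E(x;v(\Delta));
\]
the claim is that the minimum is at least $(1-\epsilon(h))\,c_-k^2$. After rescaling, with $x_j=e^{jh}\cdot(\text{scale})$ and $jh=n\theta$, this becomes a variational problem in $\theta$. The expected optimizer is the exponential volatility path of \cref{sec:mech}, whose barrier cost $\ell^2/8$ per logarithmic time unit contributes $\rho^2c\,k^2/(1-\rho^2)$.

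The delicate point is to exclude a cheaper competitor in which $B$ is raised early, in large steps, instead of tracking the barrier. The energy is convex in $x$, but its linear-in-displacement part must be shown to be minimized by deferring each displacement as late as the barrier permits. This is the content of \cref{lem:defer}. I would prove it by an exchange argument on consecutive increments: moving a unit of displacement to a later interval with larger $v_j-v_{j-1}$ never increases the energy while the barrier stays satisfied. This reduces the problem to $x_j$ equal to the barrier, where an explicit Riemann-sum computation, followed by $h\to0$, gives $c_-$. Finally, taking $\liminf$ over $k$ and then letting $\delta,h\to0$ yields \eqref{eq:upper}.
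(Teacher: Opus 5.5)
Your overall architecture (drift removal, conditioning on $a$, a level-hitting skeleton, the reflection bound for passage times, the Gaussian convex-set bound, and binning into $e^{o(n^2)}$ cells) is the paper's. However, the step you flag as the main obstacle fails as proposed. Your exchange argument is false for the quadratic Cameron--Martin energy. Take two intervals of operational length $s_1=1$, $s_2=2$, an inactive intermediate threshold $\ell_1\le0$, and a terminal threshold $H>0$. Full deferral ($x_1=0$) gives $\sum_i(x_i-x_{i-1})^2/s_i=H^2/2$, whereas $x_1=H/3$ gives $H^2/3$, even though the later interval is longer. In general, for fixed operational times the minimizer of \eqref{eq:QP} is a taut string through the constraint points, and it can leave the barrier early. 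So at fixed $(\Delta_j)$ you cannot reduce to $x_j$ on the barrier. The barrier-tracking energy is only an \emph{upper} bound for the QP, and your Riemann-sum computation along it gives the correct constant without proving a lower bound. Note also that $\mathcal E$ is purely quadratic, so the ``linear-in-displacement part'' you invoke does not exist until something else produces it.

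In the paper, that linear part comes from coupling with the time allocation. With $A_j:=\alpha e^{jh}$ and $\Delta v_j\le e^{2h}A_{j-1}^2\Delta_j$, both costs on the $j$-th interval scale like $1/\Delta_j$:
\[
\frac{h^2}{2\nu^2\Delta_j}+\frac{(\Delta x_j)^2}{2e^{2h}A_{j-1}^2\Delta_j}=\frac{z_j}{\Delta_j},
\qquad
z_j:=\frac{h^2}{2\nu^2}+\frac{e^{-2h}(\Delta x_j)^2}{2A_{j-1}^2}.
\]
Cauchy--Schwarz with $\sum_j\Delta_j\le T$ then bounds the sum below by $(\sum_j\sqrt{z_j})^2/T$, which is a length rather than an energy. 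For every unit vector $(u,v)$,
\[
\sqrt{z_j}\ge \frac{uh}{\sqrt2\,\nu}+\frac{v\,e^{-h}|\Delta x_j|}{\sqrt2\,A_{j-1}}.
\]
Only this linear functional, with weights $1/A_{j-1}$ decreasing in $j$ and increasing thresholds, is minimized by maximal deferral. The paper proves this by summation by parts in \cref{lem:defer}; an exchange argument would also work at this stage. The result is $\sum_j|\Delta x_j|/A_{j-1}\ge C(e^h-1)(J-O(1/h))$. Optimizing over $(u,v)$ gives $\sqrt{1/(2\nu^2)+C^2/2}$ per unit of $Jh$, hence $c_-$ when $Jh\ge(1-\varepsilon')n$. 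The case $Jh<(1-\varepsilon')n$ needs the separate observation that the terminal threshold then forces an exponent of order $e^{2\varepsilon'n}$.

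Separately, your volatility cap is miscalibrated. The reflection bound gives only $\Prob(Jh\ge(1+\delta)n)\le e^{-(1+\delta)^2ck^2}$, and this is not $\le e^{-c_-k^2}$ for small $\delta$. The cap must therefore be a fixed multiple $r\,n$ with $r^2\ge(1-\rho^2)^{-1}$. It serves only to bound the number of cells, must not be sent to a limit, and the energy inequality must cover every $J$ below it.
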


Conditionally on volatility, removing the nonpositive drift turns survival into a
Brownian moving-barrier event. We discretize the volatility excursion by level-hitting
times, convert each conditional event into a convex quadratic energy, bound the combined
volatility and boundary-crossing cost from below, and sum over the resulting
configurations. Throughout this section $\rho\in(-1,0)$.

\subsection{Drift Removal}
The singular drift is nonpositive before absorption. Dropping it enlarges the joint
survival-and-terminal event and isolates the Gaussian constraints that determine the
upper bound.

\begin{lemma}[Drift Removal]\label{lem:A1}
$\{X_T\ge m,\ \zeta>T\}\subseteq\{\bar X_T\ge m,\ \inf_{t\le T}\bar X_t>0\}$.
\end{lemma}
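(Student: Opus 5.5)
The plan is a pathwise comparison on the full-measure event $\Omega_{\mathrm{CEV}}$ of \cref{lem:cev}, after discarding a null set. Fix $\omega\in\Omega_{\mathrm{CEV}}\cap\{X_T\ge m,\ \zeta>T\}$. Since $\zeta>T$, every $t\in[0,T]$ satisfies $t<\zeta$. Hence \eqref{eq:decomp} holds on all of $[0,T]$, and we can write
\[
X_t=\bar X_t-b\int_0^t\frac{a_s^2}{X_s}\,ds,\qquad t\in[0,T],
\]
with $\bar X$ as in \eqref{eq:Xbar}.

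The key step is the sign of the drift term. On $[0,T]$ with $T<\zeta$, the path $t\mapsto F_t$ is continuous and strictly positive. So $X_t=F_t^{1-\beta}/(1-\beta)$ is continuous and positive on the compact interval $[0,T]$, and therefore bounded below by some $\delta(\omega)>0$. Since $a$ is continuous, the integrand $a_s^2/X_s$ is continuous, bounded and nonnegative on $[0,T]$, and $b\ge0$ because $\beta\in(0,1)$. Thus the integral is finite and nonnegative for every $t\le T$, and $\bar X_t\ge X_t$ for all $t\in[0,T]$.

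Both conclusions then follow at once. First, $\bar X_T\ge X_T\ge m$. Second, $\inf_{t\le T}\bar X_t\ge\inf_{t\le T}X_t=\min_{t\le T}X_t>0$, where the minimum is attained by continuity on a compact interval. This gives the strict inequality, not just a nonnegative infimum.

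The main point needing care is measure-theoretic rather than analytic. The inclusion holds on $\Omega_{\mathrm{CEV}}$, so it holds up to a null set. Interpreted with respect to the complete probability space, this is exactly what is needed for the bound on $P(k)$, which by \cref{lem:cev} equals $\Prob(X_T\ge m,\ \zeta>T)$. I would state explicitly that the inclusion is understood modulo the null set $\Omega_{\mathrm{CEV}}^c$, or equivalently redefine $\bar X$ on that set. Continuous versions of $M$ and $\xi$ are used so that $\inf_{t\le T}\bar X_t$ is a genuine random variable.
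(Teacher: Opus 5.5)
Your proof is correct and follows the paper's argument: on survival the drift integrand $a^2/X$ is nonnegative, so $\bar X_t\ge X_t$ on $[0,T]$, which gives both $\bar X_T\ge m$ and a strictly positive infimum. Your compactness argument for $\inf_{t\le T}X_t>0$ and your remark that the inclusion holds modulo the null set $\Omega_{\mathrm{CEV}}^c$ only make explicit what the paper's one-line proof leaves implicit.
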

\begin{proof}
On survival $X>0$, so the drift $-b\,a^2/X$ is nonpositive and
$\bar X_t=X_t+b\int_0^t a^2/X\,ds\ge X_t$ for all $t\le T$.
\end{proof}

\subsection{A Gaussian Convex-Set Bound}
At the level-hitting times introduced below, the relaxed survival and terminal
requirements become linear inequalities for a Gaussian vector. The following projection
estimate converts their conditional probability into a Cameron--Martin energy.

\begin{lemma}[Gaussian Convex-Set Bound]\label{lem:A2}
Let $Y$ be a centred Gaussian vector in $\mathbb R^p$, $p\ge1$, with symmetric positive
definite covariance matrix $\Sigma$, and let $K\subset\mathbb R^p$ be nonempty, closed and
convex with $0\notin K$. Then
$\Prob(Y\in K)\le\overline\Phi\bigl(d_\Sigma(0,K)\bigr)\le e^{-\frac12 d_\Sigma(0,K)^2}$,
where $d_\Sigma$ is the Mahalanobis distance and $\overline\Phi:=1-\Phi$.
\end{lemma}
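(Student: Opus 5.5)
The plan is to whiten the Gaussian vector and reduce the estimate to a one-dimensional half-space bound via the supporting hyperplane at the nearest point. Set $G:=\Sigma^{-1/2}Y$. Then $G$ is a standard Gaussian vector in $\mathbb R^p$. The set $K':=\Sigma^{-1/2}K$ is again nonempty, closed and convex, and it does not contain $0$. By definition of the Mahalanobis distance, $d_\Sigma(0,K)=\inf_{x\in K}|\Sigma^{-1/2}x|=\operatorname{dist}(0,K')=:\delta$, where $\operatorname{dist}$ is the Euclidean distance. Since $K'$ is closed and does not contain the origin, $\delta>0$. Moreover, $\{Y\in K\}=\{G\in K'\}$.

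Next, use the Euclidean projection onto a closed convex set. There is a unique nearest point $x^*\in K'$ with $|x^*|=\delta$. The variational inequality $\langle x-x^*,x^*\rangle\ge0$ for all $x\in K'$ shows that $K'$ lies in the half-space
\[
H:=\{x:\langle x,e\rangle\ge\delta\},\qquad e:=x^*/\delta .
\]
Therefore $\Prob(G\in K')\le\Prob(\langle G,e\rangle\ge\delta)$. Because $e$ is a unit vector, $\langle G,e\rangle$ is a standard normal variable, and so this probability equals $\overline\Phi(\delta)$. That gives the first inequality.

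The second inequality is the standard Chernoff/Gaussian tail bound $\overline\Phi(x)\le e^{-x^2/2}$ for $x\ge0$. One way to see it is $\overline\Phi(x)\le \E e^{\lambda N-\lambda x}=e^{\lambda^2/2-\lambda x}$ with $\lambda=x$. Alternatively, one can note that $\tfrac12 e^{-x^2/2}-\overline\Phi(x)$ vanishes at $x=0$ and is nondecreasing for $x\ge0$, which even gives the constant $\tfrac12$.

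No step is a real obstacle. The only point needing care is that the infimum defining $d_\Sigma(0,K)$ is attained and is strictly positive. This follows from closedness of $K$ together with $0\notin K$, and it is what makes the half-space $H$ well defined.
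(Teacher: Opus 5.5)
Your proposal is correct and is essentially the paper's proof: whiten via $\Sigma^{-1/2}$, take the Euclidean projection of $0$ onto $K'=\Sigma^{-1/2}K$, enclose $K'$ in the supporting half-space at the nearest point, and use that $\langle G,e\rangle$ is standard normal. Your Chernoff argument for the second inequality is also fine. One small slip is in your optional aside: $\tfrac12e^{-x^2/2}-\overline\Phi(x)$ is not nondecreasing on $[0,\infty)$, because its derivative $e^{-x^2/2}\bigl(1/\sqrt{2\pi}-x/2\bigr)$ changes sign at $x=\sqrt{2/\pi}$. The function is still nonnegative, since it increases from $0$ and then decreases to the limit $0$, so the bound with constant $\tfrac12$ survives with that corrected justification.
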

\begin{proof}
Write $Y=\Sigma^{1/2}G$ with $G$ standard Gaussian and $K'=\Sigma^{-1/2}K$. Let $y^*$ be
the projection of $0$ onto $K'$, so that $|y^*|=d_\Sigma(0,K)$. The supporting hyperplane
at $y^*$ gives $K'\subseteq\{x:\langle x,y^*/|y^*|\rangle\ge|y^*|\}$, and
$\langle G,y^*/|y^*|\rangle\sim N(0,1)$.
\end{proof}

Apply the lemma to $B$ at $\sigma(a)$-measurable operational times
$0<v_1<\dots<v_J\le V_T$ with thresholds $(h_j)$ and a terminal threshold $H$ at $V_T$.
Put $v_0=0$, $v_{J+1}=V_T$, $\ell_j=h_j$ for $1\le j\le J$, and $\ell_{J+1}=H$. For
$s\ge0$ define the extended-real increment energy
\[
\mathcal E_s(x):=
\begin{cases}
x^2/s,&s>0,\\
0,&s=0\text{ and }x=0,\\
+\infty,&s=0\text{ and }x\ne0.
\end{cases}
\]
The constraint set is a nonempty intersection of half-spaces, and the Markov structure of
the Brownian covariance identifies its squared Mahalanobis distance with the
moving-barrier quadratic program
\begin{equation}\label{eq:QP}
\mathrm{QP}(v,\ell)
:=\min_{\substack{x_0,\ldots,x_{J+1}\in\mathbb R\\
x_0=0,\ x_i\ge\ell_i\ (1\le i\le J+1)}}
\sum_{i=1}^{J+1}\mathcal E_{v_i-v_{i-1}}(x_i-x_{i-1}),
\end{equation}
the Cameron--Martin energy of the cheapest interpolation through the constrained nodes.
In the applications below $H>0$ for all sufficiently large $k$, so the constraint set
does not contain the origin. If $v_J=V_T$, the zero-increment convention forces
$x_{J+1}=x_J$; this amounts to merging the coincident nodes with the single threshold
$\max(h_J,H)$, and \cref{lem:A2} is applied after this merge, so that its covariance
matrix is positive definite. The same formula with no intermediate nodes covers the
terminal-only case $J=0$.

\subsection{Level-Hitting Skeleton}
To retain sharp costs with finitely many constraints, we discretize the increase of
log-volatility by successive level-hitting times. The same skeleton controls the
available quadratic variation, the induced survival thresholds, and the probability of
completing each level crossing quickly.

Fix $\eta\in(0,\tfrac12)$ and set $A_j:=\alpha e^{j\eta}$,
$\tau_j:=\inf\{t: a_t\ge A_j\}$ (so $\tau_0=0$), $J^*:=\max\{j:\tau_j\le T\}$,
$\Delta t_j:=\tau_j-\tau_{j-1}$ and $v_j:=V_{\tau_j}$. Along this skeleton the levels
control the available quadratic variation, and survival becomes a finite list of
threshold conditions at the nodes.

\begin{lemma}[Skeleton Constraints]\label{lem:A3}
\textup{(i)} On $[\tau_{j-1},\tau_j)$, $\sup a<A_j$; hence
\begin{equation*}
\Delta v_j:=v_j-v_{j-1}\le A_j^2\Delta t_j=e^{2\eta}A_{j-1}^2\Delta t_j,\qquad
V_T\le \bar V:=2e^{2\eta}A_{J^*}^2 T.
\end{equation*}

\textup{(ii)} On $\{\bar X_T\ge m,\ \inf_{t\le T}\bar X_t>0\}$ one has
$B(v_j)>h_j:=C(A_j-\alpha)-X_0/\orth$ for $1\le j\le J^*$, and
$B(V_T)\ge H:=(m-X_0)/\orth+C(a_T-\alpha)\ge(m-X_0)/\orth-C\alpha$.
\end{lemma}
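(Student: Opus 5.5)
Part (i) follows from the definition of the hitting times and continuity of $a$. Part (ii) is read off directly from the decomposition \eqref{eq:Xbar}, using $\rho<0$ to write $\xi_t=-\orth C(a_t-\alpha)$.

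\emph{Part (i).} By continuity of $a$ and the definition $\tau_j=\inf\{t:a_t\ge A_j\}$, we have $a_t<A_j$ for $t\in[\tau_{j-1},\tau_j)$. Therefore
\[
\Delta v_j=\int_{\tau_{j-1}}^{\tau_j}a_s^2\,ds\le A_j^2\Delta t_j=e^{2\eta}A_{j-1}^2\Delta t_j .
\]
For $V_T$, note that on $[0,T]$ we have $\sup a<A_{J^*+1}=e^\eta A_{J^*}$, since $\tau_{J^*+1}>T$. (If $\tau_{J^*+1}=\infty$ the inequality $a<A_{J^*+1}$ holds on all of $[0,T]$; the level $A_{J^*+1}$ is not attained on $[0,T]$ by maximality of $J^*$.) This gives $V_T\le e^{2\eta}A_{J^*}^2T\le\bar V$. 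The factor $2$ in $\bar V$ is slack.

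One point needs care: $J^*$ must be finite almost surely. This holds because $a$ is continuous and hence bounded on $[0,T]$, and I would simply note it.

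\emph{Part (ii).} On the event $\{\bar X_T\ge m,\ \inf_{t\le T}\bar X_t>0\}$, for $\rho<0$ we write $\rho/\nu=-\orth C$. Using $M_t=B(V_t)$, this gives
\[
\bar X_t=X_0-\orth C(a_t-\alpha)+\orth B(V_t).
\]
\begin{itemize}
\item \emph{Intermediate nodes.} At $t=\tau_j\le T$ with $1\le j\le J^*$, continuity gives $a_{\tau_j}=A_j$, and $V_{\tau_j}=v_j$. Dividing $\bar X_{\tau_j}>0$ by $\orth>0$ yields $B(v_j)>C(A_j-\alpha)-X_0/\orth=h_j$.
\item \emph{Terminal node.} Dividing $\bar X_T\ge m$ by $\orth$ gives $B(V_T)\ge (m-X_0)/\orth+C(a_T-\alpha)=H$. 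Since $a_T>0$, we have $C(a_T-\alpha)\ge -C\alpha$, which gives the final lower bound on $H$.
\end{itemize}

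Two measure-theoretic points need attention. First, the identity $M_{\tau_j}=B(v_j)$ must hold pathwise on a full-measure event. This follows from the definition $B(v)=M_{t(v)}$ together with strict monotonicity of $V$ (since $a>0$), which gives $t(V_t)=t$. Second, the thresholds are $\sigma(a)$-measurable; this is recorded in \cref{app:meas}. No step is a real obstacle; the only delicate point is the boundary case $\tau_j=T$, where $v_j=V_T$. This case is harmless for the inequalities themselves, and the later merge convention handles it.
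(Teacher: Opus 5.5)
Your proof is correct and follows the paper's argument essentially verbatim: (i) from the first-passage definition together with $a<A_{J^*+1}=e^{\eta}A_{J^*}$ on $[0,T]$, and (ii) by writing $\xi_t=-C\orth(a_t-\alpha)$ and reading off $\bar X_{\tau_j}>0$ (using $a_{\tau_j}=A_j$) and $\bar X_T\ge m$. Your pointwise formulation $a_t<A_j$ on $[\tau_{j-1},\tau_j)$ is the right one and is all the integral bound needs; when $\tau_j<\infty$, continuity makes the supremum over that half-open interval equal to $A_j$, so the statement's ``$\sup a<A_j$'' should be read pointwise.
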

\begin{proof}
(i) is immediate from the definition of first passage; the cap on $V_T$ holds because
\begin{equation*}
a<A_{J^*+1}=e^{\eta}A_{J^*}\qquad\text{on }[0,T].
\end{equation*}
(ii) Since $a_{\tau_j}=A_j$ by continuity and
$\xi_t=-C\orth(a_t-\alpha)$, the inequality $\bar X_{\tau_j}>0$ reads
$\orth B(v_j)>C\orth(A_j-\alpha)-X_0$, and $\bar X_T\ge m$ reads
$\orth B(V_T)\ge m-X_0-\xi_T$.
\end{proof}

Conversely, crossing a level quickly is costly for the volatility, as the reflection
principle shows.

\begin{lemma}[Level-Hitting-Time Bound]\label{lem:A4}
On $\{\tau_{j-1}<\infty\}$, for every $s>0$,
\begin{equation*}
\Prob\bigl(\tau_j<\infty,\ \tau_j\le\tau_{j-1}+s\,\big|\,\mathcal F_{\tau_{j-1}}\bigr)\ \le\ 2\exp\Bigl(-\frac{\eta^2}{2\nu^2 s}\Bigr).
\end{equation*}
Consequently, for any thresholds $\bar t_1,\dots,\bar t_J>0$, by iterated conditioning,
\begin{equation*}
\Prob(\Delta t_j\le\bar t_j\ \text{for all } j\le J)\ \le\ 2^{J}\exp\Bigl(-\sum_{j\le J}\frac{\eta^2}{2\nu^2\bar t_j}\Bigr).
\end{equation*}
\end{lemma}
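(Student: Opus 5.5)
The plan is to reduce the level crossing to a one-dimensional Brownian first-passage estimate, using the strong Markov property at $\tau_{j-1}$. On $\{\tau_{j-1}<\infty\}$, continuity gives $a_{\tau_{j-1}}=A_{j-1}$. Since $\ln a_t=\ln\alpha+\nu Z_t-\tfrac12\nu^2t$, the event $\{\tau_j\le\tau_{j-1}+s\}$ is the event that the process
\[
Y_u:=\nu(Z_{\tau_{j-1}+u}-Z_{\tau_{j-1}})-\tfrac12\nu^2u
\]
reaches level $\eta$ for some $u\le s$. For $j=1$ we have $\tau_0=0$ and this is immediate; for $j\ge2$, note that $\tau_{j-1}$ is a stopping time of the filtration.

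The first step is to remove the drift. Because $-\tfrac12\nu^2u\le0$, we have $Y_u\le\nu\tilde Z_u$, where $\tilde Z_u:=Z_{\tau_{j-1}+u}-Z_{\tau_{j-1}}$. By the strong Markov property, $\tilde Z$ is a standard Brownian motion independent of $\mathcal F_{\tau_{j-1}}$ on $\{\tau_{j-1}<\infty\}$. Hence the conditional probability is at most
\[
\Prob\Bigl(\sup_{u\le s}\tilde Z_u\ge\eta/\nu\Bigr).
\]
The reflection principle gives $\Prob(\sup_{u\le s}\tilde Z_u\ge x)=2\overline\Phi(x/\sqrt s)$, and the standard bound $\overline\Phi(y)\le e^{-y^2/2}$ then yields $2\exp(-\eta^2/(2\nu^2s))$. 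The restriction to $\tau_j<\infty$ only shrinks the event, so it is harmless.

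For the product bound, I would induct on $J$. On the event $\{\Delta t_i\le\bar t_i \text{ for all } i\le J\}$, every $\tau_i$ with $i\le J$ is finite. Write the event as
\[
\{\Delta t_i\le \bar t_i \text{ for } i<J\}\cap\{\tau_J<\infty,\ \tau_J\le\tau_{J-1}+\bar t_J\}.
\]
The first factor is $\mathcal F_{\tau_{J-1}}$-measurable and contained in $\{\tau_{J-1}<\infty\}$. Condition on $\mathcal F_{\tau_{J-1}}$, apply the one-step bound with $s=\bar t_J$, and use the induction hypothesis. This multiplies the factors and produces $2^J\exp(-\sum_j\eta^2/(2\nu^2\bar t_j))$.

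There is no real obstacle here. The only points needing care are the measurability of the earlier crossing events with respect to $\mathcal F_{\tau_{J-1}}$, and the conventions for $\tau=\infty$. Both are routine, since the $\tau_i$ are increasing stopping times.
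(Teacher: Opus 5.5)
Your proposal is correct and follows the paper's route: the strong Markov property at $\tau_{j-1}$, dropping the nonpositive drift $-\tfrac12\nu^2u$ so that the crossing forces $\sup_{u\le s}\nu\tilde Z_u\ge\eta$, then the reflection principle with the Gaussian tail bound. Your induction, which uses that $\{\Delta t_i\le\bar t_i,\ i<J\}$ is $\mathcal F_{\tau_{J-1}}$-measurable and contained in $\{\tau_{J-1}<\infty\}$, simply spells out the paper's ``iterated conditioning''.
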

\begin{proof}
By the strong Markov property, from $\tau_{j-1}$ the log-volatility runs as
$\nu B'_u-\nu^2u/2$ for a fresh Brownian motion $B'$; rising by $\eta$ within time $s$
forces $\sup_{u\le s}\nu B'_u\ge\eta$, and the reflection principle gives the bound.
\end{proof}

\subsection{Deferred Displacement and the Energy Inequality}
The preceding reductions leave a deterministic question: can a path bring its combined
volatility and Gaussian cost below $c_-k^2$, by redistributing the volatility increase or
by raising the Gaussian coordinate before the barrier requires it? The next two lemmas
show that it cannot. The key input is an elementary summation-by-parts bound for the part
of the Gaussian cost that is linear in the displacement.

\begin{lemma}[Deferred Displacement]\label{lem:defer}
Let $w_1\ge w_2\ge\dots\ge w_J\ge0$ and $h_1\le h_2\le\dots\le h_J$, write
$h_j^+:=\max(h_j,0)$, and put $h_0^+:=0$. For all $x_0=0,x_1,\dots,x_J\in\mathbb R$ with
$x_j\ge h_j$ for $1\le j\le J$,
\[
\sum_{j=1}^J w_j\,|x_j-x_{j-1}|\;\ge\;\sum_{j=1}^J w_j\,\bigl(h_j^+-h_{j-1}^+\bigr),
\]
with equality for the maximally deferred choice $x_j=h_j^+$.
\end{lemma}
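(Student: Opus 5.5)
The plan is a summation by parts argument that reduces the weighted total variation to a weighted sum of running maxima. We then compare that running maximum with the positive part of the barrier.

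\emph{Step 1: replace the increments by the running maximum.} Put $y_j:=\max(0,x_1,\dots,x_j)$, with $y_0:=0$. Then $y_j-y_{j-1}\ge 0$. Moreover $y_j-y_{j-1}\le |x_j-x_{j-1}|$: if $y_j>y_{j-1}$, then $y_j=x_j$ and $x_{j-1}\le y_{j-1}$, so $x_j-x_{j-1}\ge y_j-y_{j-1}$. Since $w_j\ge0$, this gives
\[
\sum_j w_j|x_j-x_{j-1}|\ \ge\ \sum_j w_j(y_j-y_{j-1}).
\]

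\emph{Step 2: Abel summation.} With $w_{J+1}:=0$,
\[
\sum_{j=1}^J w_j(y_j-y_{j-1})=\sum_{j=1}^J (w_j-w_{j+1})\,y_j ,
\]
using $y_0=0$. Every coefficient $w_j-w_{j+1}$ is nonnegative, by monotonicity of the weights and $w_J\ge0$. So it suffices to check $y_j\ge h_j^+$ for each $j$. This holds because $y_j\ge\max(0,x_j)\ge\max(0,h_j)$.

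Summing back by parts gives
\[
\sum_j (w_j-w_{j+1})h_j^+=\sum_j w_j(h_j^+-h_{j-1}^+),
\]
again using $h_0^+=0$. This proves the inequality.

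\emph{Step 3: equality.} For $x_j=h_j^+$, the constraint $x_j\ge h_j$ holds. Because the $h_j$ are nondecreasing, so are the $h_j^+$, and $h_1^+\ge 0=x_0$. Hence every increment $x_j-x_{j-1}=h_j^+-h_{j-1}^+$ is nonnegative, and the left side equals the right side term by term.

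The only delicate point is Step 1, in particular the case where the path first goes negative. This is handled by including $0$ in the running maximum, and it is exactly why the bound uses $h_j^+$ rather than $h_j$. The monotonicity of $h$ is used only in the equality case; the monotonicity of $w$ is what makes the Abel coefficients nonnegative.
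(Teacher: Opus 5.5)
Your proof is correct and follows the paper's argument: the paper likewise bounds the weighted variation below by $\sum_j w_j(U_j-U_{j-1})$ for a nondecreasing sequence with $U_0=0$ and $U_j\ge h_j^+$, then applies the same Abel summation with $w_{J+1}:=0$ and the same term-by-term equality check. The only difference is the auxiliary sequence. The paper takes the cumulative positive variation $U_j=\sum_{i\le j}(x_i-x_{i-1})^+$, for which the increment bound $U_j-U_{j-1}\le|x_j-x_{j-1}|$ is immediate. Your running maximum $y_j$ instead needs the short case check you give.
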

\begin{proof}
Let $U_0:=0$ and $U_j:=\sum_{i\le j}(x_i-x_{i-1})^+$. Then $U_j\ge x_j\ge h_j$ and
$U_j\ge0$, so $U_j\ge h_j^+$. Summation by parts with $w_{J+1}:=0$ gives
\[
\begin{aligned}
\sum_{j=1}^J w_j|x_j-x_{j-1}|
&\ \ge\ \sum_{j=1}^J w_j(U_j-U_{j-1})
=\sum_{j=1}^J (w_j-w_{j+1})\,U_j\\
&\ \ge\ \sum_{j=1}^J (w_j-w_{j+1})\,h_j^+
=\sum_{j=1}^J w_j\bigl(h_j^+-h_{j-1}^+\bigr),
\end{aligned}
\]
since $w_j-w_{j+1}\ge0$. For $x_j=h_j^+$ the increments are nonnegative and both
inequalities are equalities.
\end{proof}

With this bound for the linear part, the combined volatility and Gaussian energy can be
bounded below uniformly over all allocations of time and quadratic variation.

\begin{lemma}[Energy Inequality]\label{lem:A5}
For every $\varepsilon_0\in(0,1)$ and $\varepsilon'\in(0,\tfrac14)$ there are
$\eta_0\in(0,\tfrac12)$ and $\kappa_0\in(0,1)$, depending only on $\varepsilon_0$ and the
model parameters, with the following property. For every $\eta\in(0,\eta_0]$ and
$\kappa\in(0,\kappa_0]$ there is $k_0=k_0(\varepsilon_0,\varepsilon',\eta,\kappa)$ such
that the estimate below holds for every $k\ge k_0$, every integer $J\ge1$, and every
admissible choice of increments.

Let $\Delta t_j>0$ with $\sum_{j=1}^J\Delta t_j\le(1+\kappa)T$, let
$\Delta v_j\in(0,e^{2\eta}A_{j-1}^2\Delta t_j]$, and append a terminal increment
$\Delta v_\infty\in[0,\bar V]$, where $\bar V:=2e^{2\eta}A_J^2T$. In \eqref{eq:QP} take
the operational-time increments $(\Delta v_1,\ldots,\Delta v_J,\Delta v_\infty)$, the
thresholds $h_j$ of \cref{lem:A3}, and a terminal threshold $H$ with
\begin{equation*}
H\ge\frac{m-X_0}{\orth}-C\alpha.
\end{equation*}
Then
\[
\sum_{j\le J}\frac{\eta^2}{2\nu^2\Delta t_j}\;+\;\tfrac12\,\mathrm{QP}
\;\ge\;(1-\varepsilon')^2(1-\varepsilon_0)\,c_-k^2.
\]
\end{lemma}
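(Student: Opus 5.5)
Set $n=(1-\beta)k$, $\ln m=n+O(1)$, and let $L:=J\eta$ denote the log-volatility reached by the skeleton. The plan is to lower-bound the two costs separately in terms of $L$ and the time allocation, then optimize over $L$. There are two regimes. If $L\le(1-\delta)n$, then $A_J\le\alpha e^{(1-\delta)n}$, so $\bar V\lesssim e^{2(1-\delta)n}$. The total operational time is $\sum\Delta v_j+\Delta v_\infty\lesssim e^{2(1-\delta)n}$, while the terminal threshold is $H\gtrsim e^{n}$. The energy $\mathrm{QP}$ is at least the terminal-only energy $H^2/(\text{total operational time})\ge e^{2\delta n}$, which is superquadratic in $k$, so the bound is trivial there. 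It therefore suffices to treat $L\ge(1-\delta)n$.

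\emph{Volatility cost.} By Cauchy--Schwarz, $\sum_j\eta^2/(2\nu^2\Delta t_j)\ge (J\eta)^2/(2\nu^2\sum\Delta t_j)\ge L^2/(2\nu^2(1+\kappa)T)$. This alone gives $c k^2$ up to $(1-\delta)^2/(1+\kappa)$. The extra factor $1/(1-\rho^2)$ must come from a weighted interplay with $\mathrm{QP}$, so I will keep the $\Delta t_j$ free rather than apply Cauchy--Schwarz immediately.

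\emph{Gaussian cost.} In \eqref{eq:QP} the thresholds $h_j=C\alpha(e^{j\eta}-1)-X_0/\orth$ are increasing, and $\Delta v_j\le e^{2\eta}A_{j-1}^2\Delta t_j$. The plan is to linearize each energy term: for any weights $\lambda_j\ge0$, $x^2/s\ge 2\lambda_j|x|-\lambda_j^2 s$. Then
\[
\tfrac12\mathrm{QP}\ \ge\ \sum_j\Bigl(\lambda_j|x_j-x_{j-1}|-\tfrac12\lambda_j^2\Delta v_j\Bigr)+(\text{terminal term}).
\]
The weights are chosen decreasing in $j$, roughly $\lambda_j\approx\theta/A_{j}$ with a tuning parameter $\theta$ (times a slowly varying correction). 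This is exactly the setting of \cref{lem:defer}: the linear part is minimized by the deferred choice $x_j=h_j^+$, giving $\sum_j\lambda_j(h_j^+-h_{j-1}^+)\approx\sum_j(\theta/A_j)\,C A_j(1-e^{-\eta})\approx C\theta L$. The subtracted term becomes $\tfrac12\theta^2e^{2\eta}\sum_j\Delta t_j$ times a constant close to $1$. The terminal increment is handled in the same way with weight $\lambda_\infty\approx\theta/A_J$ and threshold $H-h_J^+\approx(m-CA_J)/\orth$. Alternatively one can argue that raising $x$ from $h_J$ to $H$ within $\Delta v_\infty\le\bar V$ is cheap or expensive according as $L$ exceeds $n$; I expect the optimal configuration to have $L\approx n$ with $CA_J\approx m$ up to constants.

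\emph{Optimization.} Combining the two bounds, for each fixed $\theta$ the total is at least
\[
\sum_j\Bigl(\frac{\eta^2}{2\nu^2\Delta t_j}-\tfrac12\theta^2(1+o(1))\Delta t_j\Bigr)+C\theta L(1-O(\eta)).
\]
Minimizing each summand in $\Delta t_j$ is awkward because of the sign, so instead I will first apply Cauchy--Schwarz with total time $\tau\le(1+\kappa)T$. This gives
\[
L^2/(2\nu^2\tau)-\tfrac12\theta^2\tau+C\theta L,
\]
to be maximized over $\theta$ and then minimized over $\tau\le(1+\kappa)T$ and $L\ge(1-\delta)n$. With $\theta=CL/\tau$, the sum becomes $\frac{L^2}{2\tau}\bigl(\nu^{-2}+C^2\bigr)=\frac{L^2}{2\nu^2\tau(1-\rho^2)}$, since $1+\rho^2/(1-\rho^2)=1/(1-\rho^2)$. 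This expression is decreasing in $\tau$, so its minimum is at $\tau=(1+\kappa)T$, and then $L\ge(1-\delta)n$ yields $(1-\delta)^2c_-k^2/(1+\kappa)$. Choosing $\delta$, $\kappa_0$ and $\eta_0$ small in terms of $\varepsilon_0$ and $\varepsilon'$ absorbs all these losses.

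\emph{Main obstacle.} The main obstacle is making the weight choice legitimate uniformly. The weights $\lambda_j$ must be nonincreasing, as \cref{lem:defer} requires. The error factors $e^{2\eta}$ and $(1-e^{-\eta})/\eta$ must be uniform in $J$. The $X_0/\orth$ shift and the $\alpha$ offsets in $h_j$ must contribute only $O(\theta)=O(k)$. In addition, $\theta$ must be allowed to depend on $L$ and $\tau$ only, not on the individual $\Delta t_j$; this is fine, since the linearization holds for any fixed choice of $\theta$.
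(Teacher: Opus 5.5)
Your argument is correct. It shares the paper's skeleton: the case split on $J\eta/n$, \cref{lem:defer} with weights proportional to $1/A$, and Cauchy--Schwarz against the budget $(1+\kappa)T$. Where it differs is in how the two costs are combined.

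The paper merges the costs interval by interval into $z_j/\Delta t_j$, with $z_j=\frac{\eta^2}{2\nu^2}+\frac{e^{-2\eta}(\Delta x_j)^2}{2A_{j-1}^2}$. It applies Cauchy--Schwarz once to the merged sum and only then linearizes, bounding $\sqrt{z_j}$ below along a unit direction $(u,v)$. The factor $(1-\rho^2)^{-1}=1+\nu^2C^2$ comes from $\sup_{u^2+v^2=1}$.

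You instead linearize the Gaussian energy alone, by the Fenchel bound $x^2/s\ge2\lambda|x|-\lambda^2s$ with $\lambda_j=\theta/A_j$. Because $\Delta v_j\le e^{2\eta}A_{j-1}^2\Delta t_j$ and $A_{j-1}/A_j=e^{-\eta}$, the rent is at most $\frac12\theta^2\Delta t_j$, with no $e^{2\eta}$ loss. It therefore sums to $\frac12\theta^2\tau$ however the time is allocated. Cauchy--Schwarz is then needed only for the volatility cost, and the multiplier $\theta=C(1-e^{-\eta})J_{\mathrm{bind}}/\tau$ plays the role of the paper's direction.

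Your version has a transparent Lagrangian reading, with $\theta$ the shadow price of operational time. Your small-$J\eta$ case, via $\mathrm{QP}\ge H^2/(\text{total operational time})$, is also slightly cleaner than the paper's two-subcase split. The paper's version, in turn, needs no configuration-dependent multiplier and only one Cauchy--Schwarz step.

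There are three details to tidy.

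\begin{itemize}
\item In the main case, simply drop the terminal summand, which is nonnegative. Linearizing it with $\lambda_\infty\approx\theta/A_J$, as you suggest, would subtract up to $\frac12\lambda_\infty^2\bar V=\theta^2e^{2\eta}T$. That rent is not charged to the time budget and is large enough to spoil the constant.
\item Take your $\delta$ equal to $\varepsilon'$, so that $\eta_0$ and $\kappa_0$ depend on $\varepsilon_0$ only, as the statement requires.
\item The offset loss $O(\theta)$ is not $O(k)$ when $\tau$ is small, since $\theta$ is then large. It is harmless anyway if you keep it inside the count $J_{\mathrm{bind}}\ge J-O(1/\eta)$ of steps with $h_{j-1}\ge0$ before optimizing in $\theta$. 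There it costs only a relative factor $1-O(1/n)$, uniformly in $\tau$ and $J$.
\end{itemize}
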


\begin{proof}
$\mathrm{QP}$ is nonincreasing in each operational-time increment, so we may replace
$\Delta v_j$ by its upper bound and the terminal increment by $\bar V$; this remains valid
when the actual terminal increment is zero, because the convention in \eqref{eq:QP} is
the lower-semicontinuous limit as that increment decreases to zero. Write the QP variables
through their increments $\Delta x_j:=x_j-x_{j-1}$ ($j\le J$) and
$\Delta x_\infty:=x_{J+1}-x_J$, so that $x_j\ge h_j$ and $x_J+\Delta x_\infty\ge H$. Per
interval,
\[
\frac{\eta^2}{2\nu^2\Delta t_j}+\frac{(\Delta x_j)^2}{2e^{2\eta}A_{j-1}^2\Delta t_j}
=\frac{z_j}{\Delta t_j},\qquad
z_j:=\frac{\eta^2}{2\nu^2}+\frac{e^{-2\eta}(\Delta x_j)^2}{2A_{j-1}^2},
\]
and by Cauchy--Schwarz
$\sum_j z_j/\Delta t_j\ge(\sum_j\sqrt{z_j})^2/\sum_j\Delta t_j\ge(\sum_j\sqrt{z_j})^2/((1+\kappa)T)$;
the terminal term $(\Delta x_\infty)^2/(2\bar V)$ is kept aside as a separate nonnegative
summand. Put $r:=J\eta/n$. All $O$-terms below are uniform in $J$ and in the admissible
increments; the coefficient of $O(\eta+\kappa)$ is uniform for small $\eta,\kappa$, and
once $\eta,\kappa$ are fixed the coefficient of $O(1/n)$ does not depend on $k$.

\emph{Main case $r\ge1-\varepsilon'$.} Drop the terminal term. For any $u,v\ge0$ with
$u^2+v^2=1$,
$\sqrt{z_j}\ge u\,\eta/(\sqrt2\nu)+v\,e^{-\eta}|\Delta x_j|/(\sqrt2 A_{j-1})$. The weights
$1/A_{j-1}$ decrease and the thresholds $h_j$ increase, so \cref{lem:defer} gives
\[
\sum_{j\le J}\frac{|\Delta x_j|}{A_{j-1}}\ \ge\ \sum_{j\le J}\frac{h_j^+-h_{j-1}^+}{A_{j-1}}\ \ge\ C(e^\eta-1)\,J_{\mathrm{bind}},
\qquad J_{\mathrm{bind}}:=\#\{j\le J:h_{j-1}\ge0\},
\]
where $h_0:=-X_0/\orth$ continues the formula of \cref{lem:A3}; indeed
$(h_j-h_{j-1})/A_{j-1}=C(e^\eta-1)$ whenever $h_{j-1}\ge0$. If $A_b:=\alpha+X_0/(C\orth)$
denotes the level at which $h$ becomes nonnegative, then
$J_{\mathrm{bind}}\ge J-\eta^{-1}\ln(A_b/\alpha)-1$. Raising the Gaussian coordinate
earlier than the barrier requires can therefore only increase this sum. Hence
\[
\sum_j\sqrt{z_j}\ \ge\ (J\eta)\Bigl[\frac{u}{\sqrt2\,\nu}+\frac{vC}{\sqrt2}\bigl(1-O(\eta)-O(1/n)\bigr)\Bigr],
\]
and optimizing the direction,
$\sup_{u^2+v^2=1}[\,\cdot\,]=\sqrt{1/(2\nu^2)+C^2/2}\,(1-O(\eta)-O(1/n))$. With
$J\eta=rn\ge(1-\varepsilon')n$ and $1+\nu^2C^2=(1-\rho^2)^{-1}$,
\[
\begin{aligned}
\frac{(\sum_j\sqrt{z_j})^2}{(1+\kappa)T}
&\ \ge\ (1-\varepsilon')^2\,\frac{n^2}{2\nu^2T(1-\rho^2)}\bigl(1-O(\eta+\kappa)-O(1/n)\bigr)\\
&\ =\ (1-\varepsilon')^2c_-k^2\bigl(1-O(\eta+\kappa)-O(1/n)\bigr).
\end{aligned}
\]

\emph{Case $r<1-\varepsilon'$.} Here $H\ge m/(2\orth)$ for large $k$. Since
$x_J+\Delta x_\infty\ge H$, either $\Delta x_\infty\ge H/2$ or
$x_J=\sum_{j\le J}\Delta x_j\ge H/2$. In the former case
\[
\frac{(\Delta x_\infty)^2}{2\bar V}\ \ge\ \frac{m^2}{64\,\orth^2e^{2\eta}A_J^2T}\ \ge\ \frac{e^{2(1-r)n}}{c_3T}\ \ge\ \frac{e^{2\varepsilon' n}}{c_3T};
\]
in the latter, taking $(u,v)=(0,1)$,
$\sum_j\sqrt{z_j}\ge (e^{-\eta}/\sqrt2)(H/2)/A_J\ge e^{(1-r)n}/c_4$, and squaring gives at
least $e^{2\varepsilon'n}/(c_5T)$. Both dominate $c_-k^2$ for $k\ge k_0$.

Finally choose $\eta_0$ and $\kappa_0$ so that the $O(\eta+\kappa)$ loss in the main
case is at most $\varepsilon_0/2$. For each fixed $\eta\le\eta_0$ and
$\kappa\le\kappa_0$, enlarge $k_0$ so that the $O(1/n)$ loss is at most
$\varepsilon_0/2$, $H\ge m/(2\orth)$, and the exponential bounds of the second case
dominate $c_-k^2$. This gives the factor $1-\varepsilon_0$ uniformly over $J$ and all
admissible increments.
\end{proof}

The proof never identifies the minimizer of the aggregated functional, which is not
obtained by exact barrier tracking: the directional bound and \cref{lem:defer} apply
to all feasible allocations simultaneously.

\subsection{Assembly}\label{sec:assembly}
It remains to sum the cellwise bounds without losing the quadratic exponent. We first
remove exceptionally high volatility levels and very short passage times, then bin the
remaining passage times so that \cref{lem:A5} applies uniformly at a combinatorial
cost of only $e^{o(n^2)}$.

\begin{proof}[Proof of \cref{prop:upper}]
By \cref{lem:cev,lem:A1},
$P(k)\le\Prob(\bar X_T\ge m,\ \inf_{t\le T}\bar X_t>0)$. Fix
$\varepsilon_0,\varepsilon',\eta,\kappa$ in the ranges of \cref{lem:A5}; let
$r_{\mathrm{cap}}:=\lceil(1-\rho^2)^{-1/2}\rceil+1$,
$J_{\max}:=\lceil r_{\mathrm{cap}}n/\eta\rceil$,
$L:=\lceil\ln(n^{10})/\ln(1+\kappa)\rceil$, and define time bins
$\mathcal B_i:=(T(1+\kappa)^{-i-1},\,T(1+\kappa)^{-i}]$ for $i<L$ and
$\mathcal B_L:=(0,\,T(1+\kappa)^{-L}]$.

\emph{(a) High volatility.}
$\{J^*\ge J_{\max}\}\subseteq\{\sup_{t\le T}(\nu Z_t-\nu^2t/2)\ge r_{\mathrm{cap}}n-\eta\}$,
so by the reflection principle its probability is at most
$\exp(-(r_{\mathrm{cap}}n-\eta)^2/(2\nu^2T))\le e^{-c_-k^2}$ for large $k$, since
$r_{\mathrm{cap}}^2\ge(1-\rho^2)^{-1}$.

\emph{(b) No level crossing, $J^*=0$.} Then $a<\alpha e^{\eta}$ on $[0,T]$,
$V_T\le\alpha^2e^{2\eta}T$, and the one-node case of \cref{lem:A2} bounds the
conditional probability by $\exp(-c_6 m^2)$.

\emph{(c) Configurations.} For $1\le J^*<J_{\max}$, partition according to the
configuration and write the upper end of the bin of $\Delta t_j$ as
\begin{equation*}
\mathbf i=(J^*,(i_j)_{j\le J^*}),\qquad \bar t_j:=\sup\mathcal B_{i_j},
\end{equation*}
where $i_j$ is the bin containing $\Delta t_j$. If some $i_j=L$,
\cref{lem:A4} alone bounds the probability by
$2\exp(-\eta^2(1+\kappa)^L/(2\nu^2T))\le e^{-n^9}$ for large $k$. The number of
configurations is at most
$\sum_{J<J_{\max}}(L+1)^{J}\le\exp\bigl(O\bigl(\tfrac n\eta(\ln\ln n+\ln\tfrac1\kappa)\bigr)\bigr)=e^{o(n^2)}$,
so these cells contribute at most $e^{-n^9+o(n^2)}$. Otherwise
$\bar t_j\le(1+\kappa)\Delta t_j$, hence $\sum_j\bar t_j\le(1+\kappa)T$, and on each
cell the volatility cost is at most $2^{J^*}\exp(-\sum_j\eta^2/(2\nu^2\bar t_j))$ by
\cref{lem:A4} with thresholds $\bar t_j$, while the conditional Gaussian cost is at
most $\exp(-\frac12\mathrm{QP})$ by \cref{lem:A2,lem:A3}, with the
operational-time increments bounded through $\bar t_j$ (larger increments decrease
$\mathrm{QP}$). Their product is bounded by \cref{lem:A5} with
$\Delta t_j:=\bar t_j$.

\emph{(d) Sum.} Combining,
\[
P(k)\ \le\ e^{-c_-k^2}+e^{-c_6m^2}+e^{-n^{8}}+e^{o(n^2)}\, e^{-(1-\varepsilon')^2(1-\varepsilon_0)c_-k^2}.
\]
Letting $k\to\infty$ and then $\varepsilon_0,\varepsilon'\downarrow0$, with fixed
admissible $\eta,\kappa$ for each $\varepsilon_0$, proves the proposition.
\end{proof}

\section{Lower Bound for Negative Correlation}\label{sec:lower}

We now construct surviving paths that produce the tail at the rate $c_-$, which gives the
reverse inequality.

\begin{proposition}[Lower Bound, $\rho<0$]\label{prop:lower}
For $\rho\in(-1,0)$,
\[
\limsup_{k\to\infty}\frac{-\ln P(k)}{k^2}\;\le\;c_-.
\]
\end{proposition}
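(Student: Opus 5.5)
We prove the lower bound by exhibiting an explicit family of surviving paths whose probability is at least $e^{-c_-k^2(1+o(1))}$. By \cref{lem:cev} it suffices to bound $\Prob(X_T\ge m,\ \zeta>T)$ from below, and we work with the decomposition \eqref{eq:decomp} conditionally on the volatility path, using \cref{lem:cond}. The construction has three layers, matching the mechanism of \cref{sec:mech}: a volatility tube, a Brownian moving-barrier event in operational time, and control of the singular drift.

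\emph{Step 1: volatility tube.} Fix small $\delta>0$ and a target log-level $(1+\delta)n$ for the volatility. Use the straight-line exponential profile $\ln(a_t/\alpha)\approx \lambda t$ with $\lambda=(1+\delta)n/T$ on $[0,T]$, and require the log-volatility to stay within a tube of fixed width $\epsilon$ around this line. By Girsanov (a constant drift $\lambda/\nu$ for $Z$) together with the positive probability that a Brownian motion stays in a fixed tube, the tube event has probability at least $\exp(-(1+\delta)^2n^2/(2\nu^2T)-O(n))$, which is $e^{-(1+\delta)^2ck^2+o(k^2)}$. On the tube, $V_t$ is comparable to $a_t^2/(2\lambda)$ up to factors $e^{O(\epsilon)}$ for $t$ bounded away from $0$. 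Hence the survival barrier $C(a_t-\alpha)-X_0/\orth$ of \cref{lem:A3} is dominated by $\ell(v):=C'\sqrt{2\lambda v}$ with $C'=Ce^{O(\epsilon)}$, and $V_T\asymp e^{2(1+\delta)n}/n$.

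\emph{Step 2: Brownian barrier event.} Conditionally on a tube path, we need $B(v)\ge (1+\theta)\ell(v)+1$ for $v\in[v_*,V_T]$ (with some margin $\theta$), $B>-X_0/(2\orth)$ on the initial short window, and $B(V_T)\ge H$. Here $H\lesssim m/\orth$ is negligible compared with $\ell(V_T)\asymp e^{(1+\delta)n}$, so the terminal condition comes for free once the barrier is held. Pass to the Ornstein--Uhlenbeck coordinate $U(s)=e^{-s/2}B(e^s)$. In this coordinate the barrier becomes the constant level $L=(1+\theta)C'\sqrt{2\lambda}$, so that $L^2\approx 2(1+\theta)^2C^2(1+\delta)n/T$. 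The logarithmic-time window has length $S\approx 2(1+\delta)n$. We lower-bound the probability that $U$ first climbs to $L$ (cost $O(L^2)=O(n)$, negligible) and then stays in $[L,L+1]$ for time $S$. A Girsanov tilt pinning $U$ near $L$ costs $\tfrac12\int (L/2)^2ds=L^2S/8$ up to $O(L\sqrt S+S)$ errors. This can be verified via the explicit OU ground state, or by tilting $B$ with drift $\ell'(v)$ and computing $\int\ell'(v)^2dv/2=\lambda C'^2\ln(V_T/v_*)/2$. Either way the cost is $(1+O(\theta+\epsilon+\delta))\,\rho^2/(1-\rho^2)\cdot c k^2$, using $C^2\nu^2=\rho^2/(1-\rho^2)$. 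Adding this to the cost of Step 1 gives $c+\rho^2c/(1-\rho^2)=c_-$.

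\emph{Step 3: the singular drift (main obstacle).} The drift $-b\int a^2/X\,ds$ must not destroy survival or the terminal bound. On the constructed event we have $\bar X_t\ge \orth\theta\ell(V_t)\gtrsim\theta a_t\sqrt n$ after the start-up window, so heuristically $\int a^2/X\,ds\lesssim\int a_s\,ds/(\theta\sqrt n)\lesssim a_t/(\lambda\theta\sqrt n)$, which is $o(\theta\ell)$. A bootstrap closes this. Let $\sigma$ be the first time $X_t<\bar X_t/2$. Before $\sigma$, the drift integral is at most $2b\int a^2/\bar X$, and this is bounded by $\bar X_t/4$ for large $n$, so $\sigma>T$. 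The delicate part is the initial window $[0,v_*]$, where $a$ is $O(1)$ and the barrier gives no margin. There we additionally require $B$ to rise to a fixed positive level quickly, at a cost of $O(1)$ probability. With that requirement, $X$ stays above a fixed constant on $[0,t_*]$ by a direct Gronwall-type estimate. Finally we let $k\to\infty$ and then $\theta,\epsilon,\delta\downarrow0$. Since the constructed event has positive probability, this also yields the positivity $P(k)>0$ required by \cref{lem:transfer}.
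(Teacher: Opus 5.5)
Your proposal is correct and is essentially the paper's proof. It has the same ingredients: a Girsanov tube around an overshooting exponential volatility path, domination of the survival barrier by a square-root profile in operational time, and the change $U(s)=e^{-s/2}B(e^s)$ that turns this barrier into a constant level $L$, held at cost $L^2/8$ per unit of logarithmic time over a window of length $\approx2(1+\delta)n$. It also uses a separate start-up window and a bootstrap at the first time $X<\bar X/2$. The paper differs only in using an explicit entry segment and concatenating blocks of logarithmic length in $[\tfrac12,1]$ rather than one global tilt, which affects only $o(k^2)$ terms.

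A few slips should be corrected, none of which changes the rate:
\begin{itemize}
\item The alternative drift-tilt energy is $\tfrac12\int\ell'^2\,dv=\tfrac14\lambda C'^2\ln(V_T/v_*)$, not $\tfrac12\lambda C'^2\ln(V_T/v_*)$. Only the former agrees with $L^2S/8$; the latter would overstate the barrier cost by a factor of $2$.
\item Since $2\lambda V_t\approx a_t^2$ on the tube, $\ell(V_t)\asymp a_t$, so the margin is $\bar X_t\gtrsim\theta a_t$ without the factor $\sqrt n$. The bootstrap still closes, because $\int_0^t a_s\,ds\lesssim a_t/\lambda$ with $\lambda\asymp n$.
\item Because $v_*\asymp1/n$, the initial rise of $B$ to the level $(1+\theta)\ell(v_*)\asymp1$ costs $e^{-O(n)}$, not $O(1)$; this is still negligible at the $k^2$ scale.
\item The additive $+1$ in your barrier becomes $e^{-s/2}\asymp\sqrt n$ near $s_*$ in Ornstein--Uhlenbeck coordinates, so holding $U$ in $[L,L+1]$ does not imply it. Simply drop it: $\ell$ is increasing, so the margin $\theta\ell(v)\ge\theta\ell(v_*)$ is already bounded below by a positive constant on $[v_*,V_T]$.
\end{itemize}
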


The construction separates the two leading costs. A Girsanov change of measure confines
volatility to a tube around an exponential path; conditionally on that tube, the
orthogonal Brownian motion enters and stays above the induced square-root barrier
(\cref{fig:mechanism}). A bootstrap then shows that the chosen margins absorb the
singular drift and imply the terminal event. Throughout this section $\rho\in(-1,0)$.

Fix $\varepsilon_1,\varepsilon_2,\delta\in(0,\tfrac14)$, set
$\theta:=n(1+\varepsilon_1)/(\nu T)$ and $a^*(t):=\alpha e^{\nu\theta t}$, and let
$V^*(t):=\int_0^t a^*(s)^2ds$. The exact identity
$a^*(t)=\sqrt{2\nu\theta V^*(t)+\alpha^2}$ shows that \emph{the exponential path is a
shifted square-root profile in its own quadratic-variation time}.

\subsection{Volatility Tube}
We first confine volatility near the exponential path suggested by the leading
variational balance. The logarithmic tube below has the unconstrained quadratic cost and
distorts the volatility level and its quadratic-variation clock only by fixed factors.

\begin{lemma}[Volatility Tube]\label{lem:tube}
For $\theta>0$ and $\delta\in(0,1)$ let $a^*(t)=\alpha e^{\nu\theta t}$ and
\begin{equation*}
A_{\mathrm{vol}}:=\Bigl\{\sup_{t\le T}|\ln a_t-\ln a^*(t)|\le\delta\Bigr\}.
\end{equation*}
Then
\[
\Prob(A_{\mathrm{vol}})\ \ge\ p(\delta)\,\exp\Bigl(-\frac{(\theta+\nu/2)^2T}{2}-\frac{(\theta+\nu/2)\,\delta}{\nu}\Bigr),
\qquad p(\delta):=\Prob\Bigl(\sup_{t\le T}|\nu Z_t|\le\delta\Bigr)>0.
\]
On $A_{\mathrm{vol}}$, $e^{-\delta}a^*(t)\le a_t\le e^{\delta}a^*(t)$ and
$e^{-2\delta}V^*(t)\le V_t\le e^{2\delta}V^*(t)$ for all $t\le T$.
\end{lemma}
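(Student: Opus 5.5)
The plan is a Girsanov change of measure that turns the exponential path into the typical path, followed by a Cauchy--Schwarz/Jensen-type bound on the density over the tube. Write $\ln a_t-\ln\alpha=\nu Z_t-\tfrac12\nu^2t$ and $\ln a^*(t)-\ln\alpha=\nu\theta t$. Then
\[
A_{\mathrm{vol}}=\Bigl\{\sup_{t\le T}\bigl|\nu\bigl(Z_t-(\theta+\tfrac{\nu}{2})t\bigr)\bigr|\le\delta\Bigr\}.
\]
Set $\lambda:=\theta+\nu/2$. Define $\mathbb Q$ by $d\mathbb Q/d\Prob=\exp(\lambda Z_T-\tfrac12\lambda^2T)$ on $\mathcal F_T$. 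By Girsanov, $\tilde Z_t:=Z_t-\lambda t$ is a $\mathbb Q$-Brownian motion, and $A_{\mathrm{vol}}=\{\sup_{t\le T}|\nu\tilde Z_t|\le\delta\}$. Hence $\mathbb Q(A_{\mathrm{vol}})=p(\delta)$. Positivity of $p(\delta)$ is the standard small-ball fact: Brownian motion stays in any open tube around $0$ with positive probability, for instance via the explicit series for the exit time from a strip, or via Markov chaining over short intervals.

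Next, reverse the measure change:
\[
\Prob(A_{\mathrm{vol}})=\E_{\mathbb Q}\bigl[\mathbf 1_{A_{\mathrm{vol}}}\exp(-\lambda Z_T+\tfrac12\lambda^2T)\bigr].
\]
Substitute $Z_T=\tilde Z_T+\lambda T$ to get the exponent $-\lambda\tilde Z_T-\tfrac12\lambda^2T$. On $A_{\mathrm{vol}}$ we have $|\tilde Z_T|\le\delta/\nu$, so the integrand is at least $\exp(-\lambda\delta/\nu-\tfrac12\lambda^2T)$. This pointwise bound gives exactly the claimed estimate, with no Jensen step needed.

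The deterministic consequences are immediate. The tube condition gives $e^{-\delta}a^*\le a\le e^{\delta}a^*$ pointwise. Squaring and integrating over $[0,t]$ gives $e^{-2\delta}V^*(t)\le V_t\le e^{2\delta}V^*(t)$.

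There is no real obstacle here. The only point needing care is justifying $p(\delta)>0$, and I would simply cite the small-ball property of Brownian motion. Note also that $\lambda>0$ is used only to ensure that $|\lambda\tilde Z_T|\le\lambda\delta/\nu$.
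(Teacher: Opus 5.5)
Your proposal is correct and is essentially the paper's proof: a Girsanov shift by $\theta+\nu/2$ turns the tube into a small-ball event for a Brownian motion, so its $\mathbb Q$-probability is exactly $p(\delta)$. The density $d\Prob/d\mathbb Q=\exp(-\lambda\tilde Z_T-\tfrac12\lambda^2T)$ is then bounded below pointwise using $|\tilde Z_T|\le\delta/\nu$ on the tube. As you note yourself, the Cauchy--Schwarz/Jensen step mentioned in your opening plan is never needed.
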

\begin{proof}
Let $\mu:=\theta+\nu/2$, and let $\widetilde\Prob$ be the measure, given by Girsanov's
theorem, under which $\widetilde Z_t:=Z_t-\mu t$ is a Brownian motion. The extra $\nu/2$
absorbs the It\^o term, so that exactly
\begin{equation*}
\ln a_t=\ln\alpha+\nu Z_t-\tfrac12\nu^2t=\ln a^*(t)+\nu\widetilde Z_t.
\end{equation*}
Hence $\widetilde\Prob(A_{\mathrm{vol}})=p(\delta)$, and on $A_{\mathrm{vol}}$, where
$|\nu\widetilde Z_T|\le\delta$,
\begin{equation*}
\frac{d\Prob}{d\widetilde\Prob}=\exp\Bigl(-\mu\widetilde Z_T-\frac{\mu^2T}{2}\Bigr)\ \ge\
\exp\Bigl(-\frac{\mu\delta}{\nu}-\frac{\mu^2T}{2}\Bigr).
\end{equation*}
The bounds on $a_t$ and $V_t$ are immediate.
\end{proof}

With $\theta=n(1+\varepsilon_1)/(\nu T)$ the lemma gives
\begin{equation}\label{eq:tube}
\Prob(A_{\mathrm{vol}})\ \ge\ \exp\Bigl(-\frac{n^2(1+\varepsilon_1)^2}{2\nu^2T}-O(n)\Bigr).
\end{equation}

\subsection{Target Event for the Orthogonal Component}
Conditionally on a volatility path in the tube, the orthogonal Brownian motion must keep
$\bar X$ positive before the barrier becomes active, stay above the moving barrier
afterwards, and leave enough terminal margin to reach $m$ once the singular drift is
restored.

Let $\hat a(v):=a_{t(v)}$ denote volatility in operational time. Set
$c_*:=\nu X_0/(2(1+\varepsilon_2)|\rho|)$ and let $v_b$ be the first operational time at
which $\hat a$ exceeds $\alpha+c_*$; since $\hat a(0)=\alpha$ and $\hat a$ is continuous,
$v_b>0$. Define
\[
\begin{aligned}
E_B:= {}&\Bigl\{B(v)\ge(1+\varepsilon_2)C\hat a(v)\ \forall v\in[v_b,V_T]\Bigr\}
\cap\Bigl\{B\ge-\tfrac{X_0}{2\orth}\text{ on }[0,v_b]\Bigr\}\\
&{}\cap\Bigl\{B(V_T)\ge\tfrac{1+\varepsilon_2}{\orth}\bigl(m+\tfrac{|\rho|}{\nu}a_T\bigr)\Bigr\}.
\end{aligned}
\]
On the tube, with $v_\alpha:=\alpha^2/(2\nu\theta)$, the identity
$a^*(t)=\sqrt{2\nu\theta V^*(t)+\alpha^2}$ and \cref{lem:tube} give the two-sided
bound
\begin{equation}\label{eq:ahat-sqrt}
e^{-2\delta}\sqrt{2\nu\theta\,(v+v_\alpha)}\ \le\ \hat a(v)\ \le\ e^{2\delta}\sqrt{2\nu\theta\,(v+v_\alpha)},
\end{equation}
so the moving barrier $(1+\varepsilon_2)C\hat a$ is dominated by
$\lambda_+\sqrt{v+v_\alpha}$, where $\lambda_+:=(1+\varepsilon_2)e^{2\delta}C\sqrt{2\nu\theta}$.
Evaluating \eqref{eq:ahat-sqrt} at $v_b$ shows $c_{\mathrm b,1}/\theta\le v_b\le c_{\mathrm b,2}/\theta$
with $c_{\mathrm b,1}>0$, provided $\delta<\delta_0:=\tfrac12\ln(1+c_*/\alpha)$, which we
assume henceforth. The constants of this section may depend on the fixed $\delta$, which
is harmless because $\delta\downarrow0$ only after $k\to\infty$.

\subsection{Bootstrap Control of the Singular Drift}
The slack factors in $E_B$ were chosen to dominate the accumulated singular drift. The
next lemma turns the tube-and-barrier event into a surviving path that reaches the
terminal target.

\begin{lemma}[Bootstrap]\label{lem:B3}
On $A_{\mathrm{vol}}\cap E_B$, for all $k$ large (given $\varepsilon_1,\varepsilon_2,\delta$),
$\zeta>T$ and $X_T\ge m$.
\end{lemma}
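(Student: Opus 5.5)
We run a continuity (bootstrap) argument on the stopping time at which $X$ first falls below a fixed fraction of the drift-free process $\bar X$. From \cref{lem:cev}, before $\zeta$ we have $X_t=\bar X_t-D_t$ with $D_t:=b\int_0^t a_s^2/X_s\,ds\ge0$. Set
\[
\sigma:=\inf\{t\le T:\ X_t\le \bar X_t/(1+\varepsilon_2)\}\wedge\zeta\wedge T .
\]
The goal is to show that on $A_{\mathrm{vol}}\cap E_B$ the process $\bar X$ is bounded below by a positive, explicitly controlled profile on $[0,T]$. Then, as long as $t<\sigma$, the drift $D_t$ is tiny compared with the margin $\varepsilon_2\bar X_t/(1+\varepsilon_2)$. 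Continuity will then force $\sigma=T$ with $X_T>\bar X_T/(1+\varepsilon_2)>0$, and hence $\zeta>T$.

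\emph{Step 1: lower bounds for $\bar X$ on the event.} Recall $\bar X_t=X_0-C\orth(a_t-\alpha)+\orth B(V_t)$. On $[0,v_b]$ in operational time we have $\hat a\le\alpha+c_*$ and $B\ge-X_0/(2\orth)$. Therefore
\[
\bar X\ge X_0-\tfrac{\nu X_0 C\orth}{2(1+\varepsilon_2)|\rho|}-\tfrac{X_0}{2}=\tfrac{X_0}{2}-\tfrac{X_0}{2(1+\varepsilon_2)}\ge c\,\varepsilon_2X_0,
\]
using $C\orth=|\rho|/\nu$. On $[v_b,V_T]$ the barrier gives $\orth B\ge(1+\varepsilon_2)C\orth\hat a$, so $\bar X\ge X_0+C\orth\alpha+\varepsilon_2C\orth\hat a\ge \varepsilon_2(|\rho|/\nu)\hat a$. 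In both regimes $\bar X_t\ge c_1\varepsilon_2\max(1,a_t)$. At $T$, the terminal clause of $E_B$ gives $\bar X_T\ge(1+\varepsilon_2)m+O(1)$.

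\emph{Step 2: drift bound before $\sigma$.} For $t<\sigma$ we have $X_s\ge \bar X_s/(1+\varepsilon_2)\ge c_2\varepsilon_2\max(1,a_s)$. Hence
\[
D_t\le \frac{b}{c_2\varepsilon_2}\int_0^t\frac{a_s^2}{\max(1,a_s)}\,ds\le c_3\int_0^t a_s\,ds .
\]
On the tube, $a_s\le e^{\delta}\alpha e^{\nu\theta s}$, so $\int_0^t a\le e^{\delta}a^*(t)/(\nu\theta)\le e^{2\delta}a_t/(\nu\theta)$. Thus $D_t=O(a_t/\theta)=O(a_t/n)$, and it is also $O(1/n)$ when $a_t$ is bounded. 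This is the main point: the drift $\int a^2/X$ grows like $a/\theta$, because the integral of an exponential is dominated by its endpoint divided by the rate. Since $\theta\asymp n\to\infty$, it is negligible against the margin $\varepsilon_2\bar X_t/(1+\varepsilon_2)\gtrsim\varepsilon_2^2\max(1,a_t)$.

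\emph{Step 3: closing the bootstrap.} For $k$ large, Step 2 gives $X_t=\bar X_t-D_t>\bar X_t/(1+\varepsilon_2)$ strictly on $[0,\sigma)$, uniformly with a gap. By continuity of $X$ and $\bar X$, which holds as long as $X$ stays away from $0$, this rules out $\sigma<T$ and $\sigma=\zeta\le T$. Hence $\zeta>T$. For the terminal value, note that $\bar X_T\ge\frac{1+\varepsilon_2}{1}m+X_0+C\orth\alpha-\text{(bounded)}$, so $X_T\ge\bar X_T-O(a_T/n)$. Here $a_T\le e^{\delta}\alpha e^{n(1+\varepsilon_1)}$, which could exceed $m\asymp e^n$. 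The same term, however, appears with a positive sign in $\bar X_T$: the terminal clause includes $+\tfrac{1+\varepsilon_2}{\orth}\tfrac{|\rho|}{\nu}a_T$, which yields $\bar X_T\ge(1+\varepsilon_2)m+\varepsilon_2(|\rho|/\nu)a_T-O(1)$. Subtracting $D_T=O(a_T/n)$ then leaves $X_T\ge m$ for large $k$. The hardest step is this bookkeeping in Steps 2 and 3, ensuring that every drift contribution is $O(a/n)$ and is dominated by the $\varepsilon_2 a$ slack built into $E_B$; the constants may depend on $\delta,\varepsilon_2$, but not on $k$.
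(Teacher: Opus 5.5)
Your proposal is correct and follows the paper's proof essentially step for step. It uses the same two-regime lower bound on $\bar X$ (before and after $v_b$) and the same continuity argument at the first time $X$ falls below a fixed fraction of $\bar X$; you take the fraction $1/(1+\varepsilon_2)$ where the paper takes $1/2$, and your unified bound $c_1\varepsilon_2\max(1,a_t)$ is only a repackaging of the paper's two-branch minimum. The drift estimate $D_t\lesssim\int_0^t a_s\,ds\le e^{2\delta}a_t/(\nu\theta)$ is the same, and so is the terminal step, where the $a_T$ term in the terminal clause of $E_B$ absorbs $D_T=o(a_T)$.
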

\begin{proof}
Recall $\bar X=X_0+\xi+\orth B(V)$ from \eqref{eq:Xbar}, and work on
$A_{\mathrm{vol}}\cap E_B$.

\emph{Positivity of $\bar X$.} For $v\ge v_b$,
$\orth B\ge(1+\varepsilon_2)\frac{|\rho|}{\nu}\hat a$, so
\[
\bar X\ge X_0-\frac{|\rho|}{\nu}(\hat a-\alpha)+(1+\varepsilon_2)\frac{|\rho|}{\nu}\hat a
=X_0+\frac{|\rho|}{\nu}(\varepsilon_2\hat a+\alpha)\ge\varepsilon_2\frac{|\rho|}{\nu}\hat a.
\]
For $v\le v_b$, $\hat a\le\alpha+c_*$ gives
$\xi=-\tfrac{|\rho|}{\nu}(\hat a-\alpha)\ge-\tfrac{|\rho|}{\nu}c_*=-\tfrac{X_0}{2(1+\varepsilon_2)}$,
and with $B\ge-X_0/(2\orth)$,
\[
\bar X\ge X_0-\frac{X_0}{2(1+\varepsilon_2)}-\frac{X_0}{2}=\frac{\varepsilon_2X_0}{2(1+\varepsilon_2)}>0.
\]

\emph{Drift control.} Let $\tau_{1/2}:=\inf\{t: X_t\le\bar X_t/2\}$. On
$[0,\tau_{1/2})$,
\[
X\ \ge\ \frac{\bar X}2\ \ge\ \frac12\min\Bigl(\frac{\varepsilon_2X_0}{2(1+\varepsilon_2)},\ \varepsilon_2\frac{|\rho|}{\nu}\hat a\Bigr);
\]
on the first branch $a\le\alpha+c_*$, so $a^2\le(\alpha+c_*)a$, and on the second
$a^2/\hat a=a$. Hence the drift integral satisfies
\[
D_t:=b\int_0^t\frac{a_s^2}{X_s}\,ds\le c_2(\varepsilon_2)\int_0^t a_s\,ds\le c_2(\varepsilon_2)\,\frac{e^{\delta}a^*(t)}{\nu\theta},
\]
so $D_t/\bar X_t\le c_3(\varepsilon_2,\delta)/\theta\to0$ uniformly in $t$. In particular
$D<\bar X/2$ throughout for large $k$, which is incompatible with $\tau_{1/2}\le T$. Hence
$X\ge\bar X/2>0$ on $[0,T]$, that is, $\zeta>T$.

\emph{Terminal value.} The terminal constraint in $E_B$ and the drift bound give, for
$k$ large,
\begin{equation*}
\begin{aligned}
\bar X_T&\ge(1+\varepsilon_2)m+\varepsilon_2\tfrac{|\rho|}{\nu}a_T+X_0+\tfrac{|\rho|}{\nu}\alpha,\\
D_T&\le c_2(\varepsilon_2)e^{2\delta}a_T/(\nu\theta)=o(a_T),\\
X_T&=\bar X_T-D_T\ge(1+\varepsilon_2)m+\varepsilon_2\tfrac{|\rho|}{\nu}a_T-o(a_T)\ge m.
\end{aligned}
\end{equation*}
In the first line the $\xi_T$ term cancels all but the $\varepsilon_2$-fraction of the
barrier margin; in the second, $D_T=o(a_T)$ because $\theta\to\infty$.
\end{proof}

\subsection{Barrier Tracking}\label{sec:hover}
It remains to show that $E_B$ has the required conditional probability. Under the
logarithmic change of operational time, a square-root Brownian barrier becomes a constant
barrier for a stationary Ornstein--Uhlenbeck process (\cref{fig:mechanism}(b)).
This representation does not control the initial entry, so the construction first
follows an explicit Cameron--Martin path segment and then concatenates
Ornstein--Uhlenbeck survival blocks.

Fix $\varepsilon_3\in(0,\tfrac14)$ and $L\ge L_0$, where $L_0=L_0(\text{model parameters})$
is chosen so that $v_b\le Lv_\alpha$ on the tube (possible since $v_b\le c_{\mathrm b,2}/\theta$
and $v_\alpha=\alpha^2/(2\nu\theta)$). Set
\[
\begin{aligned}
v_L&:=Lv_\alpha, & s_L&:=\ln v_L, & s_T&:=\ln V_T,\\
\lambda_L&:=\lambda_+\sqrt{1+1/L}, &
u_0&:=\lambda_L(1+\varepsilon_3), &
h_L&:=\lambda_+\sqrt{(L+1)v_\alpha}.
\end{aligned}
\]
Here $h_L=(1+\varepsilon_2)e^{2\delta}C\alpha\sqrt{L+1}$ and $u_0\sqrt{v_L}=(1+\varepsilon_3)h_L$
are constants independent of $k$, while $u_0\asymp\lambda_+\asymp\sqrt n$. We use the
constant barrier for $U$ only on $[v_L,V_T]$, where $v+v_\alpha\le(1+1/L)v$, and bridge
$[0,v_L]$ explicitly. Two elementary lemmas isolate the probabilistic inputs.

\begin{lemma}[Brownian Barrier Entry]\label{lem:entry}
Conditionally on a volatility path in the tube, for all $k\ge k_0(L,\varepsilon_3,\varepsilon_2,\delta,\varepsilon_1)$,
\[
\Prob\Bigl(B\ge-\tfrac{X_0}{2\orth}\ \text{on}\ [0,v_b];\ \ B\ge h_L\ \text{on}\ [v_b,v_L];\ \ B(v_L)\in\bigl[u_0\sqrt{v_L},(u_0+1)\sqrt{v_L}\bigr]\Bigr)\ \ge\ e^{-c_L\lambda_+^2},
\]
with $c_L=c(L,\varepsilon_3,\text{params})$.
\end{lemma}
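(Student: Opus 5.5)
Conditionally on the volatility path, $B$ is a standard Brownian motion on $[0,V_T]$ by \cref{lem:cond}, and the thresholds $v_b$, $v_L$, $h_L$, $u_0$ are deterministic given the path. The plan is therefore a purely Gaussian estimate for a fixed path in the tube. We use the Cameron--Martin theorem with an explicit piecewise-linear drift path $\phi$ on $[0,v_L]$. Take $\phi(0)=0$, $\phi$ linear up to $v_b$ with $\phi(v_b)=2h_L$, and $\phi$ linear on $[v_b,v_L]$ up to $\phi(v_L)=(u_0+\tfrac12)\sqrt{v_L}$. Then require $B-\phi$ to stay in a small tube of width $w$ around zero, where $w$ is a fixed multiple of $\min(X_0/(2\orth),h_L)$ and of the terminal window $\tfrac12\sqrt{v_L}$.

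The key scaling facts come from the tube bounds. We have $v_b\asymp1/\theta$, $v_L=Lv_\alpha\asymp1/\theta$, and $h_L$, $u_0\sqrt{v_L}$ are constants independent of $k$. The Cameron--Martin energy of $\phi$ is therefore
\[
\int_0^{v_L}\dot\phi^2\,dv\;\lesssim\;\frac{h_L^2}{v_b}+\frac{u_0^2v_L}{v_L-v_b}\;\asymp\;\theta\;\asymp\;\lambda_+^2 .
\]
Here we need $v_L-v_b\gtrsim v_\alpha$, which follows by taking $L\ge L_0+1$, say, since $v_b\le L_0v_\alpha$. On the small-tube event the three requirements hold:
\begin{itemize}
\item on $[0,v_b]$, $B\ge\phi-w\ge-w\ge -X_0/(2\orth)$;
\item on $[v_b,v_L]$, $B\ge 2h_L-w\ge h_L$, since $\phi$ is increasing there provided $(u_0+\tfrac12)\sqrt{v_L}\ge 2h_L$, which holds because $u_0\sqrt{v_L}=(1+\varepsilon_3)h_L$ and we may enlarge slack constants;
\item at $v_L$, $|B(v_L)-(u_0+\tfrac12)\sqrt{v_L}|\le w\le\tfrac12\sqrt{v_L}$.
\end{itemize}
One caveat concerns the middle requirement. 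If $2h_L$ exceeds the terminal value, we instead set $\phi(v_b)=h_L+w$ and keep $\phi$ monotone, choosing $w\le \varepsilon_3 h_L/4$; either way only constant adjustments are involved.

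The Cameron--Martin lower bound gives
\[
\Prob(\sup_{[0,v_L]}|B-\phi|\le w)\;\ge\;\Prob\Bigl(\sup_{[0,v_L]}|B|\le w\Bigr)\exp\Bigl(-\tfrac12\!\int\dot\phi^2-\sqrt{\textstyle\int\dot\phi^2}\,\cdot O(1)\Bigr)
\]
in the standard symmetric-set form, using Anderson/symmetry so that the linear term is handled by averaging $\pm$. The small-ball factor needs care, because $v_L\to0$ while $w$ is constant: $w/\sqrt{v_L}\asymp\sqrt\theta\to\infty$, so the small-ball probability tends to $1$ and is harmless. The terminal window of width $\sqrt{v_L}$ is where $w$ must be of order $\sqrt{v_L}$, not constant. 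We therefore take $w=\varepsilon\sqrt{v_L}$ on the whole interval. The small-ball probability is then a constant $p(\varepsilon)>0$ by Brownian scaling, independent of $k$. Its logarithm is $O(1)\le\lambda_+^2$ for large $k$, since $\lambda_+^2\asymp\theta\to\infty$. Also $w=\varepsilon\sqrt{v_L}\to0$, so the constraints $-X_0/(2\orth)$ and the $h_L$ margin are satisfied for large $k$, which is where $k_0$ enters.

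The main obstacle is the bookkeeping. The energy must be $O(\theta)$ with a constant depending only on $L,\varepsilon_3$ and the parameters, uniformly over tube paths. This uses that $v_b$ and $v_L$ are both of exact order $1/\theta$, with $v_L-v_b$ bounded below by a multiple of $v_\alpha$, via \eqref{eq:ahat-sqrt}. The other requirement is that $\phi$ be defined from path-measurable quantities so the conditional statement is legitimate. Converting $\theta$ to $\lambda_+^2$ through $\lambda_+^2=(1+\varepsilon_2)^2e^{4\delta}C^2\,2\nu\theta$ then yields $e^{-c_L\lambda_+^2}$.
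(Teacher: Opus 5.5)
Your proposal is correct and is essentially the paper's argument: a Cameron--Martin shift along a piecewise-linear $\phi$, localization in a tube of width proportional to $\sqrt{v_L}$ (so Brownian scaling gives a $k$-independent small-ball factor), and energy $O(\theta)=O(\lambda_+^2)$ because $v_b$ and $v_L$ are both of order $1/\theta$. The paper simply lets $\phi$ reach $(u_0+\tfrac12)\sqrt{v_L}$ already at $v_b$ and keeps it constant on $[v_b,v_L]$. This removes both your need for $v_L-v_b\gtrsim v_\alpha$ and the monotonicity detour. Your claim $(u_0+\tfrac12)\sqrt{v_L}\ge 2h_L$ is false for large $k$, but it is harmless, since a linear $\phi$ between two endpoints above $h_L+w$ stays above $h_L+w$.
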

\begin{proof}
Let $\phi$ be piecewise linear: $\phi\equiv0$ on $[0,v_b/2]$, rising linearly to
$(u_0+\tfrac12)\sqrt{v_L}$ at $v_b$, and constant on $[v_b,v_L]$. On
$\{\sup_{v\le v_L}|B-\phi|\le\tfrac12\sqrt{v_L}\}$ all three constraints hold: on
$[0,v_b]$, $B\ge-\tfrac12\sqrt{v_L}\ge-X_0/(2\orth)$ for $k$ large (as $v_L\to0$); on
$[v_b,v_L]$, $B\ge u_0\sqrt{v_L}=(1+\varepsilon_3)h_L\ge h_L$; and
$B(v_L)\in[u_0\sqrt{v_L},(u_0+1)\sqrt{v_L}]$. The localized Cameron--Martin shift along
$\phi$ and Brownian scaling,
$\Prob\bigl(\sup_{v\le v_L}|\widetilde B|\le\tfrac12\sqrt{v_L}\bigr)=\Prob\bigl(\sup_{u\le1}|B_u|\le\tfrac12\bigr)=:p_*>0$,
give
\[
\Prob(\cdot)\ \ge\ p_*\exp\Bigl(-\frac{(u_0+\tfrac12)^2v_L}{v_b}-\frac{2(u_0+\tfrac12)\,v_L}{v_b}\Bigr)\ \ge\ e^{-c_L\lambda_+^2},
\]
since
\begin{equation*}
\frac12\int\dot\phi^2=\frac{(u_0+\frac12)^2v_L}{v_b},\qquad
\Bigl|\int\dot\phi\,d\widetilde B\Bigr|\le|\dot\phi|_\infty\sqrt{v_L}=\frac{2(u_0+\frac12)\,v_L}{v_b}
\end{equation*}
on the localization event, while $v_L/v_b\le Lv_\alpha\theta/c_{\mathrm b,1}$ is a constant
depending only on $L$ and the parameters, and $u_0\asymp\lambda_+$.
\end{proof}

After the entry, the path is carried through blocks of logarithmic length between
$\tfrac12$ and $1$; the next lemma gives the cost of a single block.

\begin{lemma}[Ornstein--Uhlenbeck Survival Block]\label{lem:block}
Let $U$ solve $dU=-\tfrac12U\,ds+dW$. For each $\varepsilon_3\in(0,\tfrac14)$ there exist
$p_1>0$ and $\lambda_0$ such that for all $\lambda\ge\lambda_0$, all durations
$\Delta s\in[\tfrac12,1]$, and all
$x\in I_\lambda:=[\lambda(1+\varepsilon_3),\lambda(1+\varepsilon_3)+1]$,
\[
\Prob_x\bigl(U_s\ge\lambda\ (0\le s\le\Delta s),\ U_{\Delta s}\in I_\lambda\bigr)\ \ge\ p_1\exp\Bigl(-\frac{\lambda^2(1+\varepsilon_3)^2}{8}\,\Delta s-\lambda^{3/2}\Bigr).
\]
\end{lemma}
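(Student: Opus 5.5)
We prove \cref{lem:block} by a Girsanov change of drift that shifts the Ornstein--Uhlenbeck mean to the level $\mu:=\lambda(1+\varepsilon_3)+\tfrac12$, the midpoint of $I_\lambda$. Under $\Prob_x$, write $U=\mu+Y$, so that $dY=-\tfrac12(\mu+Y)\,ds+dW$. Let $\widetilde\Prob_x$ be the measure under which $\widetilde W_s:=W_s-\tfrac12\mu s$ is a Brownian motion. Under $\widetilde\Prob_x$ the process $Y$ is a centred-drift OU process, $dY=-\tfrac12Y\,ds+d\widetilde W$, started at $y_0:=x-\mu\in[-\tfrac12,\tfrac12]$.

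Consider the localization event $G:=\{\sup_{s\le\Delta s}|Y_s|\le\tfrac12\}$ with the drift of $\widetilde W$ included. On $G$ we have $U_s\ge\mu-\tfrac12=\lambda(1+\varepsilon_3)\ge\lambda$ and $U_{\Delta s}\in I_\lambda$, so $G$ is contained in the target event. Since $Y$ is a time-homogeneous OU process independent of $\lambda$, compactness of $y_0\in[-\tfrac12,\tfrac12]$ and $\Delta s\in[\tfrac12,1]$ gives a uniform positive lower bound $\widetilde\Prob_x(G)\ge p_1'>0$. To get this bound cleanly, first shrink to $|Y|\le\tfrac14$ after a short initial time, or simply use continuity in $y_0$ of the support-theorem probability on this compact set.

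Next we control the density. Here
\[
\frac{d\Prob_x}{d\widetilde\Prob_x}=\exp\Bigl(-\int_0^{\Delta s}\tfrac12\mu\,d\widetilde W-\tfrac18\mu^2\Delta s\Bigr)
=\exp\Bigl(-\tfrac12\mu\,\widetilde W_{\Delta s}-\tfrac18\mu^2\Delta s\Bigr).
\]
On $G$ we have $\widetilde W_{\Delta s}=Y_{\Delta s}-y_0+\tfrac12\int_0^{\Delta s}Y\,ds$, so $|\widetilde W_{\Delta s}|\le\tfrac54$. Hence the density is at least $\exp(-\tfrac58\mu-\tfrac18\mu^2\Delta s)$ on $G$.

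Finally, $\mu^2\le\lambda^2(1+\varepsilon_3)^2+\lambda(1+\varepsilon_3)+\tfrac14$, so
\[
\tfrac18\mu^2\Delta s+\tfrac58\mu\le\tfrac18\lambda^2(1+\varepsilon_3)^2\Delta s+c\lambda+c'.
\]
For $\lambda\ge\lambda_0$ we have $c\lambda\le\lambda^{3/2}$, and $e^{-c'}$ is absorbed into $p_1$. This yields the stated bound with $p_1:=p_1'e^{-c'}$.

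The only delicate point is the uniformity of $\widetilde\Prob_x(G)$ over starting points at the edge $|y_0|=\tfrac12$. There the localization tube must be slightly wider: use $\sup|Y|\le\tfrac12$ with the tube centred at $0$ and start points inside, so positivity follows from the open-tube support theorem together with lower semicontinuity in $y_0$ and $\Delta s$. If the endpoint case causes trouble, shift the tube centre to $y_0$ for time $\tfrac14$ and then to $0$, using the Markov property.
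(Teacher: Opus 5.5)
Your tilt, the density formula $d\Prob_x/d\widetilde\Prob_x=\exp(-\tfrac12\mu\widetilde W_{\Delta s}-\tfrac18\mu^2\Delta s)$, and the pathwise bound on $\widetilde W_{\Delta s}$ via $\widetilde W_{\Delta s}=Y_{\Delta s}-y_0+\tfrac12\int_0^{\Delta s}Y\,ds$ are all correct. The pathwise bound is even a little cleaner than the paper's localized Cameron--Martin inequality with $K=\lambda^{3/2}$.

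The gap is the claimed uniform bound $\widetilde\Prob_x(G)\ge p_1'>0$, which is false. Your start point $y_0=x-\mu$ ranges over the \emph{closed} interval $[-\tfrac12,\tfrac12]$, while $G$ requires $|Y_s|\le\tfrac12$ from time $0$. For $y_0=\pm\tfrac12$, the OU path is locally Brownian and leaves $[-\tfrac12,\tfrac12]$ immediately almost surely (Blumenthal's $0$--$1$ law). So $\widetilde\Prob_x(G)=0$ at both endpoints of $I_\lambda$, and $\widetilde\Prob_x(G)\to0$ as $x$ approaches them. Compactness cannot rescue this, because the function of $(y_0,\Delta s)$ is continuous and vanishes on part of the compact set. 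Lower semicontinuity of an open-tube probability gives positivity only on compact subsets of the \emph{open} interval of start points.

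Your first remedy still imposes $\sup|Y|\le\tfrac12$, so it changes nothing. Your second remedy necessarily lets $Y$ leave $[-\tfrac12,\tfrac12]$, which abandons $G$, and you never check that the new event still implies the target event. This matters for the application: the block lemma is used at every block endpoint through the Markov property, so a bound that degenerates near the edges of $I_\lambda$ cannot be concatenated in \cref{lem:B4}.

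The missing observation is that only the terminal constraint is tight. The running constraint $U_s\ge\lambda$ means $Y_s\ge-\lambda\varepsilon_3-\tfrac12$, a slack of order $\lambda\varepsilon_3$. So localize instead in the tube $\{\sup_{s\le\Delta s}|Y_s-\psi(s)|\le\tfrac14\}$, with $\psi$ linear from $y_0$ to $0$. The start now sits at the centre of the tube, so the tube probability is positive for every $(y_0,\Delta s)$. It is also bounded below on $[-\tfrac12,\tfrac12]\times[\tfrac12,1]$: a further Cameron--Martin shift by the bounded deterministic drift $\tfrac12\psi+\dot\psi$ reduces it to $\{\sup_{s\le1}|Y_s|\le\tfrac14\}$ for a centred OU started at $0$, and an integration by parts shows the density is bounded below on the tube.

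On this tube $|Y|\le\tfrac34$, which gives three things:
\begin{itemize}
\item $U\ge\lambda$ throughout, once $\lambda\varepsilon_3\ge\tfrac14$;
\item $U_{\Delta s}\in I_\lambda$, since $|Y_{\Delta s}|\le\tfrac14$;
\item $|\widetilde W_{\Delta s}|\le\tfrac98$ in your pathwise estimate.
\end{itemize}
The rest of your argument then goes through unchanged. This is exactly the paper's device: it centres at $u_\lambda=\lambda(1+\varepsilon_3)$, localizes in the width-$\tfrac14$ tube about a linear $\psi$ from $x-u_\lambda$ to $\tfrac12$, and uses $\lambda-u_\lambda=-\lambda\varepsilon_3\le-1$ to absorb the dip of the tube below the start.
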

\begin{proof}
Write $u_\lambda:=\lambda(1+\varepsilon_3)$ and tilt the driving Brownian motion by the
constant drift $u_\lambda/2$: under the tilt, $dU=-\tfrac12(U-u_\lambda)\,ds+d\widetilde W$
is an Ornstein--Uhlenbeck process centred at $u_\lambda$. The localized Cameron--Martin
inequality
$\Prob_x(A)\ge e^{-\frac12\int g^2-K}\bigl(\widetilde\Prob_x(A)-\widetilde\Prob(\int g\,d\widetilde W>K)\bigr)$
applies with $\tfrac12\int g^2=u_\lambda^2\Delta s/8$ and $K:=\lambda^{3/2}$; since
$\int g\,d\widetilde W\sim N(0,u_\lambda^2\Delta s/4)$ and $K$ is at least $\lambda^{1/2}$
standard deviations, the subtracted term is at most $e^{-\lambda/2}$. For the main term,
the recentred process $\widetilde U:=U-u_\lambda$ is a centred Ornstein--Uhlenbeck
process started at $x-u_\lambda\in[0,1]$, and the event contains
$\{\widetilde U_s\ge-1\ (s\le\Delta s),\ \widetilde U_{\Delta s}\in[0,1]\}$ because
$\lambda-u_\lambda=-\lambda\varepsilon_3\le-1$ for $\lambda\ge1/\varepsilon_3$. That event
contains the tube event $\{\sup_{s\le\Delta s}|\widetilde U_s-\psi(s)|\le\tfrac14\}$ with
$\psi$ linear from $x-u_\lambda$ to $\tfrac12$. The law of the centred process on
$[0,\Delta s]$ is a nondegenerate Gaussian measure with full support on continuous paths
from $x-u_\lambda$, so the tube probability is positive and depends continuously on
$(x-u_\lambda,\Delta s)\in[0,1]\times[\tfrac12,1]$; hence it is bounded below by some
$2p_1>0$ on that compact set. Therefore
$\Prob_x(A)\ge e^{-u_\lambda^2\Delta s/8-\lambda^{3/2}}(2p_1-e^{-\lambda/2})\ge p_1e^{-u_\lambda^2\Delta s/8-\lambda^{3/2}}$
for $\lambda\ge\lambda_0$.
\end{proof}

Concatenating the entry segment with about $\ln(V_T/v_L)$ such blocks yields the
conditional probability of $E_B$.

\begin{lemma}[Barrier Tracking]\label{lem:B4}
For every fixed $L\ge L_0$ and $\varepsilon_3\in(0,\tfrac14)$, conditionally on a
volatility path in the tube,
\[
\Prob(E_B)\ \ge\ \exp\Bigl(-\bigl(1+\tfrac1L\bigr)(1+\varepsilon_3)^2\,
\frac{\lambda_+^2}{8}\,S\,(1+o(1))\Bigr),\qquad
S:=\ln(V_T/v_b)=2n(1+\varepsilon_1)(1+o(1)),
\]
with $o(1)\to0$ as $k\to\infty$ for fixed $(L,\varepsilon_3,\varepsilon_2,\delta,\varepsilon_1)$.
\end{lemma}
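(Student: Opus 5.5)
The plan is to concatenate the entry segment of \cref{lem:entry} on $[0,v_L]$ with a chain of Ornstein--Uhlenbeck survival blocks (\cref{lem:block}) on $[v_L,V_T]$, using the Markov property of $B$ at the block endpoints.

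\emph{Logarithmic time change.} On $[v_L,V_T]$ put $s=\ln v$ and $U(s):=e^{-s/2}B(e^s)$. Conditionally on the volatility path, $B$ is a standard Brownian motion (\cref{lem:cond}), so $U$ solves $dU=-\tfrac12U\,ds+dW$. The moving barrier $(1+\varepsilon_2)C\hat a(v)$ is dominated by $\lambda_+\sqrt{v+v_\alpha}\le\lambda_L\sqrt v$ for $v\ge v_L$. Hence the first constraint of $E_B$ on $[v_L,V_T]$ is implied by $U\ge\lambda_L$ on $[s_L,s_T]$. On $[v_b,v_L]$ the barrier is at most $\lambda_+\sqrt{v_L+v_\alpha}=h_L$, which is exactly the constraint handled by \cref{lem:entry}. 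The constraint on $[0,v_b]$ is also part of \cref{lem:entry}. Moreover, the terminal window of \cref{lem:entry}, $B(v_L)\in[u_0\sqrt{v_L},(u_0+1)\sqrt{v_L}]$, says precisely that $U(s_L)\in I_{\lambda_L}$.

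\emph{Block concatenation.} Split $[s_L,s_T]$ into $N$ blocks, each of length in $[\tfrac12,1]$, with $N\le s_T-s_L+2$. This is possible once $s_T-s_L\ge\tfrac12$, which holds for large $k$ since $V_T/v_L\asymp e^{2n(1+\varepsilon_1)}$ on the tube. We require, at the end of each block, that $U\in I_{\lambda_L}$. By the Markov property at the block endpoints and \cref{lem:block} with $\lambda=\lambda_L\asymp\sqrt n\to\infty$, the conditional probability of staying above $\lambda_L$ throughout is at least
\[
e^{-c_L\lambda_+^2}\,p_1^{N}\exp\Bigl(-\frac{\lambda_L^2(1+\varepsilon_3)^2}{8}(s_T-s_L)-N\lambda_L^{3/2}\Bigr).
\]
The factor $e^{-c_L\lambda_+^2}$ is the entry cost.

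\emph{Terminal constraint and bookkeeping.} The terminal constraint of $E_B$ requires $B(V_T)\ge\tfrac{1+\varepsilon_2}{\orth}(m+\tfrac{|\rho|}{\nu}a_T)$. At the final block the window gives $B(V_T)\ge u_0\sqrt{V_T}$. On the tube, $\sqrt{V_T}\asymp a_T/\sqrt{\nu\theta}$, so $u_0\sqrt{V_T}$ dominates $(1+\varepsilon_2)C a_T$ with room to spare, since $u_0/\lambda_+>1$. It remains to check that it dominates $m/\orth$, and this is the step I expect to be the main obstacle. We have $a_T\ge e^{-\delta}\alpha e^{n(1+\varepsilon_1)}$ while $m=e^{n+O(1)}$, so $u_0\sqrt{V_T}\gtrsim\sqrt n\,e^{n(1+\varepsilon_1)-\delta}/\sqrt n\gg m$ because $\varepsilon_1>0$. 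If the margin turns out too thin, I would tilt the final block's target window upward, which costs at most $O(\lambda_L^2)$ and is negligible. The exponent is then collected as follows.
\begin{itemize}
\item $\lambda_L^2=(1+1/L)\lambda_+^2$.
\item $s_T-s_L\le\ln(V_T/v_b)=S$.
\item $N\lambda_L^{3/2}=O(n\cdot n^{3/4})=o(\lambda_+^2S)$, since $\lambda_+^2 S\asymp n^2$.
\item $N\ln(1/p_1)=O(n)=o(n^2)$.
\item $c_L\lambda_+^2=O(n)=o(n^2)$.
\end{itemize}
Together these give the stated bound with the $(1+o(1))$ factor.

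Finally, I would verify the identity $S=2n(1+\varepsilon_1)(1+o(1))$. On the tube, $V_T=e^{\pm2\delta}V^*(T)$ with $V^*(T)\asymp e^{2\nu\theta T}/\theta$, and $v_b\asymp1/\theta$. Hence $S=2\nu\theta T+O(1)=2n(1+\varepsilon_1)+O(1)$. The delicate points are uniformity of \cref{lem:block} over the start point in $I_\lambda$, which the lemma provides, and the fact that all constants depend only on the fixed parameters and not on $k$.
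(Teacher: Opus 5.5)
Your proposal is correct and follows the paper's proof essentially step for step: the same domination $\lambda_+\sqrt{v+v_\alpha}\le\lambda_L\sqrt v$ on $[v_L,V_T]$, the entry lemma delivering $U(s_L)\in I_{\lambda_L}$, the concatenation of \cref{lem:block} over $O(n)$ blocks by the Markov property, and the same $o(n^2)$ bookkeeping, including $s_T-s_L\le S$ and $S=2\nu\theta T+O(1)$. For the terminal threshold, your ``room to spare'' is best stated as in the paper: $u_0\sqrt{V_T}=(1+\varepsilon_3)\lambda_L\sqrt{V_T}\ge\lambda_+\sqrt{V_T+v_\alpha}+\varepsilon_3\lambda_+\sqrt{V_T}$, where the first term covers $(1+\varepsilon_2)Ca_T$ and the second is of order $a_T\gg m$, so no extra tilt of the final window is needed.
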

\begin{proof}
\emph{Domination.} For $v\in[v_L,V_T]$ one has $v+v_\alpha\le(1+1/L)v$, hence
\[
\begin{aligned}
\bigl\{B(v)\ge\lambda_L\sqrt v\ \ \forall v\in[v_L,V_T]\bigr\}
&\ \subseteq\ \bigl\{B(v)\ge\lambda_+\sqrt{v+v_\alpha}\ \ \forall v\in[v_L,V_T]\bigr\}\\
&\ \subseteq\ \bigl\{B\ge(1+\varepsilon_2)C\hat a\ \text{on}\ [v_L,V_T]\bigr\},
\end{aligned}
\]
while on $[v_b,v_L]$ the moving barrier is at most $\lambda_+\sqrt{v_L+v_\alpha}=h_L$. In
the coordinate
\begin{equation*}
U(s)=e^{-s/2}B(e^s),\qquad dU=-\tfrac12U\,ds+dW,
\end{equation*}
a stationary Ornstein--Uhlenbeck process with covariance $e^{-|s-s'|/2}$, the first
event reads
\begin{equation*}
\bigl\{U(s)\ge\lambda_L\ \text{for all } s\in[s_L,s_T]\bigr\}.
\end{equation*}

\emph{Entry and block concatenation.} Apply \cref{lem:entry}. Then, by the Markov
property of $B$ at the $\sigma(a)$-measurable operational time $v_L$ and the Markov
property of $U$ at the endpoints $s_L<s_L+\Delta s<\dots<s_T$ of $N$ blocks of equal
duration, where
\begin{equation*}
N:=\lceil s_T-s_L\rceil,\qquad \Delta s:=\frac{s_T-s_L}{N}\in\bigl(\tfrac12,1\bigr],
\end{equation*}
\cref{lem:block}---uniform over starting points in $I_{\lambda_L}$ and over
$\Delta s$---applies at every block:
\[
\Prob\bigl(\text{entry};\ U\ge\lambda_L\ \text{on}\ [s_L,s_T];\ U(s_T)\in I_{\lambda_L}\bigr)
\ \ge\ e^{-c_L\lambda_+^2}\;p_1^N\exp\Bigl(-\frac{u_0^2}{8}(s_T-s_L)-N\lambda_L^{3/2}\Bigr).
\]
Since $N\le S+1$, $\lambda_L^{3/2}=O(n^{3/4})$ and $\lambda_+^2=O(n)$, all terms other
than $u_0^2(s_T-s_L)/8$ total $O(n^{7/4})=o(n^2)$.

\emph{The constructed event implies $E_B$.} On the intersection above,
$B\ge-X_0/(2\orth)$ on $[0,v_b]$ and $B\ge h_L\ge(1+\varepsilon_2)C\hat a$ on $[v_b,v_L]$
(entry); $B\ge(1+\varepsilon_2)C\hat a$ on $[v_L,V_T]$ (domination); and the terminal
threshold holds because
\[
B(V_T)\ \ge\ u_0\sqrt{V_T}\ \ge\ \lambda_+\sqrt{V_T+v_\alpha}\;+\;\varepsilon_3\lambda_+\sqrt{V_T}\ \ge\ (1+\varepsilon_2)C\,a_T\;+\;\frac{(1+\varepsilon_2)m}{\orth}
\]
for $k\ge k_0$: the first summand dominates $(1+\varepsilon_2)C\hat a(V_T)=(1+\varepsilon_2)Ca_T$
by \eqref{eq:ahat-sqrt}, and
$\varepsilon_3\lambda_+\sqrt{V_T}\ge\varepsilon_3(1+\varepsilon_2)C\,a_T\,(1-o(1))\ge(1+\varepsilon_2)m/\orth$
since $a_T\ge e^{-\delta}\alpha e^{n(1+\varepsilon_1)}\gg m$. Hence the constructed event
is contained in $E_B$, and $u_0^2(s_T-s_L)\le(1+1/L)(1+\varepsilon_3)^2\lambda_+^2S$
gives the claim.
\end{proof}

\subsection{Assembly}
The volatility-tube and conditional barrier-tracking costs add at the quadratic scale;
all entry, block-concatenation, localization and bootstrap terms are of lower order.

\begin{proof}[Proof of \cref{prop:lower}]
By \cref{lem:cev,lem:B3}, $P(k)\ge\Prob(A_{\mathrm{vol}}\cap E_B)$, and
by \cref{lem:cond}, \eqref{eq:tube} and \cref{lem:B4},
\[
-\ln P(k)\ \le\ \frac{n^2}{2\nu^2T}\Bigl[(1+\varepsilon_1)^2+\nu^2C^2\bigl(1+\tfrac1L\bigr)(1+\varepsilon_3)^2(1+\varepsilon_2)^2e^{4\delta}(1+\varepsilon_1)^2\Bigr](1+o(1)),
\]
where we used
\begin{equation*}
\Bigl(1+\frac1L\Bigr)(1+\varepsilon_3)^2\,\frac{\lambda_+^2S}{8}
=\nu^2C^2\Bigl(1+\frac1L\Bigr)(1+\varepsilon_3)^2(1+\varepsilon_2)^2e^{4\delta}(1+\varepsilon_1)^2\,
\frac{n^2}{2\nu^2 T}\,(1+o(1)).
\end{equation*}
Let $k\to\infty$, then $\varepsilon_3,\varepsilon_1,\varepsilon_2,\delta\downarrow0$ and
$L\to\infty$, and use $1+\nu^2C^2=(1-\rho^2)^{-1}$.
\end{proof}

\section{Nonnegative Correlation}\label{sec:pos}

For nonnegative correlation the correlation term no longer creates a growing negative
barrier, and the rate is the unconstrained one. The upper bound therefore needs only a
terminal Gaussian constraint, while the lower bound must still control the singular
drift. Throughout this section $\rho\in[0,1)$.

\subsection{Upper Bound}
Even after the survival constraint is dropped, no terminal strategy improves on the
unconstrained quadratic rate.

\begin{proposition}[Upper Bound, $\rho\ge0$]\label{prop:pos-upper}
For $\rho\in[0,1)$,
\[
\liminf_{k\to\infty}\frac{-\ln P(k)}{k^2}\;\ge\;c.
\]
\end{proposition}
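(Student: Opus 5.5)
We prove the upper bound on $P(k)$ using only the terminal constraint. By \cref{lem:cev,lem:A1},
\[
P(k)\le\Prob(\bar X_T\ge m)=\Prob\bigl(X_0+\tfrac{\rho}{\nu}(a_T-\alpha)+\orth B(V_T)\ge m\bigr).
\]
The target is to show that either $a_T$ or $V_T$ must be of size $e^{(1-o(1))n}$. Either requirement forces the log-volatility $\nu Z_t-\nu^2t/2$ to reach level roughly $n$ before $T$, and the reflection principle prices this at $n^2/(2\nu^2T)=ck^2$.

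Fix $\varepsilon\in(0,1)$ and write $\Lambda_T:=\sup_{t\le T}(\nu Z_t-\tfrac12\nu^2t)$, so that $a_T\le\alpha e^{\Lambda_T}$ and $V_T\le\alpha^2Te^{2\Lambda_T}$. Split on the event $G:=\{\Lambda_T\ge(1-\varepsilon)n\}$.

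On $G$, the reflection principle applied to $\sup_{t\le T}\nu Z_t$ gives
\[
\Prob(G)\le 2\exp\bigl(-(1-\varepsilon)^2n^2/(2\nu^2T)\bigr).
\]

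On $G^c$, we have $a_T\le\alpha e^{(1-\varepsilon)n}$, and $\ln m=n+O(1)$. Hence for large $k$ the correlation term is at most $\tfrac{\rho}{\nu}\alpha e^{(1-\varepsilon)n}\le m/2$. This step uses $\rho\ge0$ only for the sign bookkeeping; for $\rho=0$ the term is absent. The terminal event therefore requires $\orth B(V_T)\ge m/2-X_0$.

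Conditionally on $\sigma(a)$, \cref{lem:cond} makes $B(V_T)$ centred Gaussian with variance $V_T\le\alpha^2Te^{2(1-\varepsilon)n}$ on $G^c$. The one-node case of \cref{lem:A2} then bounds the conditional probability by
\[
\exp\bigl(-c_7m^2e^{-2(1-\varepsilon)n}\bigr)\le\exp\bigl(-c_8e^{2\varepsilon n}\bigr),
\]
which is superexponentially small compared with $e^{-ck^2}$.

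Combining the two cases,
\[
P(k)\le 2e^{-(1-\varepsilon)^2ck^2}+e^{-c_8e^{2\varepsilon n}}.
\]
Taking logarithms, dividing by $k^2$, letting $k\to\infty$ and then $\varepsilon\downarrow0$ yields the claimed liminf.

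There is no real obstacle here; the only care needed is measurability. The bound on $G^c$ must be applied under the kernel $\mathsf K$ for $\Prob_a$-almost every path, with threshold and variance both $\sigma(a)$-measurable, exactly as in part (b) of the assembly of \cref{prop:upper}. The drift $-\nu^2t/2$ in $\Lambda_T$ only helps, since $\Lambda_T\le\sup_{t\le T}\nu Z_t$, so the Gaussian tail of the running maximum of $Z$ suffices.
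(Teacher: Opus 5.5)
Your proof is correct, and it takes a more elementary route than the paper. The paper reuses the full machinery of \cref{sec:upper}: the level-hitting skeleton $\tau_j$, the time bins $\mathcal B_i$, the $e^{o(n^2)}$ configuration count, and a per-configuration terminal-only energy bound (\cref{lem:term}) in which the volatility cost $\sum_j\eta^2/(2\nu^2\Delta t_j)$ is controlled by Cauchy--Schwarz.

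The two cases of \cref{lem:term} are in fact exactly your dichotomy. Either the skeleton has climbed to $J\eta\ge(1-\varepsilon')n$, which costs at least $(1-\varepsilon')^2ck^2$ up to the $\kappa$-slack, or the terminal Gaussian term is at least of order $e^{2\varepsilon'n}$. You replace the discretized climb by a single split on the running maximum $\Lambda_T$ and price the high branch directly by the reflection principle. This is step~(a) of \cref{sec:assembly} with the cap lowered from $r_{\mathrm{cap}}n$ to $(1-\varepsilon)n$. You then dispose of every sub-cap path at once with the one-node Gaussian bound, which is step~(b) extended from $J^*=0$ to all paths with $\Lambda_T<(1-\varepsilon)n$. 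This removes the auxiliary parameters $\eta,\kappa$, the binning and the combinatorial factor. It also gives the explicit nonasymptotic estimate $P(k)\le 2e^{-(1-\varepsilon)^2ck^2}+e^{-c_8e^{2\varepsilon n}}$.

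What the paper's route buys is structural uniformity with the case $\rho<0$, where your shortcut would not suffice. There the survival cost depends on how long the volatility spends at each level, not merely on its maximum, so the skeleton and \cref{lem:A5} are needed to capture the extra factor $(1-\rho^2)^{-1}$. For the terminal-only bound with $\rho\ge0$, the maximum of log-volatility carries all the leading-order cost, and your argument is the cleaner one.
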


Conditioning on volatility reduces the problem to a one-node Gaussian estimate coupled
to the level-hitting cost of \cref{sec:upper}. By \cref{lem:cev,lem:A1}, dropping the survival constraint as well,
$P(k)\le\Prob(\bar X_T\ge m)$, and conditionally on the volatility path the one-node case
of \cref{lem:A2} gives
\[
P(k)\le\E\exp\Bigl(-\frac{((m-X_0-\xi_T)^+)^2}{2\orth^2V_T}\Bigr),\qquad
\xi_T=\frac\rho\nu(a_T-\alpha)\le\frac\rho\nu a_T.
\]
We reuse the level-hitting skeleton, the operational-time bounds, the time bins and the
configuration count of \cref{sec:assembly}, with \emph{no} running constraints and
only the terminal node, and take $r_{\mathrm{cap}}=3$. The high-volatility tail then has
probability at most $e^{-2ck^2}$ and the $J^*=0$ cell at most $e^{-c_8m^2}$, exactly as
in \cref{sec:assembly}(a)--(b). The per-configuration estimate is the following
terminal-only analogue of \cref{lem:A5}.

\begin{lemma}[Terminal-Only Energy]\label{lem:term}
Fix $\eta\in(0,\tfrac12)$, $\kappa\in(0,1)$ and $\varepsilon'\in(0,\tfrac14)$. There are
constants $c_6,c_7$, depending only on the model parameters, and
$k_0=k_0(\eta,\kappa,\varepsilon')$ such that for all $k\ge k_0$, all $J\ge1$, and all
$\Delta t_j>0$ with $\sum_{j\le J}\Delta t_j\le(1+\kappa)T$,
\[
\sum_{j\le J}\frac{\eta^2}{2\nu^2\Delta t_j}\ +\ \frac{\bigl((m-X_0-(\rho/\nu)A_Je^{\eta})^+\bigr)^2}{4e^{2\eta}\orth^2A_J^2T}\ \ \ge\ \ \min\Bigl\{(1-\varepsilon')^2\,c\,k^2\,(1-c_7\kappa),\ \ \frac{e^{2\varepsilon' n}}{c_6T}\Bigr\}.
\]
\end{lemma}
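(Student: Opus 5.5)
The plan is to mirror the structure of the proof of \cref{lem:A5}, but with only the terminal node, splitting on $r:=J\eta/n$. The two summands play the roles of a volatility cost and a terminal Gaussian cost. By Cauchy--Schwarz the first summand satisfies
\[
\sum_{j\le J}\frac{\eta^2}{2\nu^2\Delta t_j}\ \ge\ \frac{(J\eta)^2}{2\nu^2(1+\kappa)T}=\frac{r^2n^2}{2\nu^2(1+\kappa)T},
\]
using $\sum_j\Delta t_j\le(1+\kappa)T$. Since $n=(1-\beta)k$, this equals $r^2ck^2/(1+\kappa)$. The second summand matters only when $r<1$. There $A_J=\alpha e^{rn}$ is exponentially smaller than $m\asymp e^{n}$, so both the correlation correction $(\rho/\nu)A_Je^\eta$ and $X_0$ are negligible relative to $m$.

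\emph{Case $r\ge1-\varepsilon'$.} I drop the second summand. Then the first summand is at least $(1-\varepsilon')^2ck^2/(1+\kappa)$. Since $1/(1+\kappa)\ge1-\kappa$, this is at least $(1-\varepsilon')^2ck^2(1-c_7\kappa)$ with $c_7=1$.

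\emph{Case $r<1-\varepsilon'$.} I drop the first summand. Recall $\ln m=n+O(1)$, say $m\ge e^{n}/c'$. Also $A_J=\alpha e^{J\eta}=\alpha e^{rn}\le\alpha e^{(1-\varepsilon')n}$. Hence for $k\ge k_0(\eta,\varepsilon')$ one has $X_0+(\rho/\nu)A_Je^{\eta}\le m/2$, because $e^{\eta}\le e^{1/2}$ and $e^{-\varepsilon'n}\to0$. The numerator is therefore at least $m^2/4$, and the second summand is at least
\[
\frac{m^2}{16e^{2\eta}\orth^2A_J^2T}\ \ge\ \frac{e^{2n}}{16c'^2e\,\orth^2\alpha^2e^{2(1-\varepsilon')n}T}=\frac{e^{2\varepsilon'n}}{c_6T},
\]
with $c_6:=16c'^2e\,\orth^2\alpha^2$, which depends only on the model parameters.

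Combining the two cases gives the stated minimum. The only delicate point is the uniformity in $J$. It holds because the case split is on $J\eta/n$ rather than on $J$, and the threshold $k_0$ in the second case depends only on $\eta$ and $\varepsilon'$, not on $J$. In this case $A_J$ is bounded by $\alpha e^{(1-\varepsilon')n}$ regardless of how large $J$ is relative to other data. The constants $c_6$ and $c_7$ are also free of $\eta$ and $\kappa$: the factor $e^{2\eta}$ is bounded by $e$ since $\eta<\tfrac12$, which I absorbed into $c_6$. I expect no real obstacle; the step needing care is making $m/A_J$ exponentially large with a constant independent of $\eta$, which uses $\eta<\tfrac12$.
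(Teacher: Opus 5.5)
Your proof is correct and follows essentially the paper's route: Cauchy--Schwarz on the level-crossing cost when $r=J\eta/n\ge1-\varepsilon'$, and the exponential size of $m/A_J$ in the terminal term when $r<1-\varepsilon'$. The only difference is organizational. The paper first treats the case $(\rho/\nu)A_Je^{\eta}\ge m/2$ separately. You observe instead that this case cannot occur for large $k$ when $r<1-\varepsilon'$, so your direct split on $r$ suffices and gives $c_7=1$ explicitly.
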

\begin{proof}
Set $r:=J\eta/n$. \emph{Case 1: $(\rho/\nu)A_Je^{\eta}\ge m/2$} (possible only for
$\rho>0$). Then
\begin{equation*}
J\eta=\ln(A_J/\alpha)\ \ge\ \ln\bigl(\nu m/(2\rho\alpha)\bigr)-\eta\ \ge\ n-c_7',
\end{equation*}
and Cauchy--Schwarz gives, for $k\ge k_0$,
\begin{equation*}
\sum_j\frac{\eta^2}{2\nu^2\Delta t_j}\ \ge\ \frac{(J\eta)^2}{2\nu^2(1+\kappa)T}\ \ge\ (1-\varepsilon')^2ck^2(1-c_7\kappa),
\end{equation*}
the factor $(1-\varepsilon')^2$ absorbing the $O(1/n)$ loss from $c_7'$.
\emph{Case 2: otherwise.} Then, for $k\ge k_0$,
\begin{equation*}
\bigl(m-X_0-(\rho/\nu)A_Je^{\eta}\bigr)^+\ \ge\ \frac m2-X_0\ \ge\ \frac m4,
\end{equation*}
so the terminal term is at least
\begin{equation*}
\frac{m^2}{64e^{2\eta}\orth^2A_J^2T}\ \ge\ \frac{e^{2(1-r)n}}{c_6T}.
\end{equation*}
If $r\le1-\varepsilon'$ this is at least $e^{2\varepsilon'n}/(c_6T)$; if
$r>1-\varepsilon'$, Cauchy--Schwarz as in Case~1 gives
\begin{equation*}
\sum_j\frac{\eta^2}{2\nu^2\Delta t_j}\ \ge\ \frac{(rn)^2}{2\nu^2(1+\kappa)T}\ \ge\ (1-\varepsilon')^2ck^2(1-c_7\kappa).
\end{equation*}
\end{proof}

The upper bound now follows by summing over configurations as in
\cref{sec:assembly}.

\begin{proof}[Proof of \cref{prop:pos-upper}]
Within each configuration, \cref{lem:A3}(i) gives $a<A_{J^*}e^{\eta}$ on $[0,T]$
and hence
\begin{equation*}
\xi_T\le\frac\rho\nu a_T\le\frac\rho\nu A_{J^*}e^{\eta},\qquad V_T\le\bar V=2e^{2\eta}A_{J^*}^2T,
\end{equation*}
so the product of the level-hitting cost (\cref{lem:A4} at the
thresholds $\bar t_j$) and the conditional Gaussian factor is bounded by
\cref{lem:term} applied with $\Delta t_j:=\bar t_j$ and $J:=J^*$; the second branch
of the minimum is superquadratic in $k$. Summing over the $e^{o(n^2)}$ configurations,
letting $k\to\infty$ and then $\varepsilon',\eta,\kappa\downarrow0$ proves the
proposition.
\end{proof}

\subsection{Lower Bound}\label{sec:pos-lower}
For the matching lower bound it suffices that survival and the terminal adjustment cost
only $O(n)$ beyond the volatility excursion.

\begin{proposition}[Lower Bound, $\rho\ge0$]\label{prop:pos-lower}
For $\rho\in[0,1)$,
\[
\limsup_{k\to\infty}\frac{-\ln P(k)}{k^2}\;\le\;c.
\]
\end{proposition}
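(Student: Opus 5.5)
We build surviving paths that reach the target at cost $ck^2+o(k^2)$. We reuse the volatility tube of \cref{lem:tube} with $\theta=n(1+\varepsilon_1)/(\nu T)$, which by \eqref{eq:tube} costs $\frac{n^2(1+\varepsilon_1)^2}{2\nu^2T}+O(n)=(1+\varepsilon_1)^2ck^2+O(n)$. We then show that, conditionally on a tube path, the orthogonal Brownian motion $B$ can keep $\bar X$ positive and give a terminal margin at conditional cost $e^{-O(n\ln n)}$ or better, i.e.\ $e^{-o(n^2)}$. The bootstrap of \cref{lem:B3} then removes the singular drift.

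\textbf{Case $\rho>0$.} Here $\xi_t=\frac{\rho}{\nu}(a_t-\alpha)\ge -\frac{\rho}{\nu}\alpha e^{\delta}$ is bounded below on the tube. It is also huge at the end: $\xi_T\ge \frac{\rho}{\nu}e^{-\delta}\alpha e^{n(1+\varepsilon_1)}\gg m$. The only danger is early time, where $\xi$ can be slightly negative.

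We require $B\ge -X_0/(2\orth)$ on $[0,v_b]$, where $v_b$ is the operational time at which $\hat a$ first reaches $\alpha+c_*$ for a suitable small constant $c_*$. Past that time we require $B(v)\ge -\epsilon\hat a(v)$ for a small $\epsilon<\rho/(\nu\orth)$. In the coordinate $U(s)=e^{-s/2}B(e^s)$ this is a constant \emph{lower} barrier $-\epsilon'\sqrt{n}$ for a stationary OU process. Its probability over logarithmic time $S\asymp 2n$ is $e^{-o(n^2)}$: the barrier sits far below the mean, so each unit block costs at most a bounded factor. Rather than doing this cheaply, we can simply reuse \cref{lem:block} with a level $\lambda\asymp 1$, which gives $p_1^N e^{-O(N)}=e^{-O(n)}$.

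With these constraints, the positivity and drift-control steps of \cref{lem:B3} go through verbatim. Indeed $\bar X\ge X_0+\frac{\rho}{\nu}(\hat a-\alpha)-\epsilon\orth\hat a\gtrsim \hat a$, so $D_t/\bar X_t=O(1/\theta)$. The terminal value $X_T\ge\xi_T/2\ge m$ follows.

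\textbf{Case $\rho=0$.} Here $\xi\equiv0$ and survival plus terminal margin must come from $B$ itself. We require $B(v)\ge 1\cdot\sqrt{v+v_\alpha}$-type growth, i.e.\ $U\ge\lambda$ with $\lambda$ a fixed constant, and the terminal $B(V_T)\ge 2m$. Since $\sqrt{V_T}\asymp a_T/\sqrt\theta\gg m$ once $\varepsilon_1>0$, holding $U\ge\lambda_{\mathrm{fix}}$ with $\lambda_{\mathrm{fix}}$ fixed but large enough already gives $B(V_T)\ge\lambda_{\mathrm{fix}}\sqrt{V_T}\ge 2m$. The cost is $e^{-O(n)}$ by the block concatenation of \cref{lem:B4}, with the entry handled as in \cref{lem:entry} at bounded cost. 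The drift bound needs $X\gtrsim\bar X\gtrsim\sqrt{v}$, and then $\int a^2/X\,ds\lesssim\int a^2/\sqrt{V}\,ds=2\sqrt{V_T}$. This is only of the same order as $\bar X_T$, not smaller. We therefore choose $\lambda_{\mathrm{fix}}$ large relative to $b$, so that $D\le\bar X/2$ closes the bootstrap. Alternatively we replace the fixed $\lambda$ by $\lambda=n^{1/4}$, which costs $O(n^{3/2})=o(n^2)$ and makes $D/\bar X=O(n^{-1/4})$. I would take the latter for safety, and the same device also works for $\rho>0$.

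\textbf{Main obstacle.} The main obstacle is the drift bootstrap near time $0$ and in the $\rho=0$ case, where there is no large $\xi$ to dominate. There one must check that $b\int a^2/X$ stays below $\bar X/2$ uniformly, using $X\ge\bar X/2$ and $\bar X\gtrsim n^{1/4}\sqrt{V}$ on the constructed event.

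Combining these pieces gives $-\ln P(k)\le (1+\varepsilon_1)^2ck^2(1+o(1))$. Letting $k\to\infty$ and then $\varepsilon_1,\delta\downarrow0$ yields the proposition. Together with the companion bounds, this completes \cref{thm:main-tail}.
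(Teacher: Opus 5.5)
Your proposal is correct in outline, but it takes a genuinely different route from the paper.

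\textbf{What the paper does.} The paper deliberately avoids an exponential overshoot. It takes $\theta=(n+c_9)/(\nu T)$, so that $a^*(T)=\gamma m$ and $V_T\asymp m^2/n$, which is too short for a free Brownian path to reach $m$. It therefore:
\begin{itemize}
\item compares $X$, in operational time, with the drift-only process $Y^-$ started at $X_0/2$, using only $\hat\xi\ge -X_0/4$ and hence treating all $\rho\ge0$ at once;
\item raises $Y^-$ by $O(n)$ scale-invariant doublings up to $m/\sqrt n$;
\item continues by $O(\ln n)$ forced Cameron--Martin steps to $2m$;
\item finally holds it above $m$.
\end{itemize}

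\textbf{Why your construction escapes the paper's obstruction.} The paper notes that with an exponential overshoot, the drift accumulated while holding at height $2m$ would exceed that level. You avoid this because you never hold at a fixed height. Riding $B(v)\ge\lambda\sqrt v$ over the whole horizon ends at $\lambda\sqrt{V_T}\gg m$, since $\sqrt{V_T}\asymp e^{\varepsilon_1 n}m/\sqrt n$. On $\{X\ge\bar X/2\}$ the drift is at most $b\int dv/(\tfrac12\lambda\sqrt v)\le 4b\sqrt v/\lambda$, which is a fraction $4b/\lambda^2$ of the level. So a fixed $\lambda$ with $\lambda^2>8b$ already closes the bootstrap, at conditional cost $e^{-O(n)}$. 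You therefore do not need $\lambda=n^{1/4}$.

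\textbf{Trade-offs.} Your route is more economical. It reuses the tube, entry, Ornstein--Uhlenbeck block and bootstrap machinery of the $\rho<0$ section, and for $\rho>0$ a one-sided lower barrier already suffices. The paper's route gives a sharper statement, $-\ln P(k)\le ck^2+O(k)$, with no $\varepsilon_1\downarrow0$ limit, and one argument covers all $\rho\ge0$. With your method, the same sharpness would need at least a logarithmic overshoot, since you need $a_T\gg\sqrt n\,m$. Even then it gives only $O(k\ln k)$.

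\textbf{Points to fix when writing it up.}
\begin{itemize}
\item At $\rho=0$ the constants $c_*$ and $v_b$ of \cref{lem:entry} are undefined, so state the entry directly. Require $B\ge -X_0/2$ on $[0,v_L]$ with $v_L\to0$, so that the drift accumulated there is $O(v_L)$.
\item Near time $0$ the useful bound is the tube bound $\xi\ge-\frac{\rho}{\nu}\alpha(1-e^{-\delta})$, made at most $X_0/4$ by taking $\delta$ small. Your bound $-\frac{\rho}{\nu}\alpha e^{\delta}$ is too crude to keep $\bar X>0$ on $[0,v_b]$.
\item Your claim $\bar X\gtrsim\hat a$ for $\rho>0$ holds for all $\hat a\ge\alpha e^{-\delta}$. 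The reason is that $X_0-\frac{\rho}{\nu}\alpha+(\frac{\rho}{\nu}-\epsilon\bar\rho)\hat a$ is increasing in $\hat a$ and positive at $\alpha e^{-\delta}$ for small $\epsilon,\delta$. You should say this explicitly.
\end{itemize}
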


We confine volatility near an exponential path whose endpoint is a fixed multiple of the
target level. The forward is then raised, first by scale-invariant doublings and then by
$O(\ln n)$ dyadic level-raising steps; both the orthogonal Brownian cost and the
accumulated drift loss remain subquadratic.

\emph{Parameters.} Fix $\delta\in(0,\tfrac14)$ with $\rho\alpha\delta/\nu\le X_0/4$. Let
$\kappa_0:=1/(16b)$ and choose a constant $c_9$, depending only on the parameters, such
that with
\[
\theta:=\frac{n+c_9}{\nu T},\qquad a^*(t)=\alpha e^{\nu\theta t},\qquad
\gamma:=\frac{a^*(T)}{m}=\alpha(1-\beta)e^{c_9}f_0^{\beta-1},
\]
one has $\gamma^2\ge 16(\kappa_0+1)e^{2\delta}\max(1,1/T)$. Unlike in
\cref{sec:lower}, the terminal volatility is only a fixed multiple of $m$. This
keeps $V_T\asymp\gamma^2m^2T/(2n)$ on the tube, so that the singular drift accumulated
during the level-raising and holding phases below is only $O(m\ln n/n)$; with an
exponential overshoot $a^*(T)=e^{\varepsilon_1n}m$, the drift $b\int a^2/X$ accumulated at
height $2m$ would exceed that level.

\emph{Volatility tube.} By \cref{lem:tube} with the present $\theta$,
\begin{equation}\label{eq:pos-tube}
\Prob(A_{\mathrm{vol}})\ge\exp\bigl(-n^2/(2\nu^2T)-O(n)\bigr).
\end{equation}
On the tube $a_t\ge\alpha e^{-\delta}$; hence, as $\rho\ge0$,
\begin{equation}\label{eq:pos-xi}
\xi_t=\tfrac\rho\nu(a_t-\alpha)\ge-\tfrac{\rho\alpha}{\nu}(1-e^{-\delta})\ge-\tfrac{\rho\alpha\delta}{\nu}\ge-\tfrac{X_0}{4}.
\end{equation}

\emph{Comparison process.} Let $Y^-$ denote a solution of
\begin{equation}\label{eq:Yminus}
dY^-_v=\orth\,dB_v-\frac{b}{Y^-_v}\,dv,
\end{equation}
absorbed at its first zero. The equation is invariant under the Brownian scaling
\begin{equation*}
(Y^-_v)_{v\ge0}\mapsto(y\,Y^-_{v/y^2})_{v\ge0},\qquad y>0,
\end{equation*}
and two solutions driven by the same $B$ with
ordered initial values remain ordered up to the absorption of the lower one (the argument
of \cref{lem:comparison} below with $\hat\xi\equiv0$). The following lemma reduces
survival of $X$ to survival of $Y^-$ on the operational-time horizon.

\begin{lemma}[Comparison on the Operational-Time Horizon]\label{lem:comparison}
Fix a volatility path in the tube, and put $Y_v:=X_{t(v)}$ and $\hat\xi_v:=\xi_{t(v)}$, so
that
\[
Y_v=X_0+\hat\xi_v+\orth B_v-b\int_0^v\frac{du}{Y_u},\qquad 0\le v\le V_T,
\]
up to the first zero of $Y$. Let $Y^-$ solve \eqref{eq:Yminus} on $[0,V_T]$ with
$Y^-_0=X_0/2$ and the same $B$, and let $\zeta^-:=\inf\{v\in[0,V_T]:Y^-_v=0\}$. Then
$Y_v\ge Y^-_v$ for $0\le v\le V_T\wedge\zeta^-$. In particular, if $\zeta^-=\infty$ then
$\zeta>T$ and $X_T=Y_{V_T}\ge Y^-_{V_T}$.
\end{lemma}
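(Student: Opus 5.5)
The plan is to compare $Y$ and $Y^-$ pathwise, since both are driven by the same Brownian motion $B$ in operational time. The key observation is that their difference has no martingale part. First I will verify the representation of $Y$. Substituting $t=t(v)$ into \eqref{eq:decomp} and using $M_{t(v)}=B_v$ from \cref{lem:cond}, the change of variables $dv=a_t^2\,dt$ turns $\int_0^t a_s^2/X_s\,ds$ into $\int_0^{V_t}du/Y_u$. This gives the stated equation for $Y$ up to its first zero.

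Next set $\Delta_v:=Y_v-Y^-_v$ on the common interval before either process vanishes. The $\orth B$ terms cancel, so $\Delta$ is absolutely continuous:
\[
\Delta_v=\tfrac{X_0}{2}+\hat\xi_v+b\int_0^v\Bigl(\frac1{Y^-_u}-\frac1{Y_u}\Bigr)du .
\]
By \eqref{eq:pos-xi}, on the tube $\hat\xi_v\ge -X_0/4$, so $\Delta_0=X_0/2>0$.

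I will argue by contradiction. Suppose $\sigma:=\inf\{v:\Delta_v\le0\}$ is at most $V_T\wedge\zeta^-$. On $[0,\sigma)$ we have $Y>Y^->0$, hence $1/Y^-_u-1/Y_u>0$, so the integral term is nonnegative. Then
\[
\Delta_\sigma\ge \tfrac{X_0}{2}+\hat\xi_\sigma\ge \tfrac{X_0}{4}>0,
\]
which contradicts $\Delta_\sigma\le0$ by continuity. Hence $Y\ge Y^-$, in fact $Y>Y^-$, on $[0,V_T\wedge\zeta^-]$.

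The one subtlety is that $Y$ might hit zero before $Y^-$ does. This is excluded because $Y>Y^->0$ wherever both are defined before $\zeta^-$. So $Y$ stays positive on $[0,V_T]$ whenever $\zeta^-=\infty$, meaning $X>0$ on $[0,T]$, i.e.\ $\zeta>T$; then $X_T=Y_{V_T}\ge Y^-_{V_T}$.

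The main obstacle is mild bookkeeping rather than analysis. The time change must be justified on the full-measure event of \cref{lem:cev}, with measurability handled through the kernel of \cref{lem:cond}. I also need to ensure the integral form of $Y^-$ is valid up to $\zeta^-$; the drift $1/Y^-$ is locally integrable before absorption, by the same stopping-at-levels argument as in \cref{lem:cev}. The inequality $\hat\xi\ge -X_0/4$ is the only place where $\rho\ge0$ and the tube enter. It is what makes the starting gap $X_0/2$ sufficient, with no need for a drift-comparison argument involving noise.
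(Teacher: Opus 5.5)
Your proof is correct and is essentially the paper's argument: the $\orth B$ terms cancel in $Y-Y^-$, the drift difference $b\int(1/Y^- - 1/Y)\,du$ is nonnegative up to the first crossing, and the tube bound $\hat\xi\ge -X_0/4$ keeps the gap at least $X_0/4$. One small correction: $\Delta$ is not absolutely continuous in general, because $\hat\xi$ (a time-changed volatility path when $\rho>0$) need not have finite variation, as the paper remarks. Your argument uses only the continuity of $\Delta$ and the lower bound on $\hat\xi$, so nothing breaks.
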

\begin{proof}
The path $\hat\xi$ is continuous but need not have finite variation; only the bound
$\hat\xi\ge-X_0/4$ from \eqref{eq:pos-xi} is used. For $v<\zeta^-$ and before any first
crossing of $D_v:=Y_v-Y^-_v$,
\[
D_v=\frac{X_0}{2}+\hat\xi_v+b\int_0^v\Bigl(\frac1{Y^-_u}-\frac1{Y_u}\Bigr)du\ \ge\ \frac{X_0}{4}>0.
\]
By continuity no crossing occurs, so $Y\ge Y^-+X_0/4>0$ on $[0,V_T\wedge\zeta^-)$, and the
claims follow.
\end{proof}

Starting from $X_0/2$, the comparison process is first raised geometrically. By scale
invariance, the success probability of each doubling is bounded below independently of
the level.

\begin{lemma}[Doubling]\label{lem:doubling}
Let $Y^-$ solve \eqref{eq:Yminus} with $Y^-_0=y>0$. Then
\[
\Prob\Bigl(\inf_{v\le\kappa_0y^2}Y^-_v\ge\tfrac y2,\ \ Y^-_{\kappa_0y^2}\ge2y\Bigr)\ \ge\ q_1
:=\Prob\Bigl(\inf_{u\le\kappa_0}B_u\ge-\tfrac{1}{4\orth},\ B_{\kappa_0}\ge\tfrac{3}{\orth}\Bigr)>0,
\]
and $q_1$ does not depend on $y$.
\end{lemma}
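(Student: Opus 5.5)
The plan is to use the scaling invariance first, so that only $y=1$ needs to be treated, and then to compare $Y^-$ with a drifted Brownian motion on the event defining $q_1$.

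\emph{Scaling.} If $Y^-$ solves \eqref{eq:Yminus} from $y$ driven by $B$, put $\tilde Y_u:=y^{-1}Y^-_{y^2u}$ and $\tilde B_u:=y^{-1}B_{y^2u}$. Then $\tilde B$ is a standard Brownian motion and
\[
d\tilde Y_u=\orth\,d\tilde B_u-\frac{b}{\tilde Y_u}\,du,\qquad \tilde Y_0=1 .
\]
Pathwise uniqueness of \eqref{eq:Yminus} before absorption holds because the drift is locally Lipschitz on $(0,\infty)$. Hence the law of the event under study is the same as that of $\{\inf_{u\le\kappa_0}\tilde Y_u\ge\tfrac12,\ \tilde Y_{\kappa_0}\ge2\}$. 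This is why $q_1$ does not depend on $y$, and from now on $y=1$.

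\emph{Pathwise comparison.} Work on the event
\[
G:=\Bigl\{\inf_{u\le\kappa_0}B_u\ge-\tfrac1{4\orth},\ B_{\kappa_0}\ge\tfrac3{\orth}\Bigr\}.
\]
Let $\sigma:=\inf\{u:Y^-_u\le\tfrac12\}\wedge\kappa_0$. On $[0,\sigma]$ the drift satisfies $b/Y^-\le 2b$, so
\[
Y^-_u\ \ge\ 1+\orth B_u-2bu\ \ge\ 1-\tfrac14-2b\kappa_0 = \tfrac34-\tfrac18=\tfrac58,
\]
using $\kappa_0=1/(16b)$. This value stays strictly above $\tfrac12$. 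By continuity, $\sigma<\kappa_0$ is therefore impossible, so $\sigma=\kappa_0$ and $\inf_{u\le\kappa_0}Y^-\ge\tfrac58\ge\tfrac12$. In particular $Y^-$ is not absorbed on $[0,\kappa_0]$. The same estimate at the terminal time gives
\[
Y^-_{\kappa_0}\ \ge\ 1+3-2b\kappa_0 = 4-\tfrac18\ \ge\ 2 .
\]
Thus $G$ is contained in the event of the lemma, and its probability is at least $\Prob(G)=q_1$.

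\emph{Positivity of $q_1$.} The event $G$ asks a Brownian motion to stay above a negative level and end above a positive level at a fixed time. It contains a uniform tube of width $\tfrac1{8\orth}$ around the linear path from $0$ to $\tfrac4{\orth}$ on $[0,\kappa_0]$. That tube has positive probability by the Cameron--Martin theorem together with the positivity of small-ball probabilities. Alternatively, $q_1$ can be computed explicitly from the reflection-principle joint law of the running minimum and the endpoint.

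There is no real obstacle. The only point needing care is justifying the scaling identity in law, which rests on strong uniqueness up to absorption. The comparison step itself is deterministic, given the choice of $\kappa_0$.
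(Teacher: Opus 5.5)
Your proof is correct and is essentially the paper's argument: the same bootstrap on the event defining $q_1$, with drift loss at most $2b\kappa_0=\tfrac18$ (in units of $y$) while $Y^-$ stays above half its starting level, giving the bounds $\tfrac58$ and $4-\tfrac18$. The one difference is cosmetic: the paper applies Brownian scaling only to the Brownian event and runs the comparison at general $y$, so your appeal to pathwise uniqueness is unnecessary (you could equally run your deterministic comparison directly on $(\tilde Y,\tilde B)$); your explicit check that $q_1>0$ supplies a point the paper asserts without proof.
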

\begin{proof}
By Brownian scaling, the event
$\{\inf_{u\le\kappa_0y^2}B_u\ge-\tfrac{y}{4\orth},\ B_{\kappa_0y^2}\ge\tfrac{3y}{\orth}\}$
has probability $q_1$ for every $y$. On it, as long as $Y^-\ge y/2$ the drift integral is
at most $2b\kappa_0y^2/y=y/8$, so $Y^-\ge y-\tfrac y4-\tfrac y8=\tfrac{5y}8>\tfrac y2$
throughout $[0,\kappa_0y^2]$; hence there is no first passage below $y/2$, and
$Y^-_{\kappa_0y^2}\ge y+3y-\tfrac y8\ge2y$.
\end{proof}

Doublings at the natural time scale $\kappa_0y^2$ can continue only up to the level
$y_{\mathrm{crit}}:=m/\sqrt n$, because the operational-time budget $V_T$ is only of
order $m^2/n$. Above that level we force the remaining steps by a Cameron--Martin shift,
each certified by the following Gaussian estimate.

\begin{lemma}[Brownian One-Step Lower Bound]\label{lem:step}
Let $x,g,t>0$ with $x/\sqrt t\ge2$. Then
$\Prob\bigl(\inf_{u\le t}B_u\ge-x,\ B_t\ge g\bigr)\ \ge\ \tfrac13\exp\bigl(-g^2/(2t)-2g/\sqrt t\bigr)$.
\end{lemma}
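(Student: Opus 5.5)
We rescale to unit time, apply a Cameron--Martin shift that places the terminal constraint at the mean, and then localize the density on a set of explicitly bounded Gaussian probability.

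\emph{Scaling.} By Brownian scaling $u\mapsto B_{tu}/\sqrt t$, the claim reduces to $t=1$ with $x':=x/\sqrt t\ge2$ and $g':=g/\sqrt t$. The target bound then becomes $\tfrac13\exp(-g'^2/2-2g')$. Because enlarging $x'$ only enlarges the event, it suffices to treat $x'=2$.

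\emph{Cameron--Martin shift.} Let $\widetilde\Prob$ be the measure under which $W_u:=B_u-g'u$ is a standard Brownian motion on $[0,1]$. Then
\[
\frac{d\Prob}{d\widetilde\Prob}=\exp\bigl(-g'W_1-\tfrac12 g'^2\bigr).
\]
Since $g'u\ge0$, the event $\{\inf_{u\le1}B_u\ge-2,\ B_1\ge g'\}$ contains
\[
G:=\bigl\{\inf_{u\le1}W_u\ge-2,\ W_1\in[0,2]\bigr\}.
\]
On $G$ we have $W_1\le2$, so the density is at least $\exp(-2g'-g'^2/2)$. This gives
\[
\Prob(\cdot)\ \ge\ \exp\bigl(-\tfrac12g'^2-2g'\bigr)\,\widetilde\Prob(G).
\]

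\emph{Bounding $\widetilde\Prob(G)$.} It remains to show $\widetilde\Prob(G)\ge\tfrac13$, and this numerical step is the only real check. Write $\widetilde\Prob(G)$ as
\[
\widetilde\Prob\bigl(W_1\in[0,2]\bigr)-\widetilde\Prob\bigl(\inf_{u\le 1} W_u<-2,\ W_1\in[0,2]\bigr).
\]
The first term equals $\Phi(2)-\tfrac12\approx0.477$. For the second term, the reflection principle at level $-2$ maps $[0,2]$ to $[-6,-4]$, so it equals $\Phi(-4)-\Phi(-6)\le\Phi(-4)<10^{-4}$. Hence $\widetilde\Prob(G)>0.47>\tfrac13$. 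Undoing the scaling, $g'=g/\sqrt t$ gives exactly $\tfrac13\exp(-g^2/(2t)-2g/\sqrt t)$.
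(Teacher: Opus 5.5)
Your proof is correct and follows essentially the same route as the paper: a Cameron--Martin tilt by the drift $g/t$, localization of the density on $\{\widetilde B_t\le 2\sqrt t\}$ (your $W_1\le2$ after scaling), and the reflection principle at the lower barrier. The differences are minor. You intersect with the localization event and evaluate $\widetilde\Prob(G)$ by the joint reflection formula, obtaining about $0.477$. The paper instead subtracts $\widetilde\Prob(\mu\widetilde B_t>K)=\Phi(-2)$ from the bound $\tfrac12-2\Phi(-2)$, which gives $\tfrac12-3\Phi(-2)>\tfrac13$.
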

\begin{proof}
Tilt by the drift $\mu:=g/t$. The localized Cameron--Martin inequality with
$K:=2g/\sqrt t$ gives
$\Prob(A)\ge e^{-g^2/(2t)-K}\bigl(\widetilde\Prob(A)-\widetilde\Prob(\mu\widetilde B_t>K)\bigr)$.
Under the tilt $A\supseteq\{\inf_{u\le t}\widetilde B_u\ge-x,\ \widetilde B_t\ge0\}$, which
has probability at least $\tfrac12-2\Phi(-x/\sqrt t)\ge\tfrac12-2\Phi(-2)$ by the
reflection principle; and $\mu\widetilde B_t\sim N(0,g^2/t)$, so the subtracted term
equals $\Phi(-2)$. Since $\tfrac12-3\Phi(-2)>\tfrac13$, the claim follows.
\end{proof}

For the forced steps let $N_B:=\lceil\log_2(2m/y_{\mathrm{crit}})\rceil=O(\ln n)$, and
define the \emph{nominal} levels $\hat y_i:=2^iy_{\mathrm{crit}}$ for $0\le i<N_B$ and
$\hat y_{N_B}:=2m$, so that the last ratio lies in $(1,2]$. With $V_M:=V_T/4$, allocate
the operational-time increments $\Delta v_i:=\hat y_iV_M/(4m)$ to the steps
$i=0,\dots,N_B-1$; since $\sum_{i<N_B}\hat y_i\le4m$, they total at most $V_M$. Over these
steps the process climbs from $y_{\mathrm{crit}}$ to $2m$ at an exponential cost of order
$n$ only.

\begin{lemma}[Level Raising]\label{lem:raise}
There is $k_0=k_0(\delta)$ such that the following holds for $k\ge k_0$, conditionally on
a volatility path in the tube. Let $Y^-$ solve \eqref{eq:Yminus} with
$Y^-_0\ge y_{\mathrm{crit}}$, and let $v_+:=\sum_{i<N_B}\Delta v_i\le V_M$. With
probability at least $e^{-c_{10}n}$, $Y^-$ stays positive on $[0,v_+]$ and
$Y^-_{v_+}\ge2m$; along the way its total drift loss is $O(m\ln n/n)$.
\end{lemma}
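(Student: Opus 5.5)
The plan is to run $N_B$ consecutive steps. In step $i$ the driving Brownian motion is forced, via \cref{lem:step}, to carry $Y^-$ from at least $\hat y_i$ to at least $\hat y_{i+1}$ over the operational-time increment $\Delta v_i$, while staying above $\hat y_i/2$. The steps are chained by the Markov property of $B$ at the $\sigma(a)$-measurable deterministic times $v^{(i)}:=\sum_{l<i}\Delta v_l$. By the comparison principle for \eqref{eq:Yminus} (ordered solutions stay ordered), it suffices to treat the worst case $Y^-_{v^{(i)}}=\hat y_i$ at the start of each step. The inductive hypothesis is $Y^-_{v^{(i)}}\ge\hat y_i$; the first one holds since $Y^-_0\ge y_{\mathrm{crit}}=\hat y_0$.

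\emph{One step.} On the event $\{\inf_{u\le\Delta v_i}(B_{v^{(i)}+u}-B_{v^{(i)}})\ge-\hat y_i/(4\orth)\}$, and as long as $Y^-\ge\hat y_i/2$, the drift integral over the step is at most $2b\,\Delta v_i/\hat y_i=2bV_M/(4m)=bV_T/(8m)$. On the tube, $V_T\le e^{2\delta}(a^*(T))^2/(2\nu\theta)=O(m^2/n)$, since $a^*(T)=\gamma m$ and $\theta\asymp n$. So the per-step drift loss is $O(m/n)$. We need it to be at most $\hat y_i/8$ for every $i$. Because $\hat y_i\ge y_{\mathrm{crit}}=m/\sqrt n$ and $m/n\ll m/\sqrt n$, this holds for large $k$. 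The same bootstrap as in \cref{lem:doubling} then rules out a first passage below $\hat y_i/2$.

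For the endpoint, take $g_i:=(\hat y_{i+1}-\hat y_i+\hat y_i/4)/\orth\le 2\hat y_i/\orth$ (using $\hat y_{i+1}\le2\hat y_i$). With this choice, $Y^-_{v^{(i+1)}}\ge\hat y_i+\orth g_i-\hat y_i/8\ge\hat y_{i+1}$. Apply \cref{lem:step} with $x=\hat y_i/(4\orth)$, $t=\Delta v_i$ and $g=g_i$. Its hypothesis $x/\sqrt t\ge2$ needs $\hat y_i^2\ge c\,\Delta v_i=c\,\hat y_iV_T/(16m)$, i.e.\ $\hat y_i\ge c'V_T/m=O(m/n)$, which again holds since $\hat y_i\ge m/\sqrt n$. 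This is exactly why forced steps begin at $y_{\mathrm{crit}}$.

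\emph{Cost of a step.} The exponent is $g_i^2/(2\Delta v_i)+2g_i/\sqrt{\Delta v_i}\lesssim\hat y_i^2\cdot m/(\hat y_iV_M)=O(\hat y_i m/V_T)$. I need $V_T$ bounded below on the tube: $V_T\ge e^{-2\delta}V^*(T)\gtrsim m^2\gamma^2/n$. Hence each step costs $O(\hat y_i n/m)$. Summing over $i$ with $\sum_i\hat y_i\le4m$ gives a total of $O(n)$, and the prefactors $(1/3)^{N_B}=e^{-O(\ln n)}$ are negligible. Multiplying the $N_B$ conditional bounds therefore yields probability at least $e^{-c_{10}n}$. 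The total drift loss is $N_B\cdot O(m/n)=O(m\ln n/n)$, as claimed, and positivity holds throughout since $Y^-\ge\hat y_i/2>0$ during step $i$.

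\emph{Main obstacle.} The delicate point is the simultaneous requirement that the step lengths $\Delta v_i$ be short enough to fit in the budget $V_M$, yet long relative to $(\hat y_i)^2\cdot(V_T/m^2)$ so the Gaussian cost stays linear in $n$. Both rest on the two-sided tube estimate $V_T\asymp m^2/n$ and on the lower cutoff $y_{\mathrm{crit}}=m/\sqrt n$. I would check these constants first.
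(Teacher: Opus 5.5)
Your proposal is correct and follows the paper's proof essentially step for step. Like the paper, you restart the comparison at the nominal levels $\hat y_i$, certify each step by \cref{lem:step} with $x=\hat y_i/(4\orth)$ and $t=\Delta v_i$, bound the per-step drift loss by $bV_M/(2m)=O(m/n)\le\hat y_i/8$, multiply over disjoint Brownian increments, and sum the Gaussian costs using $\sum_i\hat y_i\le 4m$. The differences are cosmetic: you take the smaller target $g_i=(\hat y_{i+1}-\hat y_i+\hat y_i/4)/\orth$ in place of the paper's $3\hat y_i/(2\orth)$, and you state explicitly the tube lower bound $V_T\gtrsim m^2/n$, which the paper uses only implicitly when it writes $18m^2/(\orth^2V_M)=O(n)$.
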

\begin{proof}
Restart the comparison at the nominal levels: a solution entering step $i$ at a value at
least $\hat y_i$ dominates the solution started at $\hat y_i$, so it suffices that the
latter reach $\hat y_{i+1}$ within $\Delta v_i$. Apply \cref{lem:step} with
$x=\hat y_i/(4\orth)$, $g=3\hat y_i/(2\orth)$ and $t=\Delta v_i$. On the tube
$V_M\le e^{2\delta}\gamma^2m^2T/(8n)$, hence, uniformly in $i$,
\[
\frac{x}{\sqrt{\Delta v_i}}\ =\ \frac{\sqrt{\hat y_i\,m}}{2\orth\sqrt{V_M}}\ \ge\ \frac{\sqrt{y_{\mathrm{crit}}\,m}}{2\orth\sqrt{V_M}}\ \ge\ \frac{\sqrt2\,e^{-\delta}}{\orth\gamma\sqrt T}\;n^{1/4}\ \ge\ 2
\]
for $k\ge k_0$, so each step certificate has probability at least
\begin{equation*}
\frac13\exp\Bigl(-\frac{(3\hat y_i/(2\orth))^2}{2\Delta v_i}-\frac{3\hat y_i}{\orth\sqrt{\Delta v_i}}\Bigr).
\end{equation*}
On the certificate the bootstrap of \cref{lem:doubling} applies: the drift loss
during the step is at most
\begin{equation*}
\frac{2b\,\Delta v_i}{\hat y_i}=\frac{bV_M}{2m}=O\Bigl(\frac mn\Bigr)\le\frac{\hat y_i}{8}\qquad\text{for $k$ large,}
\end{equation*}
since $\hat y_i\ge m/\sqrt n$. Hence $Y^-\ge\hat y_i/2$ throughout the step, and at its
end
\begin{equation*}
Y^-\ \ge\ \hat y_i+\tfrac32\hat y_i-\tfrac18\hat y_i\ \ge\ 2\hat y_i\ \ge\ \hat y_{i+1}.
\end{equation*}
The
certificates concern Brownian increments over disjoint intervals, so their
probabilities multiply. The costs are
\begin{equation*}
\begin{aligned}
&\text{Gaussian:}&&\sum_i\frac{(3\hat y_i/(2\orth))^2}{2\Delta v_i}=\frac{9m}{2\orth^2V_M}\sum_i\hat y_i\le\frac{18m^2}{\orth^2V_M}=O(n),\\
&\text{localization:}&&\sum_i\frac{3\hat y_i}{\orth\sqrt{\Delta v_i}}=O\Bigl(\frac{m}{\sqrt{V_M}}\Bigr)=O(\sqrt n),\\
&\text{per-step constants:}&&N_B\ln3=O(\ln n),
\end{aligned}
\end{equation*}
and the total drift loss is at most $N_B\,bV_M/(2m)=O(m\ln n/n)$.
\end{proof}

Once the level $2m$ is reached, keeping the process above $m$ until the horizon is
nearly free.

\begin{lemma}[Holding]\label{lem:hold}
Fix a volatility path in the tube and a $\sigma(a)$-measurable operational time
$v_u\le V_T/2$, and suppose $Y^-_{v_u}\ge2m$. Let
\[
A_{\mathrm{hold}}:=\Bigl\{\inf_{0\le s\le V_T-v_u}\bigl(B(v_u+s)-B(v_u)\bigr)\ge-\frac{m}{2\orth}\Bigr\}.
\]
Then
$\Prob(A_{\mathrm{hold}}\mid\sigma(a),\mathcal F^B_{v_u})=2\Phi\bigl(m/(2\orth\sqrt{V_T-v_u})\bigr)-1$,
which tends to one uniformly over the tube and over $v_u$; and on $A_{\mathrm{hold}}$, for
$k$ large, $Y^-$ stays positive on $[v_u,V_T]$ and $Y^-_{V_T}\ge m$.
\end{lemma}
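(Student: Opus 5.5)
The statement has two parts: the exact conditional probability of $A_{\mathrm{hold}}$, and a deterministic claim about $Y^-$ on that event. I will treat them in that order.

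\emph{Probability.} By \cref{lem:cond}, conditionally on $\sigma(a)$ the process $B$ is a standard Brownian motion on $[0,V_T]$. Since $v_u$ is $\sigma(a)$-measurable, the Markov property at $v_u$ applies under the kernel $\mathsf K$. It shows that $s\mapsto B(v_u+s)-B(v_u)$, for $s\in[0,V_T-v_u]$, is a standard Brownian motion independent of $\mathcal F^B_{v_u}$. The reflection principle then gives
\[
\Prob\Bigl(\inf_{s\le S}W_s\ge-x\Bigr)=1-2\Phi(-x/\sqrt S)=2\Phi(x/\sqrt S)-1,
\]
here with $x=m/(2\orth)$ and $S=V_T-v_u$. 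For the uniform convergence I use the tube bounds from \cref{lem:tube} with $\theta=(n+c_9)/(\nu T)$: $V_T\le e^{2\delta}V^*(T)\le e^{2\delta}a^*(T)^2/(2\nu\theta)=O(m^2/n)$. Hence $m/\sqrt{V_T-v_u}\ge m/\sqrt{V_T}\ge c\sqrt n\to\infty$, uniformly over the tube and over $v_u$. The lower bound $v_u\le V_T/2$ is not even needed for this step.

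\emph{Pathwise claim.} Work on $A_{\mathrm{hold}}$ and set $\sigma:=\inf\{v\ge v_u:Y^-_v\le m/4\}$. On $[v_u,\sigma\wedge V_T]$ the drift in \eqref{eq:Yminus} satisfies $b/Y^-\le 4b/m$. The accumulated drift loss is therefore at most $4bV_T/m=O(m/n)$, which is below $m/8$ for $k$ large. So, before $\sigma$,
\[
Y^-_v\ \ge\ 2m+\orth\bigl(B(v)-B(v_u)\bigr)-\frac m8\ \ge\ 2m-\frac m2-\frac m8\ >\ m.
\]
This contradicts $Y^-_\sigma\le m/4$ if $\sigma\le V_T$. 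It follows that $\sigma>V_T$, that $Y^-$ stays positive (indeed above $m$) on $[v_u,V_T]$, and in particular that $Y^-_{V_T}\ge m$.

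The only point needing care is the bootstrap: the bound on the drift is valid only while $Y^-$ stays away from zero. Defining the stopping level at $m/4$ and then showing the path never reaches it resolves this, in the same way as in \cref{lem:doubling}. A smaller point is applying the Markov property at a time that is random but $\sigma(a)$-measurable. This is legitimate because, for fixed $y$ under the kernel $\mathsf K(y,\cdot)$, the time $v_u$ is deterministic.
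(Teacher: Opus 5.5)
Your proof is correct and follows the paper's argument essentially step for step. The probability comes from the reflection principle at the $\sigma(a)$-measurable time $v_u$, combined with the tube bound $V_T\le e^{2\delta}\gamma^2m^2T/(2(n+c_9))=O(m^2/n)$, and the pathwise claim from a bootstrap stopping argument for the drift. The only difference is cosmetic: you stop at level $m/4$ instead of the paper's $5m/4$, which changes nothing because the drift loss is $O(m/n)$ in either case.
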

\begin{proof}
The formula is the reflection principle, and it is at least
$2\Phi(m/(2\orth\sqrt{V_T}))-1$. On the tube,
\[
V_T\le e^{2\delta}\int_0^T(a^*(t))^2\,dt
=e^{2\delta}\frac{\gamma^2m^2-\alpha^2}{2\nu\theta},
\qquad
\frac{m}{2\orth\sqrt{V_T}}
\ge\frac{e^{-\delta}}{2\orth\gamma}\sqrt{\frac{2n}{T}}\,(1+o(1))\longrightarrow\infty.
\]
For the drift, bootstrap once more: let $\tau^*:=\inf\{v\ge v_u:\ Y^-_v\le\tfrac{5m}4\}$.
On $[v_u,\tau^*]$ the drift integral is at most $bV_T/(5m/4)=O(m/n)$, while on
$A_{\mathrm{hold}}$ the decrease of $\orth B$ is at most $m/2$; hence
$Y^-\ge2m-\tfrac m2-O(m/n)>\tfrac{5m}4$ up to $\tau^*$, so $\tau^*>V_T$, and the same
bounds give $Y^-_{V_T}\ge\tfrac{3m}2-o(m)\ge m$.
\end{proof}

The three phases, doubling, level raising and holding, combine to give the lower bound.

\begin{proof}[Proof of \cref{prop:pos-lower}]
Fix a volatility path in the tube. By \cref{lem:cev,lem:comparison}, it
suffices to show that, with conditional probability at least $e^{-O(n)}$, the solution
$Y^-$ of \eqref{eq:Yminus} with $Y^-_0=X_0/2$ stays positive on $[0,V_T]$ and satisfies
$Y^-_{V_T}\ge m$. We combine three phases by the Markov property at their
$\sigma(a)$-measurable endpoints.

\emph{Doubling.} Apply \cref{lem:doubling} at the dyadic levels from $X_0/2$ up to
$y_{\mathrm{crit}}$: there are $\log_2(2y_{\mathrm{crit}}/X_0)=n/\ln2+O(\ln n)$ stages,
each starting no lower than its nominal level by the ordering of solutions of
\eqref{eq:Yminus}. They succeed together with probability at least
$q_1^{\,O(n)}=e^{-O(n)}$ and use operational time at most
$2\kappa_0y_{\mathrm{crit}}^2=2\kappa_0m^2/n\le V_T/4$, by the choice of $\gamma$.

\emph{Level raising.} \cref{lem:raise} raises $Y^-$ from at least
$y_{\mathrm{crit}}$ to at least $2m$ within further operational time $V_M=V_T/4$, with
probability at least $e^{-c_{10}n}$.

\emph{Holding.} The level-raising phase ends at an operational time $v_u\le V_T/2$; by
\cref{lem:hold} the conditional probability that $Y^-$ then stays positive and ends
above $m$ tends to one.

Multiplying the three conditional bounds and integrating over the tube with
\eqref{eq:pos-tube} gives
\[
-\ln P(k)\ \le\ \frac{n^2}{2\nu^2T}+O(n),
\]
which proves the proposition.
\end{proof}

\subsection{Proof of \texorpdfstring{\cref{thm:main-tail}}{Theorem 1.1}}
The four propositions combine to give the main theorem and its corollaries.

\begin{proof}[Proof of \cref{thm:main-tail}]
For $\rho<0$ the theorem is \cref{prop:upper,prop:lower}; for
$\rho\ge0$ it is \cref{prop:pos-upper,prop:pos-lower}. The
lower-bound constructions show $P(k)>0$ for all large $k$, so \cref{lem:transfer}
applies and yields \cref{cor:wing,cor:resolution}. At $\rho=0$ this
recovers \cite{BFL}.
\end{proof}

\section{The Intrinsic Distance in the Survival Domain}\label{sec:geometry}

We now prove the geometric identification \eqref{eq:intro-c2}. Henry-Labord\`ere's
derivation of (10.22) in \cite{HL} represents the SABR generator by a Riemannian metric,
a connection and a scalar potential on a patch of the Poincar\'e half-plane, and then
uses a Gaussian estimate based on the unrestricted geodesic distance. For the absorbed
model the correct leading quantity is instead the intrinsic distance within the open
survival domain, the state space of the process killed at the boundary. The calculation
below is elementary and uses no heat-kernel theorem.

\subsection{Fermi Coordinates and the Survival Domain}
The hyperbolic geometry of SABR is most transparent in Fermi coordinates around a
geodesic, in which the survival domain becomes the coordinate half-plane $\{R>\Rb\}$,
bounded by a curve at constant distance from that geodesic. Recall
$X=F^{1-\beta}/(1-\beta)$, and set
\[
y:=\frac a\nu,\qquad x:=\frac{X-\rho y}{\orth},\qquad
R:=\operatorname{arsinh}(x/y),\qquad q:=\tfrac12\ln(x^2+y^2),
\qquad \Rb:=-\operatorname{artanh}\rho.
\]
Before absorption, $x$ and $y$ are driven by the independent Brownian motions $Z^\perp$
and $Z$ with the common volatility $a=\nu y$, so the principal symbol of the generator is
that of the hyperbolic metric $(dx^2+dy^2)/(\nu y)^2$. In the coordinates $(R,q)$,
\[
x=e^q\tanh R,\qquad y=e^q\operatorname{sech}R,
\qquad X=e^q\Psi(R),\qquad
\Psi(R):=\orth\tanh R+\rho\operatorname{sech}R,
\]
and the open survival domain $\{X>0\}$ and the metric are
\begin{equation}\label{eq:killed-fermi-geometry}
D=\{R>\Rb\},
\qquad
g=\nu^{-2}\bigl(dR^2+\cosh^2R\,dq^2\bigr).
\end{equation}
Thus $R/\nu$ is the signed distance from the geodesic $\{x=0\}$, and motion in $q$ at
fixed $R$ costs $\cosh R/\nu$ per unit of $q$.

Distances in $D$ are measured along paths that stay in $D$. For $z,z'\in D$, let
$\mathcal A_D(z,z')$ be the class of finite chains
$\gamma=(\gamma_1,\ldots,\gamma_N)$, $N\ge1$, of $C^1$ legs
$\gamma_j=(R_j,q_j):[0,1]\to D$ with $\gamma_1(0)=z$, $\gamma_N(1)=z'$ and
$\gamma_j(1)=\gamma_{j+1}(0)$. Every leg respects the domain constraint, while minimizing
sequences may approach the boundary. For $\gamma\in\mathcal A_D(z,z')$ set
\[
L_g(\gamma):=\frac1\nu\sum_{j=1}^N\int_0^1
\sqrt{\dot R_j(s)^2+\cosh^2(R_j(s))\,\dot q_j(s)^2}\,ds,
\qquad
d_D(z,z'):=\inf_{\gamma\in\mathcal A_D(z,z')}L_g(\gamma),
\]
and $d_D(z,A):=\inf_{z'\in A}d_D(z,z')$ for $A\subset D$. Let $z_0=(R_0,q_0)$ be the
initial point, so that $X_0=e^{q_0}\Psi(R_0)$ and $m=X_0e^n$, and let
\[
\mathcal T_k:=\{(R,q)\in D:e^q\Psi(R)\ge m\}
\]
be the terminal target set.

\subsection{The Distance to the Target Set}
Reaching $\mathcal T_k$ requires a displacement of about $n$ in $q$, and such motion is
cheapest where $\cosh R$ is smallest. The distance is therefore governed by the infimum
of $\cosh R$ over $D$.

\begin{proposition}[Intrinsic Distance to the Target Set]\label{prop:killed-distance}
Let $s_\rho:=\sqrt{1-(\rho\wedge0)^2}$, that is, $s_\rho=1$ for $\rho\ge0$ and
$s_\rho=\orth$ for $\rho<0$. Then, as $k\to\infty$,
\begin{equation}\label{eq:killed-distance}
d_D(z_0,\mathcal T_k)
=\frac{n}{\nu s_\rho}+O(1)
=\frac{(1-\beta)k}{\nu s_\rho}+O(1).
\end{equation}
For $\rho\le0$ the bound is approached by curves whose long tangential segment tends to
the absorbing boundary; for $\rho>0$ the minimizing tangential line $R=0$ lies in the
interior.
\end{proposition}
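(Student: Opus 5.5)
The plan is to compare the length of any admissible chain with the $q$-displacement it must achieve. The one geometric fact needed is the value of $\inf_D\cosh R$. For $\rho\ge0$ we have $\Rb=-\operatorname{artanh}\rho\le0$, so the infimum is $1$: it is attained at $R=0$ if $\rho>0$, and approached as $R\downarrow0=\Rb$ if $\rho=0$. For $\rho<0$ we have $\Rb>0$ and $\inf_D\cosh R=\cosh\Rb=(1-\rho^2)^{-1/2}=1/\orth$. In both cases $\inf_D\cosh R=1/s_\rho$. We also use that $\Psi>0$ on $D$, which is the definition of the survival domain.

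\emph{Lower bound.} By Cauchy--Schwarz, $\Psi(R)=\orth\tanh R+\rho\operatorname{sech}R\le\sqrt{\orth^2+\rho^2}\,\sqrt{\tanh^2R+\operatorname{sech}^2R}=1$. Hence every point of $\mathcal T_k$ satisfies
\[
q\ \ge\ \ln m=q_0+\ln\Psi(R_0)+n .
\]
So any chain in $\mathcal A_D(z_0,\mathcal T_k)$ has total $q$-variation at least $n+\ln\Psi(R_0)=n-O(1)$. On each leg, dropping $\dot R_j^2$ and using $\cosh R_j\ge1/s_\rho$ on $D$ gives
\[
L_g(\gamma)\ \ge\ \frac1\nu\sum_j\int_0^1\cosh(R_j)\,|\dot q_j|\,ds\ \ge\ \frac{1}{\nu s_\rho}\bigl(n-O(1)\bigr).
\]
This is the lower half of \eqref{eq:killed-distance}, uniformly over all chains.

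\emph{Upper bound.} We build an explicit three-leg chain. For $\rho>0$, take a radial leg from $z_0$ to $(0,q_0)$ at cost $|R_0|/\nu$. Then take a tangential leg along $R=0$ up to $q=\ln m-\ln\rho$, where $\Psi(0)=\rho>0$, at cost $(n+O(1))/\nu$. Two legs suffice here.

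For $\rho\le0$ the line $R=\Rb$ is excluded, and a naive choice $R=\Rb+\varepsilon$ with $\varepsilon$ fixed loses $O(\varepsilon n)$. Ending near the boundary also fails: there $\Psi(R)\approx\Psi'(\Rb)(R-\Rb)$, so $-\ln\Psi$ would add a $\ln n$ displacement. The remedy is to separate the two roles.
\begin{itemize}
\item Fix an interior level $R_1>\Rb$ with $\Psi(R_1)\ge\tfrac12\Psi_{\max}$.
\item Leg one moves radially from $z_0$ to $(\Rb+1/n,q_0)$, at cost $O(1)$.
\item Leg two moves tangentially along $R=\Rb+1/n$ from $q_0$ to $q_1:=\ln m-\ln\Psi(R_1)=q_0+n+O(1)$. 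Its cost is $\nu^{-1}\cosh(\Rb+1/n)(n+O(1))=\nu^{-1}(\cosh\Rb+O(1/n))(n+O(1))=n/(\nu s_\rho)+O(1)$.
\item Leg three moves radially from $(\Rb+1/n,q_1)$ to $(R_1,q_1)\in\mathcal T_k$, at cost $O(1)$.
\end{itemize}
All legs are $C^1$ and stay in $D$. Summing gives $d_D(z_0,\mathcal T_k)\le n/(\nu s_\rho)+O(1)$. The tangential segment tends to the boundary as $k\to\infty$, as the statement asserts for $\rho\le0$. Substituting $n=(1-\beta)k$ gives the second form of \eqref{eq:killed-distance}.

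The only delicate point is keeping the error $O(1)$ rather than $O(\ln n)$ or $o(n)$ when $\rho\le0$. It is resolved by letting the tangential leg approach the boundary at distance $1/n$, so the excess of $\cosh$ over its infimum costs only $O(1)$ in total, and by performing the final $O(1)$ radial excursion to $R_1$ only after the $q$-displacement is complete, so the small value of $\Psi$ near the boundary never enters the target condition. Everything else is a direct estimate.
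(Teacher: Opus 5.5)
Your proposal is correct and follows the paper's proof essentially step for step. The lower bound is the same, via $\Psi\le1$ and $\inf_D\cosh R=1/s_\rho$. For $\rho\le0$ you use the same radial--tangential--radial chain through $R=\Rb+1/n$; the paper takes $R_1=R_0$, so its endpoint is $(R_0,q_0+n)$. The only cosmetic difference is for $\rho>0$: you stop on the line $R=0$ at $q=\ln m-\ln\rho$, using $\Psi(0)=\rho>0$, whereas the paper returns radially to $R_0$. Both give length $n/\nu+O(1)$.
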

\begin{proof}
Since $\Psi(R)=(\orth\sinh R+\rho)/\cosh R$, one has $\Psi>0$ precisely when $R>\Rb$.
As $(\orth,\rho)$ and $(\tanh R,\operatorname{sech}R)$ are unit vectors, $0<\Psi\le1$ on
$D$. Hence every $(R,q)\in\mathcal T_k$ satisfies
\[
q-q_0\ge n+\ln\Psi(R_0)-\ln\Psi(R)\ge n+\ln\Psi(R_0).
\]
Moreover $\inf_{R>\Rb}\cosh R=1/s_\rho$: if $\rho\ge0$, then $\Rb\le0$ and the infimum is
$\cosh0=1$ (approached from within $D$ when $\rho=0$); if $\rho<0$, then $\Rb>0$ and the
infimum is $\cosh\Rb=1/\orth$. Therefore every admissible chain from $z_0$ to
$\mathcal T_k$ satisfies, for all sufficiently large $k$,
\[
L_g(\gamma)\ge \frac1{\nu s_\rho}\sum_{j=1}^N\int_0^1|\dot q_j(s)|\,ds\ge \frac{n+\ln\Psi(R_0)}{\nu s_\rho}.
\]
For the reverse bound, first let $\rho>0$. The three-segment path
\[
(R_0,q_0)\longrightarrow(0,q_0)\longrightarrow(0,q_0+n)\longrightarrow(R_0,q_0+n)
\]
stays in $D$, ends in $\mathcal T_k$, and has length $n/\nu+2|R_0|/\nu$. If $\rho\le0$,
set $R_k:=\Rb+n^{-1}$. For all sufficiently large $k$ the analogous path through $R_k$
stays in $D$, has the same endpoint $(R_0,q_0+n)\in\mathcal T_k$, and has length
\[
\frac{2(R_0-R_k)}{\nu}+\frac{n\cosh R_k}{\nu}
=\frac{n\cosh\Rb}{\nu}+O(1)
=\frac{n}{\nu s_\rho}+O(1).
\]
Returning to the fixed level $R_0$ makes the terminal constraint exact and avoids the
displacement $-\ln\Psi(R_k)$ that would arise if the endpoint stayed in the boundary
layer.
\end{proof}

\begin{figure}[t]
\centering
\includegraphics[width=\textwidth]{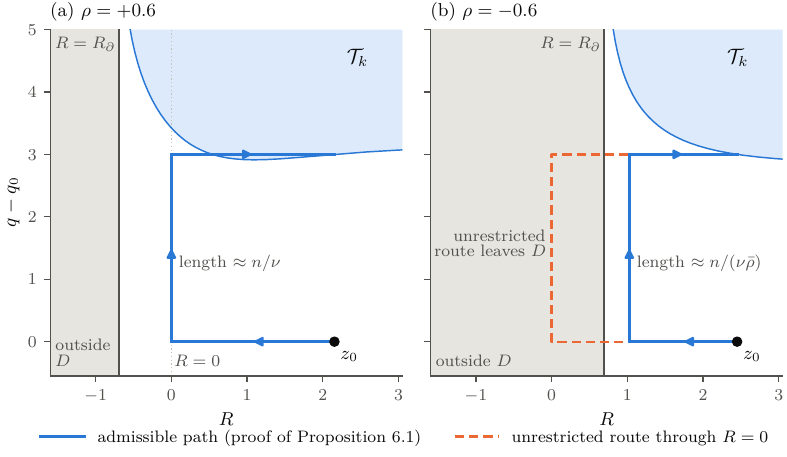}
\caption{Fermi coordinates for the reference parameters of \cref{sec:num} with
$\beta=0.5$ and $n=3$. The survival domain is $D=\{R>R_\partial\}$; the shaded strip lies
outside $D$, and $\mathcal T_k$ is the target set. Motion in $q$ costs $\cosh R/\nu$ per
unit, least on the line $R=0$. (a)~For $\rho>0$ the line $R=0$ lies in $D$, and the paths
of the proof of \cref{prop:killed-distance} have length $n/\nu+O(1)$, as in the
unrestricted metric. (b)~For $\rho<0$ the line $R=0$ lies outside $D$; admissible paths
can only approach the boundary, where the cost is $\cosh R_\partial/\nu=1/(\nu\bar\rho)$,
so the distance grows like $n/(\nu\bar\rho)$.}
\label{fig:geometry}
\end{figure}

Combined with \cref{thm:main-tail}, the proposition identifies the tail rate with
the intrinsic distance. This is the corrected form of \cref{conj:HL2} for the absorbed
model.

\begin{corollary}[Corrected \cref{conj:HL2} for Absorbed SABR]\label{cor:corrected-c2}
For every $\rho\in(-1,1)$,
\begin{equation}\label{eq:corrected-c2}
-\ln P(k)=\frac{d_D(z_0,\mathcal T_k)^2}{2T}+o(k^2).
\end{equation}
\end{corollary}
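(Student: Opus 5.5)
This follows by combining \cref{thm:main-tail} with \cref{prop:killed-distance}; no new probabilistic input is needed. The plan is to compute the squared distance divided by $2T$ from \eqref{eq:killed-distance}, compare it with the tail rate of \cref{thm:main-tail}, and check that the two agree to leading order.

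\emph{Step 1: the distance side.} By \cref{prop:killed-distance}, $d_D(z_0,\mathcal T_k)=n/(\nu s_\rho)+O(1)$ with $n=(1-\beta)k$. Squaring gives
\[
\frac{d_D(z_0,\mathcal T_k)^2}{2T}=\frac{n^2}{2\nu^2Ts_\rho^2}+O(n)=\frac{(1-\beta)^2k^2}{2\nu^2T\bigl(1-(\rho\wedge0)^2\bigr)}+O(k).
\]
The cross term $2\,(n/(\nu s_\rho))\cdot O(1)$ is $O(k)$, and the square of the $O(1)$ remainder is $O(1)$.

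\emph{Step 2: match the coefficients.} The leading coefficient in Step 1 is $c$ when $\rho\ge0$, since then $s_\rho=1$. It is $c/(1-\rho^2)=c_-$ when $\rho<0$, since then $s_\rho^2=\orth^2=1-\rho^2$. In both cases this is exactly the limit of $-\ln P(k)/k^2$ given by \cref{thm:main-tail}.

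\emph{Step 3: conclude.} Write $\Lambda_\rho$ for this common coefficient. \cref{thm:main-tail} gives $-\ln P(k)=\Lambda_\rho k^2+o(k^2)$, and Step 1 gives $d_D(z_0,\mathcal T_k)^2/(2T)=\Lambda_\rho k^2+O(k)$. Subtracting the two expansions yields \eqref{eq:corrected-c2}.

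The only point needing care is that the $O(1)$ error in \eqref{eq:killed-distance} is uniform as $k\to\infty$, which is what \cref{prop:killed-distance} asserts. Its lower bound carries the constant $\ln\Psi(R_0)$, and its upper-bound construction carries the constant $2|R_0-R_k|/\nu$ with $R_k=\Rb+n^{-1}$. These constants are fixed, so the argument contains no real obstacle: all the substance is already in \cref{thm:main-tail}.
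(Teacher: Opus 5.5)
Your proposal is correct and is essentially the paper's own proof: you square the distance asymptotics of \cref{prop:killed-distance} to get $(1-\beta)^2k^2/(2\nu^2Ts_\rho^2)+O(k)$, then match this with the rate in \cref{thm:main-tail}. The uniformity point you flag is even weaker than you need, since any $o(k)$ error in the distance would already give $o(k^2)$ after squaring.
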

\begin{proof}
By \cref{prop:killed-distance},
$d_D(z_0,\mathcal T_k)^2/(2T)=(1-\beta)^2k^2/(2\nu^2Ts_\rho^2)+o(k^2)$, which equals
$-\ln P(k)+o(k^2)$ by \cref{thm:main-tail}.
\end{proof}

\subsection{Unrestricted versus Intrinsic Distance}
The comparison with the unrestricted distance shows exactly where the Gaussian estimate
behind \cref{conj:HL2} breaks down. Let $d_{\mathrm{free}}$ denote the distance to the same set $\mathcal T_k$ when
connecting curves may range over the whole Fermi plane. The lower-bound and
three-segment arguments above give $d_{\mathrm{free}}(z_0,\mathcal T_k)=n/\nu+O(1)$ for
every $\rho$: the unrestricted plane always permits the long segment on $R=0$. If
$\rho<0$, however, $\Rb>0$ and the line $R=0$ lies outside $D$; admissible curves can
only approach $R=\Rb$, where the tangential cost is $\cosh\Rb/\nu=1/(\nu\orth)$
(\cref{fig:geometry}). If $\rho>0$ the line $R=0$ is interior, and if $\rho=0$ it
is the boundary but can be approached without changing the leading coefficient. Thus the
intrinsic and unrestricted distances agree to leading order exactly when the excluded
region is irrelevant on the $k^2$ scale. Paulot \cite{Paulot} pointed out the
corresponding boundary restriction on small-time geodesic expansions;
\cref{prop:killed-distance} is its fixed-maturity, extreme-strike analogue.

\subsection{The Potential Is Unbounded on the Survival Domain}
The endpoint value in \cref{conj:HL2} does not control the lower-order part of the
operator. In the Lamperti--CEV coordinate the full scalar potential is
\[
Q(F,a)=-\Gamma_{\beta,\rho}\left(\frac aX\right)^2,
\qquad
\Gamma_{\beta,\rho}
=\frac{\beta(1-\beta)+\beta^2/(2\orth^2)}{4(1-\beta)^2}>0,
\]
which has an inverse-square singularity at the absorbing boundary, although
$a_{\min}/X\to\nu$ along the unrestricted saddle point, so that \eqref{eq:Q} is finite.
Moreover, in Henry-Labord\`ere's convention the connection one-form is
\[
A=-\frac{b}{\orth^2X}\,dX+\frac{\rho b}{\nu\orth^2X}\,da,
\qquad
dA=-\frac{\rho b}{\nu\orth^2X^2}\,dX\wedge da,
\]
which is singular and, for $\rho\ne0$, not globally gauge-removable. These terms do not
affect \cref{cor:corrected-c2}, but the formal calculation of
\cref{sec:fiber} indicates that they contribute at order $k^{3/2}$. They are also
the obstacle to a heat-kernel proof of \cref{cor:corrected-c2}.

\begin{remark}[A Heat-Kernel Route]\label{rem:heat-kernel}
A Dirichlet heat-kernel theorem would give a second route to \eqref{eq:corrected-c2},
closer to Henry-Labord\`ere's method. Let $K_D$ be the Dirichlet heat kernel of the
generator on $D$---the transition density of the killed process---with respect to a
reference measure $\mu$, so that $P(k)=\int_{\mathcal T_k}K_D(T;z_0,z)\,\mu(dz)$. It
would suffice to prove
\[
\ln K_D(T;z_0,z)=-\frac{d_D(z_0,z)^2}{2T}+o(k^2)
\]
uniformly on a subset $\mathcal N_k\subset\mathcal T_k$ with
\begin{equation*}
\sup_{z\in\mathcal N_k}d_D(z_0,z)^2=d_D(z_0,\mathcal T_k)^2+o(k^2),\qquad
-\ln\mu(\mathcal N_k)=o(k^2),
\end{equation*}
together with an exponentially matching bound for the
integral over $\mathcal T_k\setminus\mathcal N_k$, for instance a global upper estimate
with subquadratic logarithmic volume factors. Gaussian bounds of Li--Yau type
\cite{LiYau,Davies,Sturm} do not apply directly, because the potential and the connection
are singular at $\partial D$; whether a suitable singular-potential framework applies is
open. The proofs in this paper establish both tail inequalities directly.
\end{remark}

\section{Numerical Illustrations}\label{sec:num}

The theorems concern leading $k^2$ rates, which finite-strike computations can illustrate
but not establish. We report Monte Carlo estimates of $P(k)$ at moderate strikes, which
display the correlation asymmetry, and solutions of the deterministic variational
problems behind the two rates up to $k=160$. Methods are summarized in
\cref{app:num}. All computations use $f_0=1$, $\alpha=0.3$, $\nu=0.6$, $T=1$,
$\beta\in\{0.5,0.7\}$ and $\rho\in\{0,\pm0.6\}$, for which
\[
\beta=0.5:\quad c=0.3472,\quad c_-=0.5425;\qquad
\beta=0.7:\quad c=0.1250,\quad c_-=0.1953.
\]

\begin{table}[tbp]\centering\small
\begin{tabular}{r ccc cc}
\toprule
 & \multicolumn{3}{c}{$\beta=0.5$} & \multicolumn{2}{c}{$\beta=0.7$} \\
\cmidrule(lr){2-4}\cmidrule(lr){5-6}
$k$ & $\rho=+0.6$ & $\rho=0$ & $\rho=-0.6$ & $\rho=0$ & $\rho=-0.6$ \\
\midrule
2 & $10.72\pm0.01$ & $13.67\pm0.01$ & $22.26\pm0.02$ & --- & --- \\
3 & $16.13\pm0.01$ & $20.37\pm0.01$ & $33.47\pm0.02$ & --- & --- \\
4 & $21.90\pm0.01$ & $27.34\pm0.01$ & $45.81\pm0.05$ & $22.21\pm0.02$ & $36.62\pm0.04$ \\
5 & $28.14\pm0.01$ & $34.76\pm0.01$ & $59.50\pm0.11^\dagger$ & --- & --- \\
6 & $34.84\pm0.02$ & $42.79\pm0.01$ & $74.52\pm0.24^\dagger$ & $32.07\pm0.01$ & $54.43\pm0.05$ \\
8 & --- & $60.77\pm0.02$ & --- & $42.43\pm0.02$ & $74.44\pm0.20^\dagger$ \\
10 & --- & $81.50\pm0.02$ & --- & $53.51\pm0.02$ & --- \\
12 & --- & $105.02\pm0.02$ & --- & $65.63\pm0.03$ & --- \\
\bottomrule
\end{tabular}
\caption{Monte Carlo estimates of $-\ln P(k)$ with delta-method standard errors.
$^\dagger$:~effective sample size below $100$; these errors should be read with
caution.}\label{tab:anchors}
\end{table}

At these strikes the asymptotic regime is still distant: at $\beta=0.5$, $\rho=0$ the
implied volatility is only about $0.87$ at $k=12$, against the limit $1.2$.
\cref{tab:anchors} therefore cannot be read as a rate. The correlation asymmetry is
nevertheless visible in the spreads
\[
\Delta_\rho(k):=\ln P_0(k)-\ln P_\rho(k),
\]
for which \cref{thm:main-tail} gives $\Delta_{+0.6}(k)=o(k^2)$ and
$\Delta_{-0.6}(k)=(c_--c)k^2+o(k^2)$. In \cref{fig:numerics}(a), $\Delta_{+0.6}$ is
close to linear in $k$ (its second differences at unit spacing are within $0.15$ of
zero), while $\Delta_{-0.6}$ is convex, with second differences between $0.7$ and $0.9$.
These exceed the value $2(c_--c)=0.39$ contributed by the $k^2$ term alone, consistent with
the positive $k^{3/2}$ correction of \cref{conj:subleading}; the window is too
short to separate the two, and we draw no quantitative conclusion from it.

\begin{figure}[tbp]
\centering
\includegraphics[width=\textwidth]{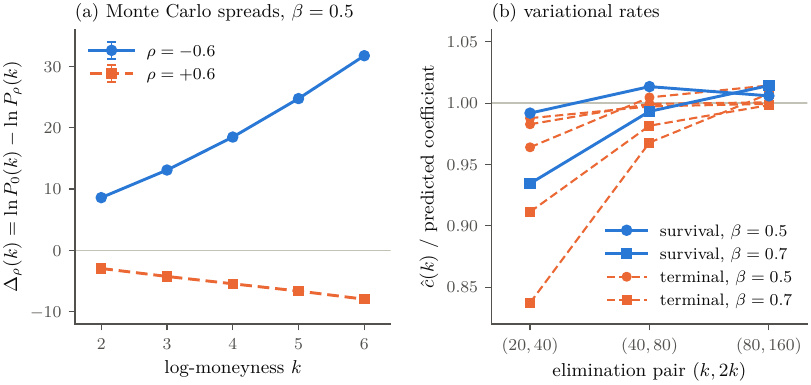}
\caption{(a)~Monte Carlo spreads $\Delta_\rho(k)=\ln P_0(k)-\ln P_\rho(k)$ at $\beta=0.5$
from \cref{tab:anchors}, with $\pm1$ standard-error bars (marginal errors combined in
quadrature; mostly smaller than the markers). The spread for $\rho=+0.6$ is close to
linear, consistent with equal $k^2$ coefficients for $\rho=+0.6$ and $\rho=0$
(\cref{thm:main-tail} gives $\Delta_{+0.6}=o(k^2)$); the spread for $\rho=-0.6$ is
convex. (b)~Eliminated coefficients $\hat c(k)$ of
\cref{tab:var}, divided by their predictions: $c_-$ for the survival problems
($\rho=-0.6$) and $c$ for the terminal problems ($\rho\in\{0,\pm0.6\}$).}
\label{fig:numerics}
\end{figure}

\paragraph{Variational Rates.}
The logarithmic rates are governed by deterministic control problems: minimize the
Cameron--Martin energy of the volatility driver plus the Gaussian energy of the
orthogonal component, subject to the terminal constraint (the \emph{terminal} problem) or
to the terminal and survival constraints (the \emph{survival} problem); see
\cref{app:num}. Their values $R(k)$, normalized by $k^2$, are predicted to
approach $c$, and $c_-$ for the survival problem with $\rho<0$. We solve them for
$k=20,40,80,160$ and remove a term linear in $k$ by the elimination
\[
\hat c(k):=\frac1k\Bigl(\frac{R(2k)}{2k}-\frac{R(k)}{k}\Bigr).
\]
At the pair $(80,160)$ every estimate lies within $1.5\%$ of its prediction
(\cref{tab:var} and \cref{fig:numerics}(b)). The individual coefficients depend
on $\beta$ through $(1-\beta)^2$; the $\beta$-independent prediction is the ratio
$c_-/c=(1-\rho^2)^{-1}=1.5625$, against which the survival-to-terminal ratios at
$(80,160)$ are $1.55$ for $\beta=0.5$ and $1.57$ for $\beta=0.7$.

\begin{table}[tbp]\centering\small
\begin{tabular}{l c r ccc l}
\toprule
problem & $\beta$ & $\rho$ & $\hat c(20)$ & $\hat c(40)$ & $\hat c(80)$ & prediction \\
\midrule
survival & 0.5 & $-0.6$ & 0.5381 & 0.5498 & 0.5458 & $c_-=0.5425$ \\
terminal & 0.5 & $-0.6$ & 0.3347 & 0.3488 & 0.3522 & $c=0.3472$ \\
terminal & 0.5 & $0$    & 0.3412 & 0.3470 & 0.3474 & $c=0.3472$ \\
terminal & 0.5 & $+0.6$ & 0.3430 & 0.3463 & 0.3470 & $c=0.3472$ \\
\addlinespace
survival & 0.7 & $-0.6$ & 0.1825 & 0.1940 & 0.1981 & $c_-=0.1953$ \\
terminal & 0.7 & $-0.6$ & 0.1047 & 0.1210 & 0.1260 & $c=0.1250$ \\
terminal & 0.7 & $0$    & 0.1139 & 0.1227 & 0.1248 & $c=0.1250$ \\
\bottomrule
\end{tabular}
\caption{Eliminated quadratic coefficients $\hat c(k)$ of the variational problems, from
the pairs $(k,2k)$ with $k=20,40,80$.}\label{tab:var}
\end{table}

\paragraph{Checks of Analytic Ingredients.}
We also tested several formulas and implementations numerically: the Gaussian
convex-set bound of \cref{lem:A2} by $10^7$-path Monte Carlo on a three-node
instance; the functional of \cref{lem:A5} in the limit $\kappa\downarrow0$ by
exhaustive minimization over $J$ at $\eta=0.1$, which gives normalized minima $0.927$,
$0.917$ and $0.926$ at $k=20,40,80$, consistent with the slack factor
$(1-\varepsilon')^2(1-\varepsilon_0)$ of that lemma, and starts with early Gaussian
displacement converge to the same minima; the block
probabilities of \cref{lem:block} at $\lambda=5,20,80$; the doubling and
level-raising certificates of \cref{sec:pos-lower} on a time grid; the expansion
\eqref{eq:bs-tail} up to $k=200$; and the Ornstein--Uhlenbeck eigenvalue computations of
\cref{sec:fiber}, including the exact value $\Theta(1)=1$. These checks are not
part of the proofs.

\section{Discussion}\label{sec:disc}

We close with two complementary perspectives on the main theorem: what lies beyond the
leading order when $\rho<0$, and how the absorbed model sits between the neighbouring
regimes $\beta=1$ and $\beta=0$.

\subsection{Beyond the Leading Order}\label{sec:fiber}
\cref{thm:main-tail} settles only the leading $k^2$ scale. For $\rho<0$ four
statements of different status must be kept apart: the square-root-boundary eigenvalue
expansion below is classical for the drift-free comparison process; the Fourier--Laplace
reduction of the full generator with Dirichlet boundary condition is exact; the ensuing
boundary-layer localization and Legendre inversion are formal; and the resulting
$k^{3/2}$ correction is conjectural.

Along the leading exponential volatility excursion, dropping the singular drift from the
survival constraint gives the moving-barrier comparison problem $B(v)\ge\ell\sqrt v$.
Under $v=e^s$, $U(s)=e^{-s/2}B(e^s)$ is a stationary Ornstein--Uhlenbeck process
constrained to stay above $\ell$, with survival exponent given by the principal Dirichlet
eigenvalue $\Theta(\ell)$ on $(\ell,\infty)$; this is the square-root-boundary theory of
Breiman \cite{Breiman} and Uchiyama \cite{Uchiyama}. Its classical large-$\ell$ expansion
is
\begin{equation}\label{eq:theta-exp}
\Theta(\ell)=\frac{\ell^2}{8}+\frac{2^{1/3}|a_1|}{4}\,\ell^{2/3}-\frac14+o(1),
\qquad |a_1|=2.33811\ldots,
\end{equation}
where $a_1$ is the first zero of the Airy function. The leading term reproduces $c_-$.
The Airy term is valid for the comparison problem, but it cannot be transferred to the
full SABR diffusion without restoring the singular drift.

The exact lower-order obstruction appears after a transform in the tangential variable.
In the coordinates $(R,q)$ of \cref{sec:geometry}, with
\[
\chi(R)=\frac{1}{\orth\cosh R\,(\orth\sinh R+\rho)},
\]
the backward generator on $D$, with Dirichlet boundary condition at $R=\Rb$, is
\[
\mathcal L
=\frac{\nu^2}{2}\left(\partial_R^2+\tanh R\,\partial_R+\operatorname{sech}^2R\,\partial_q^2\right)
-b\nu^2\chi(R)\left(\partial_R+\tanh R\,\partial_q\right).
\]
All coefficients and the boundary are invariant under translation in $q$. A real Fourier
mode yields a non-self-adjoint fibre; positive $q$-displacement leads instead to the
bilateral Laplace mode $e^{\lambda q}$, equivalently the complex Fourier parameter
$\xi=-i\lambda$. After conjugation by the radial speed measure, the analytically
continued fibre is a real one-dimensional differential expression with the intended
Friedrichs realization. For $\rho=-r<0$ and $Y:=R-\Rb$, its boundary expansion, shifted
by the quadratic maximum, is
\begin{equation}\label{eq:fiber-layer}
H_\lambda+\frac{\nu^2\orth^2\lambda^2}{2}
=\nu^2\left[-\frac12\partial_Y^2
+r\orth^2\lambda^2Y+\frac{br\lambda}{Y}
+\frac{b(b+1)}{2Y^2}
+O(\lambda^2Y^2+\lambda+Y^{-1}+1)\right].
\end{equation}
The term $br\lambda/Y$ is the singular tangential component of the drift. Balancing it
with $r\orth^2\lambda^2Y$ gives
\[
Y_\lambda=\frac{\sqrt b}{\orth\sqrt\lambda},
\qquad
r\orth^2\lambda^2Y_\lambda+\frac{br\lambda}{Y_\lambda}
=2r\orth\sqrt b\,\lambda^{3/2}.
\]
A formal one-dimensional min--max localization, with a bump of width $\lambda^{-5/8}$
about $Y_\lambda$, predicts the tilted principal-eigenvalue scaling
\begin{equation}\label{eq:fiber-eigen}
\Lambda(\lambda)
=\frac{\nu^2\orth^2}{2}\lambda^2
-2\nu^2r\orth\sqrt b\,\lambda^{3/2}
+O(\lambda^{5/4}).
\end{equation}
We do not prove \eqref{eq:fiber-eigen}: a proof must specify the quadratic-form domain,
establish the relevant discrete eigenvalue, and supply a global localization upper
bound. The inverse-square term is only $O(\lambda)$ at this scale but formally selects the
Dirichlet Friedrichs realization. At the ordinary Airy scale $Y\asymp\lambda^{-2/3}$ the
term $br\lambda/Y$ is of order $\lambda^{5/3}$ and dominates the $\lambda^{4/3}$ Airy
balance; within this formal balance the Airy regime survives only in the driftless case
$b=0$, or for the comparison process without the singular drift.

Formal Legendre inversion of \eqref{eq:fiber-eigen}, and a second, likewise formal,
calculation that conditions on the autonomous angular path and optimizes its exact
Gaussian tail, give the same prediction.

\begin{conjecture}[Singular-Drift Subleading Correction]\label{conj:subleading}
For $\beta\in(0,1)$ and $\rho\in(-1,0)$,
\[
-\ln P(k)
=c_-k^2+\kappa_{3/2}k^{3/2}+o(k^{3/2}),
\qquad
\kappa_{3/2}
=\frac{\sqrt{2\beta}(1-\beta)|\rho|}{\nu(1-\rho^2)\sqrt T}.
\]
\end{conjecture}
A proof would require a uniform shrinking-tube lower bound or, alternatively, complex
saddle-point estimates controlling endpoint overlaps and transverse frequencies. The
numerical window $k\le12$ of \cref{sec:num} cannot distinguish the $k^{3/2}$ term
reliably from other $o(k^2)$ corrections.

\subsection{Neighbouring Parameter Regimes}
The geometry above identifies absorption, rather than correlation alone, as the source of
the negative-correlation correction. The neighbouring regimes are therefore informative:
the correlation structure persists, but the state space, and with it the tail mechanism,
changes.

At $\beta=1$ there is no absorption. Forde and Zhang \cite{FZ} show that the right tail
for $\rho<0$ is governed by the critical moment $m^*=(1-\rho^2)^{-1}$ \cite{LM}, with a
one-sided discontinuity at $\rho=0^-$; the implied-volatility smile grows rather than
approaching a finite right-wing limit, consistently with Lee's moment formula
\cite{Lee}. For $0<\beta<1$, by contrast, moments of every positive order are finite as a
consequence of the quadratic log-moneyness tail proved here, and the right-wing limit is
finite; its value is $C^1$ but not $C^2$ at $\rho=0$ (\cref{cor:resolution}).
The same factor $(1-\rho^2)^{-1}$ thus appears in the two regimes through distinct
mechanisms: moment explosion at $\beta=1$, absorption at $0<\beta<1$.

The normal SABR model $\beta=0$ supplies the complementary boundary case. Its state space
is all of $\mathbb{R}$ and it has no absorbing boundary; its right-wing limit is $\nu$ for
every $\rho$ \cite[Example~10.4]{HL}. The limit $\beta\downarrow0$ in
\cref{cor:wing}, however, retains the factor $\sqrt{1-\rho^2}$, because it
approaches $\beta=0$ through models that still have an absorbing boundary at $F=0$. The
difference is caused by the change of state space, not by the CEV exponent alone.

Finally, the mass-at-zero literature \cite{GHJmass,DMHJ,Gatoms} describes how the atom
at the origin shapes the left wing. \cref{thm:main-tail} and
\cref{cor:wing} show that for negative correlation the same boundary also
changes the right-wing limit, acting through the survival constraint rather than through
the size of the atom. Taken together, these regimes show that the diffusion metric alone
does not determine the extreme wing: its interaction with the survival domain is
decisive.

\appendix
\crefalias{section}{appendix}

\section{Measurability, Versions and Uniqueness}\label{app:meas}
This appendix records the measurability and well-posedness facts used when conditioning
on a volatility path, passing to operational time, and summing level-hitting
configurations; it introduces no additional probabilistic estimate.

(i) On every compact subset of $(0,\infty)\times(0,\infty)$ the coefficients of the
$(F,a)$ equation are locally Lipschitz. Standard localization therefore gives a
pathwise-unique strong solution up to $\zeta$, and the model convention fixes its
continuation by $F_t=0$ after $\zeta$. We use continuous adapted versions throughout. The
stopped It\^o identities at levels $X\ge r$, $r\in\mathbb Q_{>0}$, hold simultaneously
outside one null set; continuity then gives the all-times formulation of
\cref{lem:cev}.

(ii) The volatility path space $C([0,T])$ is Polish. For a fixed continuous positive path
$y$, the It\^o integral $M(y)=\int_0^\cdot y_s\,dZ_s^\perp$ with deterministic integrand
is a centred continuous Gaussian process with covariance $V_{s\wedge t}(y)$. Since $a$ is
$\sigma(Z)$-measurable, independence of $Z^\perp$ and $Z$ implies independence of
$Z^\perp$ and $a$, and the deterministic-integrand stochastic-integral theorem gives, for
$\Prob_a$-almost every $y$, the conditional path law
\[
\mathcal L(M\mid a=y)=\mathsf W\circ\Theta_y^{-1},
\qquad \Theta_y(w)(t)=w(V_t(y)),
\]
as stated in \cref{lem:cond} \cite[Ch.~3]{KS}. The map $(y,w)\mapsto w\circ V(y)$ is
Borel measurable for the compact-open topologies, so
$\mathsf K(y,A):=\mathsf W(\Theta_y^{-1}(A))$ is a measurable probability kernel. Regular
conditional distributions are unique only $\Prob_a$-almost everywhere, and every
conditional assertion in the paper is made with that qualification.

(iii) Since $y>0$ for $\Prob_a$-almost every path, $V(y)$ is $C^1$ and strictly
increasing; its inverse $t_y:[0,V_T(y)]\to[0,T]$ is $C^1$, and under the kernel above
$(M_{t_y(v)})_{0\le v\le V_T(y)}$ has Wiener law on the finite random interval
$[0,V_T(y)]$. In particular, the comparison statements of \cref{sec:pos-lower} are
confined to this operational-time horizon.

(iv) The level-hitting times $\tau_j$ are stopping times of the continuous adapted
process $a$, and all skeleton data ($v_j$, $h_j$, $H$ and bin memberships) are
$\sigma(a)$-measurable. After coincident operational-time nodes are merged, the QP in
\eqref{eq:QP} is the minimum of a finite-dimensional coercive convex functional over a
closed polyhedron and is a Borel function of its operational-time and threshold data.
The conditional applications of \cref{lem:A2} and the tower property are therefore
legitimate.

\section{Numerical Methods}\label{app:num}

This appendix summarizes the methods behind \cref{sec:num}.

\emph{Monte Carlo estimators.} We write $P(k)=\E[q(a)]$, where $q(a)$ is the conditional
probability of the target event given the volatility path. Volatility paths are sampled
from a finite mixture of exponentially tilted path measures and reweighted by the
mixture likelihood ratio. Except for $\beta=\tfrac12$, $\rho=0$, $q(a)$ is estimated by
an Euler scheme in
operational time with $128$ antithetic inner draws and the drift-free Brownian-bridge
correction
\[
p_{\mathrm{hit}}=\exp\!\Bigl(-\frac{2X_sX_{s+1}}{\orth^2\Delta v}\Bigr)
\]
when both endpoints of a step are positive. The inner Brownian increments are tilted
toward the terminal target, by a constant tilt or by an equal mixture of a constant and a
variationally scheduled tilt, on grids of $500$ to $600$ steps. For $\beta=\tfrac12$,
$\rho=0$ the inner probability is exact: conditionally on the volatility path, $4F$ is a
squared Bessel process of dimension $0$ in operational time, absorbed at $0$, so that
with $u:=2f_0/V_T$ and $w:=2K/V_T$,
\[
q(a)=\sum_{j\ge1}e^{-u}\frac{u^j}{j!}\,G(j,w),
\]
where $G$ is the regularized upper incomplete gamma function. The series is truncated
adaptively, with analytic bounds on the omitted Poisson tail that are negligible relative
to the estimate and its Monte Carlo error; independent Bessel-density quadrature agrees
to relative error $10^{-15}$, and the remaining discretization is the trapezoidal
evaluation of $V_T$ on a $2000$-step grid.

The reported errors are first-order delta-method standard errors $\mathrm{SE}(\hat P)/\hat P$
of $-\ln\hat P$; the second-order correction $\tfrac12(\mathrm{SE}/\hat P)^2$ is at most
$0.03$. Effective sample sizes are $(\sum_iW_i)^2/\sum_iW_i^2$ for importance weights
$W_i$; the threshold $100$ for $^\dagger$ was chosen after the runs. The cell
$(\beta,\rho,k)=(\tfrac12,-0.6,6)$ pools two batches of $36\,000$ paths. Grid-refinement
ladders with $60$, $125$, $250$ and $500$ steps at the representative cells
$(\tfrac12,+0.6,2)$, $(\tfrac12,-0.6,4)$ and $(0.7,-0.6,4)$ bound the production-grid bias
in $-\ln P$ by $0.003$, $0.013$ and $0.024$ (two standard errors). They do not bound the
bias of the other cells; the sources of discretization error are the trapezoidal
integrated variance, the Euler treatment of the singular drift, and the drift-free bridge
correction on steps with drift.

\emph{Variational problems.} With the volatility driven by a control $\varphi$,
$a(t)=\alpha e^{\nu\varphi(t)}$, operational time $V(t)=\int_0^ta(s)^2ds$ and correlation
term $\xi(t)=(\rho/\nu)(a(t)-\alpha)$, the survival problem is
\[
R(k)=\min_{\varphi(0)=0}\Bigl\{\frac12\int_0^T\dot\varphi(t)^2\,dt
+\min_{\psi}\frac1{2\orth^2}\int_0^{V(T)}\dot\psi(v)^2\,dv\Bigr\},
\]
where the inner minimum is over paths $\psi$ with $\psi(0)=0$,
$\psi(V(T))\ge m-X_0-\xi(T)$, and $\psi(V(t))\ge-X_0-\xi(t)$ for all $t\le T$; the
terminal problem omits the last constraint. (The It\^o correction in $\ln a$ is of lower
order and is omitted.) We take $\varphi$ piecewise linear on $20$ control nodes,
interpolated on a $400$-step grid with exact operational-time increments on each step.
With the survival constraint imposed at the grid nodes, the inner problem is solved
exactly: its minimizer is the least concave majorant, a taut string, of the constraint
points. The outer minimization uses L-BFGS-B followed by Nelder--Mead from several
initializations, with continuation in $k$; the computation is deterministic. An earlier
implementation, which imposed the inner constraints by penalties, gives consistent but
less regular elimination sequences; both are included in the supplementary material.

\emph{Reproducibility.} The supplementary material contains the scripts, the stored
outputs and a cell-by-cell provenance record. Not every displayed Monte Carlo value can
be regenerated from a recorded seed; the record states which can. For $\beta=\tfrac12$,
$\rho=+0.6$, \cref{tab:anchors} reports a run with recorded seeds; an earlier
production run agrees with it to within $0.032$ in $-\ln P$.

\section*{Supplementary Material}
The supplementary material, available from the authors on request, contains the code
and data for \cref{sec:num} and the figures, and a Lean development. The Lean development formalizes the
fixed-volatility Black--Scholes call-price rate, the identification of the
implied-volatility limit from a call-price rate, and the intrinsic-distance calculation
of \cref{prop:killed-distance}, and derives \cref{cor:corrected-c2}
from that proposition with the two tail bounds as hypotheses. It states the stochastic
inputs of the main proof as explicit interfaces and is not a formal proof of
\cref{thm:main-tail}. A claim manifest links statements of this paper to Lean
declarations.

\section*{Acknowledgments and Tool Disclosure}
The authors used OpenAI Codex and Anthropic's Claude for language editing, restructuring
of the exposition, preparation of figures, and assistance with the supplementary Lean
development and computational checks. The authors take full responsibility for the
mathematical claims, exposition, code and numerical results.

\printbibliography[heading=bibintoc]

\end{document}